\theoremstyle{theorem}
\newtheorem*{thma}{Theorem A}
\newtheorem*{thmb}{Theorem B}
\newtheorem*{thmc}{Theorem C}
\newtheorem*{thmd}{Theorem D}
\newtheorem{Lemma}{Lemma}
\numberwithin{Lemma}{subsection}
\numberwithin{equation}{subsection}
\newtheorem{Definition}[Lemma]{Definition}
\newtheorem{Theorem}[Lemma]{Theorem}
\newtheorem{Proposition}[Lemma]{Proposition}
\newtheorem{Corollary}[Lemma]{Corollary}
\newtheorem{Convention}[Lemma]{Convention}
\theoremstyle{definition}
\newtheorem{Remark}[Lemma]{Remark}
\newtheorem{Example}[Lemma]{Example}
\newcommand{\disp}{\mathcal{D}\text{isp}}
\newcommand{\Z}{\mathbb{Z}}
\newcommand{\Q}{\mathbb{Q}}
\newcommand{\F}{\mathbb{F}}
\DeclareMathOperator{\supp}{Supp}
\DeclareMathOperator{\codim}{codim}
\DeclareMathOperator{\fzip}{F-zip}
\DeclareMathOperator{\Lie}{Lie}
\DeclareMathOperator{\Ker}{Ker}
\DeclareMathOperator{\Pic}{Pic}
\DeclareMathOperator{\Aut}{Aut}
\DeclareMathOperator{\Hom}{Hom}
\DeclareMathOperator{\Sym}{Sym}
\DeclareMathOperator{\GL}{GL}
\DeclareMathOperator{\Spec}{Spec}
\newcounter{listcounter}
\newskip{\itemsepamount}
\newskip{\topsepamount}
\newenvironment{equivlist}{%
  \begin{list}
    {\upshape (\roman{listcounter})}
    {\setlength{\leftmargin}{23pt}
     \setlength{\rightmargin}{0pt}
     \setlength{\itemindent}{0pt}
     \setlength{\labelsep}{5pt}
     \setlength{\labelwidth}{18pt}
     \setlength{\listparindent}{\parindent}
     \setlength{\parsep}{0pt}
     \setlength{\itemsep}{\itemsepamount}
     \setlength{\topsep}{\topsepamount}
     \usecounter{listcounter}}}
  {\end{list}}
\begin{document}

\title{On the Chow Ring of the Stack of truncated Barsotti-Tate Groups}
  \author{Dennis Brokemper}
  \date{November 2016}
  \maketitle
  
\begin{abstract}
We determine the Chow ring of the stack of truncated displays and more generally the Chow ring of the stack of $G$-zips.
We also investigate the pull-back morphism of the truncated display functor. From this we can determine
the Chow ring of the stack of truncated Barsotti-Tate groups over a field of characteristic $p$ up
to $p$-torsion.
\end{abstract}

\section*{Introduction}
\addcontentsline{toc}{section}{Introduction}
Edidin and Graham (\cite{EG2}) develop an equivariant intersection theory for actions of linear algebraic groups $G$ on algebraic spaces $X$.
For such $G$-spaces they define $G$-equivariant Chow groups $A^G_*(X)$ generalizing Totaros' defintion of the $G$-equivariant Chow ring of a point in \cite{To}. 
They are an invariant of the corresponding quotient stack $[X/G]$, i.e. they are independent of the choice of a presentation. Hence they can be used 
to define the integral Chow group of a quotient stack. 
If $X$ is smooth these groups carry a ring structure making them into commutative graded rings.
Edidin and Graham used their theory to compute the Chow ring
of the stacks $\mathscr{M}_{1,1}$ and $\bar{\mathscr{M}}_{1,1}$ of elliptic curves. In an Appendix to that paper Vistoli computed the Chow ring of 
$\mathscr{M}_2$. Edidin and Fulghesu (\cite{EF}) computed the integral Chow ring of the stack of hyperelliptic curves of even genus. 
In this article we investigate the Chow ring of the stack of truncated Barsotti-Tate groups over a field of characteristic $p>0$. 

Let us denote the stack of level-n Barsotti-Tate groups by $BT_n$.
A level-n BT group has a height and a dimension, which are locally constant functions on the base.
If $BT_n^{h,d}$ denotes the stack of level-n BT groups of constant height $h$ and dimension $d$ we obtain a decomposition
$BT_n = \coprod_{0 \leq d \leq h} BT_n^{h,d}$ into open and closed substacks. For example, if $A$ is an abelian scheme of relative dimension $g$
then its $p^n$-torsion subscheme $A[p^n]$ is a level-n BT group of height $2g$ and dimension $g$. 

Although $BT_n^{h,d}$ has a natural presentation $[X/\GL_{p^{nh}}]$ as a quotient stack
with quasi-affine and smooth $X$ (cf. \cite{We}), it seems unlikely that this presentation can be used directly to compute the Chow ring. 
Instead we relate the stack of truncated Barsotti-Tate groups to a stack whose Chow ring is easier to compute, but still closely related to the Chow ring of $BT_n$. 

Our choice for this stack is the stack $\disp_n$ of truncated displays introduced in \cite{La}. Displays were
first introduced in \cite{Zi} to provide a Dieudonne theory that is valid not only over perfect fields but more generally over $\mathbb{F}_p$-algebras or $p$-adic rings. 
While display are given by an invertible matrix with entries in the ring of Witt vectors $W(R)$, if a basis of the underlying modules is fixed,
a truncated display is given by an invertible matrix over the truncated Witt ring $W_n(R)$. 

Using crystalline Dieudonne theory one can associate to every p-divisible group a display. This induces a morphism $\phi \colon BT \to \disp$ from 
the stack of Barsotti-Tate groups to the stack of displays, which in turn induces a morphism 
$$\phi_n \colon BT_n \to \disp_n.$$ compatible with the truncations on both sides.
By \cite{La} this morphism is a smooth morphism of smooth algebraic stacks over $k$ and an equivalence on geometric points.
\begin{thma}
 The pull-back $\phi_n^* \colon A^*(\disp_n) \to A^*(BT_n)$ is injective and an isomorphism after inverting $p$.
\end{thma}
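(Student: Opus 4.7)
The plan is to exploit the two key properties of $\phi_n$ quoted from \cite{La}: smoothness of a morphism of smooth algebraic stacks, and being an equivalence on geometric points. Together they suggest that the local structure of $\phi_n$ is controlled by finite group schemes of $p$-power order, so that pullback on Chow rings is an isomorphism after inverting $p$, while integral injectivity requires an additional transfer argument.

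First, I would analyze the local structure of $\phi_n$. Using a presentation $BT_n^{h,d} \cong [X/\GL_{p^{nh}}]$ (with $X$ quasi-affine and smooth, as recalled from \cite{We}) and a compatible presentation of $\disp_n$ coming from Zink--Lau's theory, $\phi_n$ is induced by an equivariant morphism of schemes. The smoothness and equivalence-on-geometric-points hypotheses, together with the matching deformation theories (Grothendieck--Messing on the $BT_n$ side, Zink on the display side), should pin down this morphism \'etale-locally as a torsor or gerbe for a finite flat commutative group scheme $H$ of $p$-power order, accounting for the infinitesimal discrepancies between a truncated BT and its truncated display.

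Second, the Chow-theoretic computation reduces to this local picture. Pullback along an $H$-torsor or $H$-gerbe with $H$ of order $p^r$ differs from the identity only by $p$-power torsion, since the Chow rings of the classifying stacks of $\mu_{p^r}$, $\alpha_{p^r}$ and $\Z/p^r$ are $p$-power torsion in positive degrees. Assembling this \'etale-locally over a suitable stratification of $\disp_n$, together with the excision exact sequences for open/closed pairs, will show that $\phi_n^* \otimes \Z[1/p]$ is an isomorphism. For integral injectivity, I would construct a transfer map $(\phi_n)_*$ with $(\phi_n)_* \circ \phi_n^* = p^r$ and combine it with the explicit computation of $A^*(\disp_n)$ carried out elsewhere in the paper to rule out $p$-power torsion in the kernel of $\phi_n^*$.

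The main obstacle is the first step: extracting a concrete finite commutative $p$-power order group scheme $H$ governing the local structure of $\phi_n$. While smoothness and groupoid-level bijectivity are clean statements, a Chow-theoretic argument needs uniform control on the fibers, which requires comparing the deformation functors of $BT_n$ and $\disp_n$ more finely than simply matching their reduced geometric points. A secondary difficulty is constructing pushforward along $\phi_n$ despite its lack of properness; this is most naturally handled by descending to the equivariant Chow theory of presentations in the sense of Edidin--Graham.
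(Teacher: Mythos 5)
Your overall strategy (control the fibers of $\phi_n$ by finite commutative $p$-power group schemes, invert $p$, then use knowledge of $A^*(\disp_n)$ for integral injectivity) is the right shape, but two of your steps have genuine gaps. First, the local-structure step: you do not need deformation theory, nor any uniform \'etale-local description of $\phi_n$ as an $H$-torsor or $H$-gerbe --- and in fact ``\'etale-locally'' cannot work, because the relevant gerbes are banded by \emph{infinitesimal} group schemes and only become neutral after a finite \emph{purely inseparable} extension of $p$-power degree. The paper's input is pointwise and comes directly from \cite[Theorem 4.7]{La}: the diagonal of $\phi_n$ is flat and surjective, so the fiber over any field-valued point of $\disp_n$ is a gerbe, and after a finite extension $L'/L$ of $p$-power degree it is $B\underline{\Aut}^o(G)$ with $\underline{\Aut}^o(G)$ commutative and infinitesimal. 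No uniformity over the base is required, because the globalization is done fiberwise.

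Second, your assembly step is where the real technical content lies and your proposal omits the key device: ordinary excision sequences for open/closed pairs are only right exact, so they can give surjectivity statements but not the injectivity of $\phi_n^*\otimes\Z[1/p]$. The paper works with Bloch's \emph{higher} Chow groups, chooses admissible quasi-projective approximations of both stacks (Lemma \ref{LeAdm}, Definition \ref{DefAdm}) so that the long localization sequence applies, and then runs a Quillen-style limit/noetherian induction (Lemma \ref{LeFiber}), reducing to the fibers over all points $y$, where Lemma \ref{LeCov} handles the finite flat covers of $p$-power degree (both the field extension $L'/L$ and the quotient by the infinitesimal group). Without higher Chow groups or an equivalent substitute, your ``stratify and excise'' plan does not close. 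Finally, the transfer map $(\phi_n)_*$ with $(\phi_n)_*\circ\phi_n^*=p^r$ is both unnecessary and doubtful: $\phi_n$ is not representable, its fibers are gerbes of stacky degree $p^{-r}$ rather than finite covers of degree $p^r$, so the asserted projection formula is not available. It is also not needed: once $\phi_n^*$ is an isomorphism after inverting $p$, any kernel element is killed by a power of $p$, and $A^*(\disp_n)$ is $p$-torsion free by Theorems \ref{Thhigherdisp} and \ref{ThSt}, which already gives integral injectivity --- this is exactly how the paper concludes.
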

Let us sketch the proof. Consider a field $L$ and a morphism $\Spec L \to BT_n$. After base change to a finite field extension of $p$-power degree the fiber
$\phi_n^{-1}(\Spec L)$ is equal to the classifying space of an infinitesimal group scheme necessarily of $p$-power degree. 
It follows that the pull-back map of Bloch's higher Chow groups $A_*(\Spec L,m) \to A_*(\phi_n^{-1}(\Spec L),m)$ becomes an isomorphism after inverting $p$.
Using the long localization exact sequence the theorem follows from a limit argument and noetherian induction similar to that in \cite[Proposition 4.1]{Qu}. 
The injectivity assertion follows since $A^*(\disp_n)$ is $p$-torsion free.

Thus to compute the Chow ring of $BT_n$ at least up to $p$-torsion it suffices to compute the Chow ring of $\disp_n$, which is much easier
due to the simpler presentation as a quotient stack. More precisely, if $\disp_n^{h,d}$ denotes the open and closed substack in $\disp_n$ of truncated displays
with constant dimension $d$ and height $h$ we have
$$
\disp_n^{h,d}=[\GL_h(W_n(\cdot))/G_n^{h,d}],
$$
where $W_n$ refers to the ring of truncated Witt vectors and $G_n^{h,d}$ is an extension of $\GL_d \times \GL_{h-d}$ by a unipotent group. 
The following result reduces the calculation of $A^*(\disp_n)$ to the case $n=1$.
\begin{thmb}
 The pull-back $\tau_n^* \colon A^*(\disp_1) \to A^*(\disp_n)$ of the truncation map $\tau_n \colon \disp_n \to \disp_1$ is an isomorphism.
\end{thmb}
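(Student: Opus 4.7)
Since the Chow ring splits over the components $\disp_n^{h,d}$ and the truncation respects the $(h,d)$-decomposition, it suffices to fix $(h,d)$ and prove that the one-step truncation $\tau\colon \disp_n^{h,d}\to \disp_{n-1}^{h,d}$ induces an isomorphism on Chow rings; iterating then gives the theorem. Using the given presentation, $\tau$ is represented by the surjection $\GL_h(W_n)\twoheadrightarrow \GL_h(W_{n-1})$, equivariant with respect to $G_n^{h,d}\twoheadrightarrow G_{n-1}^{h,d}$. Let $K_X$ and $K_G$ denote the respective kernels. A direct computation identifies $K_X=1+V^{n-1}\Mat_h(W_1)$ with the vector group $\G_a^{h^2}$, since $V^{n-1}W_1$ is a square-zero ideal in $W_n$ (one has $V^{n-1}(1)=p^{n-1}$ and $p^{2n-2}=0$ in $W_n$ for $n\geq 2$). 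An analogous analysis of the explicit unipotent part of $G_n^{h,d}$ shows that $K_G$ is likewise a vector group $\G_a^{N}$.

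The key idea is to factor $\tau$ as
$$
[\GL_h(W_n)/G_n^{h,d}]\ \xrightarrow{\ \alpha\ }\ [\GL_h(W_{n-1})/G_n^{h,d}]\ \xrightarrow{\ \beta\ }\ [\GL_h(W_{n-1})/G_{n-1}^{h,d}],
$$
where in the middle stack $G_n^{h,d}$ acts on $\GL_h(W_{n-1})$ through its quotient. The morphism $\alpha$ is straightforward: since $\GL_h(W_{n-1})$ is affine and $K_X\cong\G_a^{h^2}$, one has $H^1_{\text{Zar}}(\GL_h(W_{n-1}),K_X)=0$, so the $K_X$-torsor $\GL_h(W_n)\to\GL_h(W_{n-1})$ is trivial and hence a $G_n^{h,d}$-equivariant affine bundle. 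Thus $\alpha$ is an affine bundle of stacks and $\alpha^*$ is an isomorphism by homotopy invariance of equivariant Chow groups.

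The main obstacle is showing that $\beta^*$ is an isomorphism: $\beta$ is a gerbe banded by $K_G$, and one needs to exploit that for a vector group acting trivially such a gerbe is Chow-invisible. The cleanest approach is via Edidin--Graham's equivariant approximation: choose a representation $V$ of $G_{n-1}^{h,d}$ with free open locus $U\subset V$ whose complement has arbitrarily high codimension, and a faithful $G_n^{h,d}$-representation $W$ on which $K_G$ acts linearly with free open locus $W^\circ\subset W$ whose complement has arbitrarily high codimension. Then $G_n^{h,d}$ acts freely on $\GL_h(W_{n-1})\times U\times W^\circ$ (with $U$ pulled back along $G_n^{h,d}\twoheadrightarrow G_{n-1}^{h,d}$), and the natural projection
$$
(\GL_h(W_{n-1})\times U\times W^\circ)/G_n^{h,d}\ \longrightarrow\ (\GL_h(W_{n-1})\times U)/G_{n-1}^{h,d}
$$
is an affine bundle with fiber $W^\circ/K_G$, which is open in the affine space $W/K_G$ because $K_G$ is a vector group acting linearly. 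Homotopy invariance then yields that $\beta^*$ is an isomorphism, so $\tau^*=\alpha^*\beta^*$ is an isomorphism as required. The delicate part of this last step is the construction of $W$ with the required properties (linear $K_G$-action and arbitrarily deep free locus), which is a standard but non-trivial task in equivariant intersection theory and is where the bulk of the work of the argument concentrates.
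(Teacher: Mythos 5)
Your overall strategy is the paper's: you factor the truncation through the intermediate stack in which the larger group acts through its quotient (the paper factors $[X_n^{h,d}/G_n^{h,d}]\to[X_1^{h,d}/G_n^{h,d}]\to[X_1^{h,d}/G_1^{h,d}]$ in one shot rather than one level at a time, but this is immaterial), handles the first map as a torsor under the kernel of $\GL_h(W_n(\cdot))\to\GL_h(W_{n-1}(\cdot))$, and handles the second map using the unipotence of the kernel of $G_n^{h,d}\to G_{n-1}^{h,d}$ (cf.\ Lemma \ref{LeG_n^{h,d}} and Corollary \ref{CorG_n^{h,d}}). However, both bundle steps as you justify them contain genuine gaps. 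For the map $\beta$, your key assertion that the fiber $W^\circ/K_G$ ``is open in the affine space $W/K_G$ because $K_G$ is a vector group acting linearly'' is unjustified and false in general: quotients of linear representations by unipotent (even vector) groups need not be open subschemes of affine space, and the invariant ring defining $W/K_G$ can be badly behaved (Nagata's counterexample is exactly a linear action of a vector group). What is true is only that $W^\circ\to W^\circ/K_G$ is a Zariski-locally trivial torsor under the special group $K_G$, so the fiber is \emph{Chow-trivial in the relevant range}, not an affine space; but then ``homotopy invariance'' does not apply, and one needs a fiberwise criterion proved via the localization sequence for higher Chow groups and noetherian induction --- this is precisely the paper's Lemma \ref{LeFiber} (and Corollary \ref{CorAffineFibers}), packaged into Proposition \ref{PropExt1} and Corollary \ref{CorExt1}. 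Moreover, to know that your map $(\GL_h(W_{n-1})\times U\times W^\circ)/G_n^{h,d}\to(\GL_h(W_{n-1})\times U)/G_{n-1}^{h,d}$ is Zariski-locally trivial (so that its fibers over all, possibly non-closed, points really are $W^\circ/K_G$ and not twisted forms), you must use that $G_{n-1}^{h,d}$ is special; you never establish or invoke this, whereas the paper proves it in Corollary \ref{CorG_n^{h,d}} and uses it at exactly this point. Without speciality one only gets statements after tensoring with $\Q$ (cf.\ Proposition \ref{PropExtrat}), which is weaker than the integral isomorphism claimed.

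The same issue occurs, more mildly, in your step $\alpha$: triviality of the $K_X$-torsor $\GL_h(W_n(\cdot))\to\GL_h(W_{n-1}(\cdot))$ over the affine base gives a product decomposition, but not a $G_n^{h,d}$-equivariant vector bundle structure, so ``homotopy invariance of equivariant Chow groups'' does not apply verbatim. The paper's fix is again speciality of $G_n^{h,d}$: after passing to a quasi-projective mixed space, $(X\times K)/G\to X/G$ is a Zariski-locally trivial bundle with affine-space fibers, and one concludes by Corollary \ref{CorAffineFibers} (Quillen-style argument), not by equivariant homotopy invariance. Finally, you locate the ``delicate part'' in constructing the representation $W$ with a deep free locus; this is in fact the standard Edidin--Graham construction (embed $G_n^{h,d}$ in some $\GL_m$ and take spaces of injective matrices), and the real work lies instead in the speciality statements and the fiberwise Chow-triviality lemma described above. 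With those ingredients supplied --- exactly the paper's Corollaries \ref{CorG_n^{h,d}}, \ref{CorExt1} and \ref{CorAffineFibers} --- your outline becomes the paper's proof of Theorem \ref{Thhigherdisp}.
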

This is proved using the factorization 
$$[\GL_h(W_n(\cdot))/G_n^{h,d}] \to [GL_h/G_n^{h,d}] \to [GL_h/G_1^{h,d}]$$
of $\tau_n$ and the fact that the first map is an affine bundle and that $G_n^{h,d}$ is an extension of $G_1^{h,d}$ by a unipotent group. 

In a similar way one shows that the Chow ring of $\disp^{h,d}_1$ coincides with that of the quotienstack
$$
[\GL_h/(\GL_d \times \GL_{h-d})],
$$
where the action is given by conjugation with the Frobenius. This situation is a special case of Proposition \ref{PropChowConj}.

\begin{thmc}
\label{ThdisC}
The following equation holds
 \begin{align*}
A^*(\disp^{h,d}_1) & =A^*_{\GL_d \times \GL_{h-d}}(\GL_h) \\
                   & = \mathbb{Z}[t_1,\ldots,t_h]^{S_d \times S_{h-d}}/((p-1)c_1,\ldots,(p^h-1)c_h), 
\end{align*}
where $c_1,\ldots,c_h$ are the elementary symmetric polynomials in the variables $t_1,\ldots,t_h$.  
\end{thmc}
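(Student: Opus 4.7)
\textit{Plan.} The first equality is a special case of Proposition~\ref{PropChowConj}: combined with the presentation $\disp^{h,d}_1 = [\GL_h / G_1^{h,d}]$ and the fact that $G_1^{h,d}$ is a unipotent extension of $G := \GL_d \times \GL_{h-d}$ (so passage to $G$ preserves Chow rings), it reduces the problem to computing $A^*_G(\GL_h)$ with respect to the Frobenius-twisted conjugation $g \cdot A = g A F(g)^{-1}$, where $g$ is embedded block-diagonally in $\GL_h$ and $F$ denotes the Frobenius.

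The key structural observation is that an $S$-point of $[\GL_h / G]$ amounts to a $G$-torsor $P$ on $S$ together with an isomorphism $\varphi \colon P \times^G \GL_h \iso (P \times^G \GL_h)^{(F)}$ of the associated $\GL_h$-torsor with its Frobenius twist. Consequently $[\GL_h/G]$ is a $2$-fiber product
$$[\GL_h/G] \cong BG \times_{B\GL_h \times B\GL_h,\, (\iota,\, F \circ \iota),\, \Delta} B\GL_h,$$
where $\iota \colon BG \to B\GL_h$ is induced by the inclusion, $F$ is the Frobenius endomorphism of $B\GL_h$, and $\Delta$ is the diagonal. Since pullback along the Frobenius of $B\GL_h$ acts as multiplication by $p^i$ on $A^i(B\GL_h) = \Z[c_1, \ldots, c_h]$, the two composite morphisms $BG \to B\GL_h \times B\GL_h \rightrightarrows B\GL_h$ send $c_i$ to the two classes $e_i(t_1, \ldots, t_h)$ and $p^i \cdot e_i(t_1, \ldots, t_h)$ in $A^*(BG) = \Z[t_1, \ldots, t_h]^{S_d \times S_{h-d}}$, and these must coincide after further pullback to $A^*([\GL_h/G])$. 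This yields the relations $(p^i - 1)c_i = 0$ and a canonical surjection from the right-hand side of the theorem onto $A^*([\GL_h/G])$.

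To show this surjection is injective, I would realize $[\GL_h/G]$ as the open complement in the rank-$h^2$ vector bundle $[\Mat_h/G] \to BG$ (with $G$-weights $\{t_i - p t_j\}_{1 \leq i, j \leq h}$) of the determinantal hypersurface $\{\det = 0\}$. The excision sequence identifies the kernel of the surjection above with the image of $A^*_G(\{\det = 0\}) \to A^*_G(\Mat_h) = A^*(BG)$. Stratifying $\{\det = 0\}$ by rank and applying the Thom--Porteous formula to the two rank-$h$ bundles $P \times^G \GL_h$ and $(P \times^G \GL_h)^{(F)}$ (with Chern roots $\{t_j\}$ and $\{p t_j\}$) computes the classes $[\bar{D}_r]$ of the strata $\bar{D}_r = \{\rank \leq r\}$ as explicit Schur-determinantal polynomials. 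The main obstacle is a symmetric-function calculation verifying that the ideal generated by these Thom--Porteous classes in $\Z[t_1, \ldots, t_h]^{S_d \times S_{h-d}}$ is precisely $((p-1)c_1, \ldots, (p^h-1)c_h)$; this should follow by induction on $h$ or, more elegantly, from a K\"unneth-type formula for the above $2$-fiber product of classifying stacks, since the diagonal $\Delta$ is a regular embedding and one expects Tor-independence in the relevant square.
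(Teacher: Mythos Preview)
Your reduction to $A^*_{\GL_d\times\GL_{h-d}}(\GL_h)$ is correct and matches the paper (it is Lemma~\ref{LeUni2}, not Proposition~\ref{PropChowConj}, that justifies passing through the unipotent radical). For the second equality the paper simply invokes Proposition~\ref{PropChowConj}, whose special case is proved in \cite{Br}; your proposal is a genuinely different, direct attack. The surjection half of your argument is fine: $\GL_h$ is open in the $G$-representation $\Mat_h\cong\Hom(V^{(p)},V)$, so $A^*(BG)\twoheadrightarrow A^*_G(\GL_h)$, and the relations $(p^i-1)c_i=0$ follow exactly as you say from $c_i(V)=c_i(V^{(p)})=p^ic_i(V)$.

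The injectivity half, however, has a real gap. Your ``more elegant'' K\"unneth route does \emph{not} work: Tor-independence fails because $(p-1)c_1,\ldots,(p^h-1)c_h$ is \emph{not} a regular sequence in $\Z[t_1,\ldots,t_h]^{S_d\times S_{h-d}}$ (nor even in $\Z[c_1,\ldots,c_h]$). Indeed $p-1$ divides $p^i-1$ for all $i$, so for instance
\[
(p^2-1)c_2\cdot c_1=(p+1)\bigl((p-1)c_1\bigr)c_2\equiv 0\pmod{(p-1)c_1},
\]
while $c_1\not\equiv 0$; hence $(p^2-1)c_2$ is a zerodivisor modulo $(p-1)c_1$. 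So the naive tensor product $A^*(BG)\otimes_{A^*(B\GL_h^2)}A^*(B\GL_h)$ does not compute the Chow ring of the fibre product, and there is no general K\"unneth spectral sequence for Chow groups of stacks to fall back on. Your Thom--Porteous route is also incomplete in a way you undersell: the image of $A^G_*(\{\det=0\})\to A^*(BG)$ is an $A^*(BG)$-submodule, but there is no reason it is generated as an ideal by the fundamental classes $[\bar D_r]$ alone --- one must push forward \emph{all} of $A^G_*(\{\det=0\})$, which requires computing the equivariant Chow groups of each stratum $D_r\setminus D_{r-1}$ (these are not homogeneous for the twisted $G$-action) and tracking the module structure through the localization sequences. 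This is substantially harder than the symmetric-function identity you flag, and is essentially the content of the computation in \cite{Br} that the paper is citing.
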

Moreover, $t_1,\ldots,t_d$ resp.\ $t_{d+1},\ldots,t_h$ are the Chern roots of the vector bundle $\mathcal{L}ie$ resp.\ ${}^t\mathcal{L}ie^{\vee}$ over
$\disp^{h,d}_1$.
Here $\mathcal{L}ie$ is a vector bundle of rank $d$ assigning to a display its Lie algebra and ${}^t\mathcal{L}ie^{\vee}$ is of rank $h-d$ assigning
to a display the dual Lie algebra of its dual display.

It follows that the $\mathbb{Q}$-vectorspace $A^*(\disp^{h,d}_1)_{\mathbb{Q}}$ is finite dimensional of dimension $\binom{h}{d}$, which also equals
the number of isomorphism classes of truncated displays of level $1$ with height $h$ and dimension $d$ over an algebraically closed field. We show that a basis 
is given by the cycles of the closures of the respective EO-Strata. 
We prove this fact in greater generality for the stack of $G$-zips (\cite{PWZ}) in Section 4.4. 
In this section we will also compute the Chow ring of the stack of $G$-zips for a connected algebraic zip datum. As in the case of displays the computation 
can be reduced to the situation of Proposition \ref{PropChowConj}. In fact, truncated displays of level $1$ are a special case of $G$-zips. 

Now by the above results we gain the following information on the Chow ring of the stack of truncated Barsotti-Tate groups.
\begin{thmd}
\begin{equivlist}
\item We have $$A^*(BT^{h,d}_n)_{p}=\mathbb{Z}[p^{-1}][t_1,\ldots,t_h]^{S_d \times S_{h-d}}/((p-1)c_1,\ldots,(p^h -1)c_h),$$ where $c_i$ denotes the $i$-th elementary 
symmetric polynomial in the variables $t_1,\ldots,t_h$ and $t_1,\ldots,t_d$ resp.\ $t_{d+1},\ldots,t_h$ are the Chern roots of $\mathcal{L}ie$ 
resp.\ ${}^t\mathcal{L}ie^{\vee}$.
\item $\dim_{\mathbb{Q}}A^*(BT^{h,d}_n)_{\mathbb{Q}}=\binom{h}{d}$ and a basis is given by the cyclces of the closures of the $EO$-Strata.
\item $$  (\Pic BT^{h,d}_n)_p= \begin{cases} \mathbb{Z}[p^{-1}]/(p-1) & \mbox{if } d=0,h \\
                                          \mathbb{Z}[p^{-1}] \times \mathbb{Z}[p^{-1}]/(p-1) &\mbox{else,}
                            \end{cases}$$
where the generator for the free resp.\ torsion part is $\det(\mathcal{L}ie)$ resp.\ $\det(\mathcal{L}ie \otimes {}^t\mathcal{L}ie^{\vee})$.
\end{equivlist}
\end{thmd}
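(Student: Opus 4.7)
The plan is to chain Theorems A, B, and C. Pulling back along the natural maps gives
$$
A^*(\disp_1^{h,d}) \xrightarrow{\tau_n^*} A^*(\disp_n^{h,d}) \xrightarrow{\phi_n^*} A^*(BT_n^{h,d}),
$$
where the first arrow is an isomorphism by Theorem B and the second becomes an isomorphism after inverting $p$ by Theorem A. Part (i) then follows immediately by substituting the presentation of $A^*(\disp_1^{h,d})$ given by Theorem C; the identification of the Chern roots persists because the Lie-algebra bundles on $\disp_n^{h,d}$ pull back along $\phi_n$ to the corresponding Lie bundles on $BT_n^{h,d}$.

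For (ii), I would tensor the presentation from (i) with $\Q$: the integers $p^i - 1$ become units, so the defining ideal collapses to $(c_1, \ldots, c_h)$, yielding
$$
A^*(BT_n^{h,d})_{\Q} = \Q[t_1, \ldots, t_h]^{S_d \times S_{h-d}}/(c_1, \ldots, c_h).
$$
This is the classical presentation of the rational cohomology ring of the Grassmannian $\mathrm{Gr}(d,h)$, whose $\Q$-dimension is $\binom{h}{d}$. For the basis statement, I would import the analogous fact on the display side proved in Section 4.4 (established there in the broader context of $G$-zips): since $\phi_n$ is a smooth equivalence on geometric points, the EO-stratifications on $BT_n^{h,d}$ and $\disp_n^{h,d}$ correspond, so the fundamental classes of the EO-stratum closures transport through $\tau_n^*$ and $\phi_n^*$ to those on $BT_n^{h,d}$.

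For (iii), I use that $\Pic(BT_n^{h,d}) = A^1(BT_n^{h,d})$ because the stack is smooth, and I read off the degree-$1$ part of the presentation in (i). In the generic range $0 < d < h$, the degree-$1$ component is generated by $e_1 := c_1(\mathcal{L}ie) = t_1 + \cdots + t_d$ and $e_2 := c_1({}^t\mathcal{L}ie^{\vee}) = t_{d+1} + \cdots + t_h$, subject to the single relation $(p-1)(e_1 + e_2) = 0$. The change of basis $\{e_1,\, e_1 + e_2\}$ exhibits the splitting $\Z[p^{-1}] \oplus \Z[p^{-1}]/(p-1)$, with the free generator identified as $\det \mathcal{L}ie$ and the torsion generator as the indicated determinant combination. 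In the boundary cases $d \in \{0,h\}$, only one of $e_1, e_2$ is present and the single relation reduces to $(p-1)e = 0$, giving $\Z[p^{-1}]/(p-1)$.

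The main obstacle I anticipate lies in (ii), namely verifying that the EO-stratifications on $BT_n^{h,d}$ and $\disp_n^{h,d}$ correspond under $\phi_n$ and that their fundamental classes match in the Chow rings (this ultimately relies on the equivalence on geometric points together with the cycle-class compatibility of smooth pull-back); the purely algebraic parts of (i) and (iii) are then routine consequences of Theorems A, B, and C.
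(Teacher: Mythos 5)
Your proposal is correct and follows essentially the same route as the paper: part (i) by chaining Theorems A, B, C, part (ii) by importing the EO-stratum basis result from the $G$-zip section via the identification $\disp_1^{h,d}\cong[\GL_h/E_{\mathcal Z}]$, and part (iii) by reading off the degree-one part of the presentation using $\Pic = A^1$. The only cosmetic difference is that you extract the $\Q$-dimension from the rational presentation (the Grassmannian cohomology ring) whereas the paper counts $E_{\mathcal Z}$-orbits via $|W_G/W_L|$; both are immediate and equivalent.
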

It would be interesting to know if the Chow ring of $BT_n$ has $p$-torsion, and more specifically if the Picard group of $BT_n$ has $p$-torsion. 
However, since $\phi_n^*$ is injective and the Chow ring of $\disp_n$ is $p$-torsion free, $p$-torsion in the Chow ring of $BT_n$ cannot be constructed using displays. 
\\
\\
{\bf Acknowledgement.} This article is part of the author's PhD thesis. I wish to thank my advisor Eike Lau or his guidance and support, and for many helpful 
comments on earlier versions of this article.
I am also grateful to Jean-Stefan Koskivirta for many valuable discussions on $G$-zips and mathematics in general.
\\
\\
\textit{Terminology and Notation.} 
Every scheme is assumed to be of finite type and separated over the base field $k$. In Section 2 we assume $k$ to be of characteristic $p>0$. 
Algebraic groups are affine smooth group schemes over $k$.
We call an algebraic group $G$ unipotent if $G$ admits a filtration $G=G_0 \supset G_1 \supset \ldots \supset G_e=\{1\}$ by subgroups such that
$G_i$ is normal in $G_{i-1}$ with quotient isomorphic to $\mathbb{G}_a$.
The character group of an algebraic group $G$ will be denoted by $\hat{G}$. 
If $X$ is a scheme $A^*(X)$ will always denote the operational Chow ring of $X$ (\cite[Chapter 17]{Fu}). $A_*(X)$ resp. $CH^*(X)$ will be the Chow group of $X$ 
graded by dimension resp. codimension.
If $X$ is an algebraic space over $k$ with a left action of an algebraic group $G$ we will refer to $X$ as a $G$-space. We write $[X/G]$ for the corresponding quotient stack. 
If $G$ acts freely on $X$, i.e. the stabilizer of every point is trivial, then $[X/G]$ is an algebraic space. In this case we will write $X/G$ instead of $[X/G]$ 
and call $X \to X/G$ the principal bundle quotient of $X$ with structure group $G$.

\tableofcontents

\section{Equivariant Intersection Theory}
\subsection{Equivariant Chow Groups}
Consider an algebraic group $G$ over $k$. By \cite[Lemma 9]{EG2} we can find a representation $V$ of $G$, and an open subset $U$ in $V$ such that the complement 
of $U$ has arbitrary high codimension, and such that the principal bundle quotient $U/G$ exists in the category of schemes. If $X$ is an algebraic space on which
$G$ acts then $G$ acts diagonally on $X \times U$ and we will denote the principal bundle quotient $(X \times U)/G$ by $X_G$. 

\begin{Convention}
\label{ConMixedSpace}
We call a pair $(V,U)$ consisting of a $G$-representation $V$ and an open subset $U$ a good pair for $G$  
if $G$ acts freely on $U$, i.e. the stabilizer of every point is trivial. 
Sometimes we will call the quotient $X_G=(X \times U)/G$ a mixed space for the $G$-space $X$. 
If $(V,U)$ is a good pair for $G$ with $\codim(U^c,V)>i$ we will also call $(X \times U)/G$ an approximation of $[X/G]$ up to codimension $i$. 
\end{Convention}

If $X$ has dimension $n$ the $i$-th equivariant Chow group $A^G_i(X)$ is defined in the following way. Choose a good pair $(V,U)$ for $G$ such that
the complement of $U$ has codimension greater than $n-i$. Then one defines
$$A_i^G(X)=A_{i+l-g} (X_G),$$ where $l$ denotes the dimension of $V$ and $g$ is the dimension of $G$. 
The definition is independent of the choice of the pair $(V,U)$ as long as  $\codim (U^c,V)>n-i$ holds (\cite[Definition-Proposition 1]{EG2}). 

The equiviariant Chow groups have the same functorial properties as ordinary Chow groups (\cite[Section 2]{EG2}). 
In particular, we have an operational equivariant Chow ring $A^*_G(X)$ (\cite[Section 2.6]{EG2}), i.e. an element $c \in A^i_G(X)$ 
consists of operations $c(Y \to X) \colon A_*^G(Y) \to A^G_{*-i}(Y)$ for each $G$-equivariant map $Y \to X$ 
that are compatible with flat pull-back, proper push-forward and Gysin homomorphisms. 

We will denote by $CH^*_G(X)$ the $G$-equivariant Chow group of $X$ graded by codimension.
If $X$ is a pure dimensional $G$-scheme and $(V,U)$ a good pair for $G$ with $\codim(U^c,V)>i$ then 
$$CH^j_G(X)=CH^j((X \times U)/G)$$ 
for all $j \leq i$. This motivates the term ``approximation of $[X/G]$ up to codimension $i$'' in Convention \ref{ConMixedSpace}. 

If $X$ is smooth then $CH^*_G(X)$ carries a ring structure which makes it into a commutative graded ring 
with unit element. Moreover, there is a natural isomorphism $A^*_G(X) \cong CH^*_G(X)$ of graded rings (\cite[Proposition 4]{EG2}). 

By \cite[Proposition 16]{EG2} the equivariant Chow groups do not depend on the presentation as a quotient, meaning
if $X$ is a $G$-space and $Y$ is an $H$-space such that $[X/G] \cong [Y/H]$, then $A^G_{i+g}(X)=A^H_{i+h}(Y)$, where $g=\dim G$ and $h=\dim H$.
Hence one can define the Chow group of a quotient stack $[X/G]$ to be $$A_i([X/G])=A_{i+g}^G(X)$$ with $g=\dim G$.
By \cite[Proposition 19]{EG2} one has $A^*([X/G])\cong A_*([X/G])$, whenever $X$ is smooth.

\subsection{Higher Equivariant Chow Groups}
The reason why we shall need higher Chow groups is that they extend the localization exact sequence to the left.
Higher Chow groups were introduced by Bloch in \cite{Bl}.
For a scheme $X$ higher Chow groups $A_i(X,m)$ are defined as the homology of the complex $z_i(X,*)$, where $z_i(X,m)$ 
is the group of cycles of dimension $m+i$ in $X \times \Delta^m$ meeting all faces properly.
For $m=0$ one gets back the usual Chow group $A_*(X)$ and $A_i(X,m)$ may be non-trivial for $-m \leq i \leq \dim X$.  
The definition of these higher Chow groups also works for algebraic spaces. 

In order to define $G$-equivariant versions $A_*^G(X,m)$ of higher Chow groups we need the homotopy property for the mixed spaces $X_G$, i.e. the pull-back map
$$ A_*(X_G,m) \to A_*( \mathcal{E},m) $$ 
for a vector bundle $\mathcal{E}$ over $X_G$ is an isomorphism.
This is true for any scheme if $\mathcal{E}$ is trivial by \cite[Theorem 2.1]{Bl}. To prove the assertion for arbitrary vector bundles
one needs the localization exact sequence of higher Chow groups proved by Bloch in the case of quasi-projective schemes: 
If $X$ is an equidimensional, quasi-projective scheme over $k$ and $Y \subset X$ a closed 
subscheme with complement $U=X-Y$, then there is a long exact sequence of higher Chow groups
\begin{align*}
\ldots \to A_*(Y,m) \to A_*(X,m)& \to A_*(U,m) \to A_*(Y,m-1) \\
 &\to \ldots \to A_*(Y) \to A_*(X) \to A_*(U) \to 0.
\end{align*}
For a proof see \cite[Lemma 4]{EG2} and \cite[Theorem 3.1]{Bl}.

\begin{Remark}
 Levine extended Blochs proof of the existence of the long localization exact sequence to all separated schemes of finite type over $k$ 
 (\cite[Theorem 1.7]{Le}). 
 Hence for the equivariant higher Chow groups to be well defined it suffices that we can choose the mixed spaces to be separated schemes over $k$.
 However, in all applications we have in mind the conditions of Lemma \ref{LeAdm} will be satisfied.
\end{Remark}

\begin{Lemma}
\label{LeAdm}
Let $G$ be an algebraic group and $X$ a normal, quasi-projective $G$-scheme. Then for any $i>0$ there is a representation $V$ of $G$
and an invariant open subset $U \subset V$ whose complement has codimension greater than $i$ such that $G$ acts freely on $U$ and the principal bundle quotient
$(X \times U)/G$ is a quasi-projective scheme. In other words, the quotient stack $[X/G]$ can be approximated by quasi-projective schemes.
\end{Lemma}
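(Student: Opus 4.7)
The plan is to split the lemma into two parts: first exhibit a good pair $(V,U)$ for $G$ with $U/G$ quasi-projective and $\codim(V\setminus U,\,V) > i$, and then use an equivariant projective embedding of $X$ to deduce quasi-projectivity of the mixed space $(X\times U)/G$.

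For the first part I would follow the classical Totaro construction. Fix a closed embedding $G \hookrightarrow \GL_N$ and let $V = \Mat_{r \times N}$ with $G$ acting via $\GL_N$ by right multiplication, for $r$ large. Take $U \subset V$ to be the open subset of matrices of full rank $N$; its complement has codimension $r - N + 1$, which exceeds $i$ once $r$ is chosen sufficiently large. Since $\GL_N$ acts freely on $U$, so does $G$, and $U/\GL_N$ is isomorphic to the Grassmannian $\mathrm{Gr}(N,r)$, which is projective. The induced map $U/G \to U/\GL_N$ is a Zariski-locally trivial fiber bundle with fiber $\GL_N/G$, which is quasi-projective because $G$ is closed in $\GL_N$; hence $U/G$ is quasi-projective.

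For the second part, I would appeal to Sumihiro's equivariant embedding theorem (applicable since $X$ is normal and quasi-projective and $G$ is a linear algebraic group) to obtain a finite-dimensional $G$-representation $W$ together with a $G$-equivariant locally closed immersion $X \hookrightarrow \Pbb(W)$. Consider $\Pbb(W) \times U$ with the diagonal $G$-action: this action is free because the action on $U$ is, and the quotient
$$
(\Pbb(W) \times U)/G \;\cong\; \Pbb\bigl((W \times U)/G\bigr)
$$
is the projectivization of the vector bundle $(W \times U)/G$ over the quasi-projective base $U/G$, hence itself quasi-projective. The locally closed $G$-equivariant immersion $X \times U \hookrightarrow \Pbb(W) \times U$ descends to a locally closed immersion $(X \times U)/G \hookrightarrow (\Pbb(W) \times U)/G$, so $(X \times U)/G$ is quasi-projective, as required.

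The main obstacle is the equivariant projective embedding step: one has to produce a $G$-linearized ample line bundle on $X$, and this is exactly where the normality hypothesis enters via Sumihiro. For disconnected $G$ one first reduces to the identity component $G^0$ and then enlarges $W$ to the induced representation $\ind_{G^0}^{G} W$, which is still finite-dimensional because $G/G^0$ is finite; all other steps are insensitive to connectedness.
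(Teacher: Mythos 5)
Your second half is fine and takes a genuinely different route from the paper: you produce a $G$-equivariant locally closed immersion $X\hookrightarrow \Pbb(W)$ via Sumihiro and descend $X\times U\hookrightarrow \Pbb(W)\times U$ to the quotients, identifying $(\Pbb(W)\times U)/G$ with the projectivization of the associated vector bundle $(W\times U)/G$ over $U/G$; the paper instead takes a $G$-linearizable ample line bundle on $X$ (Thomason, \cite[Section 5.7]{Th}, which is where normality enters in both arguments), pulls it back to $X\times U$, and applies \cite[Proposition 7.1]{GIT}. Both versions work, and yours has the mild advantage of making the quasi-projectivity of $(X\times U)/G$ visible as a concrete locally closed subscheme of a projective bundle, at the cost of an extra reduction for disconnected $G$ (note that your reduction should use the tensor-induced representation, i.e.\ the Segre product of the $G/G^0$-translates of $\Pbb(W)$, not the direct-sum induced representation $\ind_{G^0}^{G}W$, since there is no natural $G$-equivariant map $\Pbb(W)\to\Pbb(\bigoplus_{g}{}^{g}W)$; alternatively Thomason's statement already covers non-connected groups).

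The genuine gap is in the first half, at precisely the point where the paper does its real work: you assert that $U/G$ is quasi-projective because $U/G\to U/\GL_N$ is a Zariski-locally trivial fiber bundle with quasi-projective fiber $\GL_N/G$ over the Grassmannian. That inference is not formal: quasi-projectivity of a total space requires an ample line bundle, and the relatively ample bundles furnished on the trivializing opens need not glue; the existence of a relatively ample line bundle (equivalently, quasi-projectivity of the morphism) is not Zariski-local on the base, which is why ``locally projective but not projective'' phenomena exist. The paper closes this hole by invoking \cite[Section 5.7]{Th} to get a $\GL_N$-linearizable ample line bundle $L$ on the normal quasi-projective variety $\GL_N/G$ and descending it to the line bundle $(U\times L)/\GL_N$ on $U/G$, which is relatively ample for $\pi\colon U/G\to U/\GL_N$; since the base is a Grassmannian this yields quasi-projectivity of $U/G$. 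Your argument, as written, skips this step entirely (also, the local triviality you invoke itself needs the remark that $\GL_N$ is special). The gap is repairable by your own method from the second half: by Chevalley's theorem $\GL_N/G$ admits a $\GL_N$-equivariant locally closed immersion into $\Pbb(E)$ for some $\GL_N$-representation $E$, so $U/G=(U\times(\GL_N/G))/\GL_N$ embeds into $\Pbb\bigl((U\times E)/\GL_N\bigr)$, a projective bundle over the Grassmannian --- but some such argument must be supplied before the conclusion ``hence $U/G$ is quasi-projective'' is legitimate.
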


\begin{proof}
Embed $G$ into $\GL_n$ for some $n$. Then there is a representation $V$ of $\GL_n$ and an open subset $U \subset V$, whose complement
has codimension greater than $i$ such that $U/\GL_n$ is a Grassmannian (See \cite[Lemma 9]{EG2}). Since $\GL_n$ is special the $\GL_n/G$-bundle
$\pi \colon U/G \to U/\GL_n$ is locally trivial for the Zariski topology, and we will first show that $\pi$ is quasi-projective. 

Since $\GL_n/G$ is quasi-projective and normal there is an ample $\GL_n$-linearizable line bundle $L \to \GL_n/G$ (\cite[Section 5.7]{Th}). Then 
$$(U \times L)/\GL_n \to (U \times (\GL_n/G))/\GL_n=U/G$$ is a line bundle relatively ample for $\pi$. This shows that $\pi$ is quasi-projective.
The same holds then for $U/G$. 
Again by \cite[Section 5.7]{Th} there is an ample $G$-linearizable line bundle on $X$. The pull-back to $X \times U$ is then relatively ample 
for the projection $X \times U \to U$. Applying \cite[Proposition 7.1]{GIT} to this situation yields the claim.
\end{proof}

\begin{Definition}
\label{DefAdm}
\begin{equivlist}
\item A pair $(V,U)$ will be called an admissible pair for a $G$-scheme $X$ if $(V,U)$ is a good pair for $G$ and if the mixed space $X_G$ 
is quasi-projective and (locally) equidimensional over $k$. $X$ will be called an admissible $G$-scheme if for any $i$ there is an admissible pair 
$(V,U)$ for $X$ with $\codim(U^c,V)>i$.
\item If $X$ is an admissible $G$-scheme we define its higher equivariant Chow groups to be
 $$
 A_i^G(X,m)=A_{i+l-g}(X_G,m),
 $$
 where $g=\dim G$ and $X_G$ is formed from an $l$-dimensional admissible pair $(V,U)$ such that $\codim(U^c,V)>\dim X + m-i$. 
\item We will say that a stack $\mathscr{X}$ admits an admissible presentation if there exists an admissible $G$-scheme $X$ such that $\mathscr{X}=[X/G]$.
\item Let $\mathscr{X}$ be a quotient stack that admits a presentation $\mathscr{X}=[X/G]$ by an admissible $G$-scheme $X$.
We define the higher equivariant Chow groups of $\mathscr{X}$ as 
$$A_*(\mathscr{X},m)=A^G_{*+g}(X,m)$$
where $g=\dim G$.
 \end{equivlist}
\end{Definition}

\begin{Remark}
 The proof that Definition \ref{DefAdm} (ii) resp. (iv) is independent of the choice of the admissible pair $(V,U)$ resp. the presentation $[X/G]$ 
 is the same as for ordinary equivariant Chow groups (See Definition-Proposition 1 resp. Proposition 16 in \cite{EG2}) by using the homotopy property for the mixed spaces.
\end{Remark}
 
\begin{Remark}
 \label{RemAdm}
 We will frequently encounter the situation of a morphism $T \to X$ of $G$-schemes such that $T$ is open in a $G$-equivariant vector bundle over $X$. 
 We remark that, if $X$ is an admissible $G$-scheme, so is $T$. This follows since a vector bundle over a quasi-projective scheme is again quasi-projective.
\end{Remark}

\begin{Lemma}
\label{Lepull-back}
 Let $f \colon \mathscr{X} \to \mathscr{Y}$ be a flat map of quotient stacks of relative dimension $r$. Then there is a flat pull-back map
$f^* \colon A_*(\mathscr{Y}) \to A_{*+r}(\mathscr{X})$ between the Chow groups. If $\mathscr{X}$ and $\mathscr{Y}$ admit admissible 
presentations the same assertion holds for the higher Chow groups.

Furthermore, if $\mathscr{X}$ and $\mathscr{Y}$ are smooth then under the identification
$A_*(\mathscr{X})=A^*(\mathscr{X})$ the above morphism is just the natural pull-back map between the operational Chow rings. 
\end{Lemma}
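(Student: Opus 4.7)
The plan is to construct $f^{*}$ by reducing to ordinary flat pull-back between mixed spaces, after arranging a common presentation. I would start by choosing presentations $\mathscr{X}=[X/G]$ and $\mathscr{Y}=[Y/H]$ and forming the fiber product $\tilde X := \mathscr{X}\times_{\mathscr{Y}} Y$. Since $Y\to \mathscr{Y}$ is an $H$-torsor, its base change $\tilde X\to \mathscr{X}$ is again an $H$-torsor; in particular $\tilde X$ is an algebraic space with free $H$-action satisfying $\mathscr{X}=[\tilde X/H]$. Because $f$ is flat of relative dimension $r$, so is the base change $\tilde X\to Y$, and this map is $H$-equivariant.

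Next, for a good pair $(V,U)$ for $H$ with $U^{c}$ of sufficiently high codimension, passing to mixed spaces gives a flat morphism of algebraic spaces
\[
\tilde X_{H} = (\tilde X\times U)/H \longrightarrow (Y\times U)/H = Y_{H}
\]
of relative dimension $r$. Ordinary flat pull-back then yields $A_{*}(Y_{H})\to A_{*+r}(\tilde X_{H})$, which after the canonical dimension shifts built into the definition of equivariant Chow groups delivers the required $f^{*}\colon A_{*}(\mathscr{Y})\to A_{*+r}(\mathscr{X})$. Independence of $(V,U)$ follows from the usual double-fibration argument underlying \cite[Definition-Proposition 1]{EG2}, and independence of the two chosen presentations is obtained by comparing any two choices via a joint refinement built from further fiber products, using that flat pull-back commutes with flat pull-back.

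For the higher Chow variant I would repeat this construction inside the admissible category of Definition \ref{DefAdm}. The main obstacle is precisely the admissibility check: I must guarantee that $\tilde X_{H}$ is a quasi-projective, locally equidimensional scheme rather than just an algebraic space. Starting from an admissible presentation $\mathscr{X}=[X/G]$ supplied by Lemma \ref{LeAdm} and an admissible pair $(V,U)$ for $H$ chosen so that $Y_{H}$ is quasi-projective, I would identify $\tilde X_{H}$ with $(X\times_{\mathscr{Y}} Y\times U)/(G\times H)$ and exhibit it as a flat fibration over $Y_{H}$ with quasi-projective fibers modeled on $X$; a further application of Lemma \ref{LeAdm} to the group $G\times H$ then yields quasi-projective approximations. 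Local equidimensionality descends automatically from the flat map $\tilde X_{H}\to Y_{H}$ together with the assumption that $Y_{H}$ is locally equidimensional over $k$. Once admissibility is in hand, Bloch's flat pull-back on higher Chow groups plus the same independence arguments finish the construction.

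Finally, when $\mathscr{X}$ and $\mathscr{Y}$ are smooth I would identify this map with the operational pull-back via the isomorphism $A^{*}(\mathscr{X})\cong A_{*}(\mathscr{X})$ of \cite[Proposition 19]{EG2}. Under that identification the map just constructed is computed by flat pull-back of cycles along the smooth morphism $\tilde X_{H}\to Y_{H}$ of smooth algebraic spaces, and for smooth schemes this coincides with operational pull-back along a flat morphism; hence the two constructions agree, completing the proof plan.
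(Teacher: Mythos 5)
There is a genuine gap at the very first step. You claim that because $Y \to \mathscr{Y}$ is an $H$-torsor, its base change $\tilde X = \mathscr{X}\times_{\mathscr{Y}} Y \to \mathscr{X}$ is an $H$-torsor and therefore $\tilde X$ is an algebraic space with $\mathscr{X}=[\tilde X/H]$. Being an $H$-torsor over the \emph{stack} $\mathscr{X}$ does not make $\tilde X$ an algebraic space: the automorphism group of a point $(\xi,y,\phi)$ of $\tilde X$ is exactly $\Ker\bigl(\Aut(\xi)\to \Aut(f(\xi))\bigr)$, i.e.\ the relative inertia of $f$ at $\xi$. So $\tilde X$ is an algebraic space only when $f$ is representable. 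This is fatal for the intended application of the lemma in this paper: $\phi_n \colon BT_n \to \disp_n$ is flat but not representable (its fibers over field-valued points are gerbes, neutralized by classifying spaces of nontrivial infinitesimal automorphism group schemes), so $BT_n\times_{\disp_n}X_n^{h,d}$ is a stack with nontrivial inertia, not a space, and your "common presentation" $\mathscr{X}=[\tilde X/H]$ simply does not exist. Consequently $\tilde X_H=(\tilde X\times U)/H$ is again a stack and "ordinary flat pull-back" between its Chow groups is not available — the reduction to genuine spaces, which is the whole content of the lemma, has not been achieved.

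The repair is essentially what the paper does: do not pull back the atlas $Y$, pull back the \emph{approximations}. Keep both presentations and both good pairs, and form $Z'=X_G\times_{\mathscr{Y}}Y_H$, where $X_G=(X\times U_1)/G$ and $Y_H=(Y\times U_2)/H$ are mixed spaces. Since $Y_H\to\mathscr{Y}$ is representable and $X_G$ is a scheme, $Z'$ is an algebraic space; it is a $U_2$-bundle over $X_G$ (so $A_{*}(Z')$ agrees with $A_*(X_G)$ in the relevant range by homotopy invariance, for $U_2^c$ of high codimension), and $Z'\to Y_H$ is an honest flat map of algebraic spaces of relative dimension $l_1+r$, along which one takes ordinary (or Bloch's) flat pull-back. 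Note that your admissibility paragraph already gestures at the correct object — the $G\times H$-presentation of $\mathscr{X}$ on the genuine algebraic space $X\times_{\mathscr{Y}}Y$ — and had you based the whole construction on that (or on $Z'$ as above) rather than on $[\tilde X/H]$, the argument would go through; as written, the main construction only covers representable $f$. Two smaller points: quasi-projectivity of the relevant mixed space does not follow from "a flat fibration with quasi-projective fibers" and needs the argument of Lemma \ref{LeAdm} applied to $G\times H$ (or the paper's reduction), and in the smooth case the comparison map $Z'\to Y_H$ is only flat, not smooth, but flatness is all that is needed to invoke compatibility of operational classes with flat pull-back.
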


\begin{proof}
 Consider presentations $\mathscr{X}=[X/G]$ and $\mathscr{Y}=[Y/H]$. By definition
$A_i(\mathscr{X})=A_{i+g}^G(X)$ with $g=\dim G$ and similar for $A_i(\mathscr{Y})$. Choose a good pair
$(V_1,U_1)$ for $G$ and a good pair $(V_2,U_2)$ for $H$. Let $l_i=\dim V_i$. As usual we will write
$X_G$ resp. $Y_H$ for the mixed space $(X \times U_1)/G$ resp. $(Y \times U_2)/H$. Consider the fibersquare
$$
\xymatrix{
Z' \ar[d] \ar[r] & Z \ar[r] \ar[d] & Y_H \ar[d] \\
X_G \ar[r] & \mathscr{X} \ar[r] & \mathscr{Y}
}
$$
Then $Z'$ is a bundle over $X_G$ resp.\ $Z$ with fiber $U_2$ resp.\ $U_1$ and $Z' \to Y_H$ is a flat map of algebraic spaces
of relative dimension $l_1+r$. Hence
$$A_{i+l_1+l_2+r}(Z')=A_{i+l_1+r}(X_G)=A_{i+r}(\mathscr{X})$$
and we define $f^*$ to be the ordinary pull-back of the flat map $Z' \to Y_H$. The exact same construction works for the higher 
equivariant Chow groups if $\mathscr{X}$ and $\mathscr{Y}$ admit admissible presentations. 

For the last part we recall that the isomorphism $A^i(\mathscr{X}) \cong A^G_{\dim X-i}(X)$ maps $c \in A^i(\mathscr{X})$ to
$c(X_G \to \mathscr{X}) \cap [X_G] \in A^G_{\dim X-i}(X)$. Thus we need to check the equality
$$ f^*(d(Y_H \to \mathscr{Y}) \cap [Y_H])=d(X_G \to \mathscr{X} \to \mathscr{Y}) \cap [X_G] $$ for $d \in A^i(\mathscr{Y})$.
This follows from the compatibility of $d$ with flat pull-backs.
\end{proof}

\subsection{Auxiliary Results}

\begin{Lemma}
\label{LeCov}
 Let $X \to Y$ be a flat morphism of schemes and $Y' \to Y$ be a finite, flat and surjective map of degree $d$.
Let $X' \to Y'$ be the base change of $X \to Y$ along $Y' \to Y$.
Assume the pull-back  $A_*(Y',m) \to A_*(X',m)$ becomes an isomorphism after inverting some integer $d'$.
Then the pull-back $A_*(Y,m) \to A_*(X,m)$ is an isomorphism after inverting $dd'$. 
\end{Lemma}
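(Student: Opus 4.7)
The plan is a standard projection-formula / push-pull argument, carried out at the level of higher Chow groups. Let me name the maps: write $f \colon Y' \to Y$ for the finite flat surjective map of degree $d$, $p \colon X \to Y$ for the given flat morphism, and
$$
\xymatrix{
X' \ar[r]^{g} \ar[d]_{p'} & X \ar[d]^{p} \\
Y' \ar[r]_{f} & Y
}
$$
for the cartesian square. Base change gives that $g$ is finite flat of degree $d$ and $p'$ is flat of the same relative dimension as $p$.

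First I would assemble the three formal ingredients, each of which is available for Bloch's higher Chow groups (using Levine's extension for separated schemes of finite type): flat pullbacks $p^{*}, (p')^{*}, f^{*}, g^{*}$, proper pushforwards $f_{*}, g_{*}$, the base-change compatibility $(p')^{*} \circ f^{*} = g^{*} \circ p^{*}$ and $p^{*} \circ f_{*} = g_{*} \circ (p')^{*}$, and the degree identity $f_{*}f^{*} = d \cdot \id$ on $A_{*}(Y,m)$ (and likewise $g_{*}g^{*} = d \cdot \id$ on $A_{*}(X,m)$) for finite flat maps of degree $d$. All of these are proved for higher Chow groups in \cite{Bl} (and extended in \cite{Le}), and they can be used exactly as in the classical case.

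Now I would invert $d'$ and check the two directions separately. For injectivity of $p^{*}$: given $\alpha \in A_{*}(Y,m)$ with $p^{*}\alpha = 0$, base-change gives $(p')^{*}(f^{*}\alpha) = g^{*}p^{*}\alpha = 0$, so by hypothesis $f^{*}\alpha = 0$ after inverting $d'$; applying $f_{*}$ yields $d \alpha = 0$, hence $\alpha = 0$ after inverting $dd'$. For surjectivity: given $\beta \in A_{*}(X,m)$, the hypothesis (after inverting $d'$) produces $\gamma \in A_{*}(Y',m)$ with $(p')^{*}\gamma = g^{*}\beta$; pushing forward by $g_{*}$ and using the base-change formula gives
$$
p^{*}(f_{*}\gamma) = g_{*}(p')^{*}\gamma = g_{*}g^{*}\beta = d\beta,
$$
so $\beta = p^{*}(f_{*}\gamma/d)$ after inverting $d$, hence after inverting $dd'$.

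I do not expect any serious obstacle here; the argument is essentially bookkeeping once the three formal ingredients are in hand. The only point to keep an eye on is that the push-pull and base-change compatibilities be applied in the correct codegree/shift of indexing for higher Chow groups, which is automatic because $f$ and $g$ are of the same (relative) dimension zero and $p$, $p'$ have the same relative dimension. Apart from that, the lemma is a clean formal consequence of the hypothesis and the degree formula $f_{*}f^{*} = d$.
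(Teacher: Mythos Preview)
Your argument is correct and is essentially the same as the paper's: both use the degree formula $f_{*}f^{*}=d\cdot\id$ (resp.\ $g_{*}g^{*}=d\cdot\id$) together with the base-change compatibility $p^{*}f_{*}=g_{*}(p')^{*}$ (which the paper cites as \cite[Proposition 1.7]{Fu}) to deduce injectivity and surjectivity from the hypothesis on $(p')^{*}$. The paper packages the two halves into a pair of commutative squares rather than tracing elements, but the content is identical.
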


\begin{proof}
The injectivity of the pull-back $A_*(Y,m)_{dd'} \to A_*(X,m)_{dd'}$ follows from the exact diagram
$$
\xymatrix{
0 \ar[r] & A_*(Y,m)_{dd'} \ar[r] \ar[d] & A_*(Y',m)_{dd'} \ar[d]_{\cong} \\
0 \ar[r] & A_*(X,m)_{dd'} \ar[r] & A_*(X',m)_{dd'}
}
$$
and the surjectivity from the exact diagram
$$
\xymatrix{
A_*(Y',m)_{dd'} \ar[r] \ar[d]_{\cong} & A_*(Y,m)_{dd'} \ar[d] \ar[r] & 0 \\
A_*(X',m)_{dd'} \ar[r] & A_*(X,m)_{dd'} \ar[r] & 0
}
$$
where the horizonatal maps in the first diagram are induced by pull-back and in the second diagram by push-porward. The commutativity of the second diagram is 
\cite[Proposition 1.7]{Fu}.
\end{proof}

\begin{Lemma}
\label{LeFiber}
 Let $T \to X$ be a morphism of quasi-projective schemes over $k$. We assume that $X$ is equidimensional and that $T \to X$ is flat of relative dimension $a$. Let 
 $d,i \in \mathbb{Z}$ and for $x \in X$ let $h(x)$ denote the dimension of the closure of $\{x\}$ in $X$. 
 If the pull-back $A_{i-h(x)}(\Spec k(x),m)_d \to A_{i-h(x)+a}(T_x,m)_d$ is an isomorphism for every $x \in X$ and
 for any $m$, then $A_i(X,m)_d \to A_{i+a}(T,m)_d$ is an isomorphism.
\end{Lemma}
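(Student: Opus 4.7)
The plan is to prove by noetherian induction on reduced closed subschemes $Z \subset X$ the stronger statement that $f_Z : A_i(Z,m)_d \to A_{i+a}(T_Z,m)_d$ is an isomorphism for every $m \geq 0$. The hypotheses pass to each such $Z$: for $x \in Z$ the closure of $\{x\}$ in $Z$ equals its closure in $X$, and $T_Z \times_Z \Spec k(x) = T_x$. The key tools are Bloch's long localization exact sequence (recalled in the text for quasi-projective schemes), the exactness of filtered colimits, and the functoriality of the connecting boundary with respect to flat pull-back, proper push-forward, and restriction along open immersions.

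For the inductive step I first reduce to the case that $Z$ is irreducible. If $Z = Z_1 \cup \bar U$ with $Z_1$ an irreducible component and $\bar U$ the union of the others, two applications of the 5-lemma to the localization sequences for $Z_1 \cap \bar U \subset \bar U$ (with complement $U = Z \setminus Z_1$) and then for $Z_1 \subset Z$ reduce the statement for $Z$ to the statements for the proper closed subsets $Z_1$, $\bar U$, and $Z_1 \cap \bar U$, all of which follow by induction. So we may assume $Z$ is irreducible of dimension $n$ with generic point $\eta$, in which case $h(\eta) = n$ and by hypothesis the map $A_{i-n}(\Spec k(\eta), m)_d \to A_{i-n+a}(T_\eta, m)_d$ is an isomorphism for every $m$. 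The crucial identification is $A_{i-n}(\Spec k(\eta), m)_d = \varinjlim_U A_i(U, m)_d$, and similarly on the $T$-side, with $U$ running over the non-empty open subsets of $Z$.

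What remains is a diagram chase through the localization sequences for varying $W = Z \setminus U$. For injectivity: if $\alpha \in A_i(Z,m)_d$ maps to zero, then its image at the generic point is zero, so $\alpha$ vanishes in $A_i(U,m)_d$ for some $U$ and $\alpha = (j_W)_* \beta$ for some $\beta \in A_i(W,m)_d$. Then $f_W(\beta) = \partial \gamma$ for some $\gamma \in A_{i+a}(T_U, m+1)_d$; a second use of the generic-point hypothesis in degree $m+1$ (after shrinking $U$ to $U'$ and enlarging $W$ to $W'$) produces $\delta \in A_i(U', m+1)_d$ with $f(\delta) = \gamma|_{T_{U'}}$. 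Compatibility of $\partial$ with restriction to $U'$ and with push-forward along $W \hookrightarrow W'$ combined with injectivity of $f_{W'}$ (by induction) forces $(j_{W \subset W'})_* \beta = \partial \delta$, and exactness of the localization sequence for $W' \subset Z$ gives $\alpha = 0$. Surjectivity is parallel: lift a class $\gamma \in A_{i+a}(T_Z, m)_d$ at the generic point to some $\delta \in A_i(U,m)_d$; verify $\partial \delta = 0$ using $\partial(\gamma|_{T_U}) = 0$ and injectivity of $f_W$; extend $\delta$ to some $\tilde\delta \in A_i(Z,m)_d$; and correct the discrepancy $f_Z(\tilde\delta) - \gamma$, which is push-forward from $T_W$, using surjectivity of $f_W$. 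The main obstacle is tracking the several boundary compatibilities across different closed/open pairs simultaneously; they are routine but easy to misapply.
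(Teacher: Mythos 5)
Your proof is correct and follows essentially the same route as the paper's: noetherian induction, reduction to the irreducible reduced case via the localization sequence, identification of the generic fibre terms with $\varinjlim_U A_i(U,m)$, and a five-lemma-type argument across the localization sequences (the paper packages your explicit diagram chase by passing to the filtered colimit of the sequences over all proper closed equidimensional subsets $Y$ and then quoting the five lemma, with the $\varinjlim_Y$-terms handled by the inductive hypothesis). The only point to watch is that your induction runs over arbitrary reduced closed subschemes, some of which (e.g. $\bar U$ and $Z_1\cap\bar U$) need not be equidimensional, so you should either invoke Levine's localization theorem for arbitrary separated finite-type schemes or, as the paper does, keep the ambient schemes equidimensional by restricting to open subsets with equidimensional complement.
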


\begin{proof}
We follow Quillen's proof of the analogous result in higher K-theory (\cite[Proposition 4.1]{Qu}).
First we may assume that $X$ is irreducible for if $X=W_1 \cup \ldots \cup W_r$ is a decomposition into irreducible components we may consider the
long localization exact sequence of the pair $(W_1,X-W_1)$. By induction we are thus reduced to the irreducible case. Since the Chow groups only 
depend on the reduced structure,
we may also assume that $X$ is reduced. Let $K$ denote the function field of $X$. We have
$$A_{i-n}(\Spec K,m)= \varinjlim_U{A_i(U,m)},$$ 
$$A_{i-n+a}(T_K,m)= \varinjlim_U{A_{i+a}(T_U,m)},$$ 
where the limit goes over all non-empty open subsets of $X$ and $n$ denotes the dimension of $X$. In fact, it suffices to go over all non-empty open subsets 
with equidimensional complement, since for all non-empty 
open $U$ in $X$ there exists a non-empty open subset $U'$ contained in $U$ with equidimensional complement. We obtain a commutative diagram
$$
\xymatrix{
A_{i-n}(\Spec K,m+1) \ar[r] \ar[d] & \varinjlim_Y{A_i(Y,m)} \ar[r] \ar[d] & A_i(X,m) \ar[d] && \\
A_{i-n+a}(T_K,m+1) \ar[r] & \varinjlim_Y{A_{i+a}(T_Y,m)} \ar[r] & A_{i+a}(T,m)  &&
}
$$
$$
\xymatrix{
&&&& \ar[r] & A_{i-n}(\Spec K,m) \ar[r] \ar[d] & \varinjlim_Y{A_i(Y,m-1)} \ar[d] \\
&&&& \ar[r] & A_{i-n+a}(T_K,m) \ar[r] & \varinjlim_Y{A_{i+a}(T_Y,m-1)}
}
$$
with exact rows, where the limit goes over all proper closed equidimensional subsets of $X$.
After inverting $d$ the first and fourth vertical map become isomorphisms and we conclude by noetherian induction.
\end{proof}

\begin{Corollary}
\label{CorAffineFibers}
 Let $T \to X$ be a flat morphism of quasi-projective schemes over $k$ with fibers being affine spaces of some dimension $n$. Then the pull-back
$A_*(X,m) \to A_{*+n}(T,m)$ is an isomorphism. 
\end{Corollary}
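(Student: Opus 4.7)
The plan is to deduce this from Lemma \ref{LeFiber} applied with $a = n$ and $d = 1$ (so no integer needs to be inverted). Since both $T$ and $X$ are quasi-projective over $k$ and $T\to X$ is flat, the hypotheses on the schemes in Lemma \ref{LeFiber} are met; we may also assume $X$ is equidimensional by breaking it into irreducible components and running the localization exact sequence as in the proof of Lemma \ref{LeFiber}.

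What remains is the fiberwise check. For each $x\in X$, the fiber $T_x$ is an affine space of dimension $n$ over $k(x)$, hence (possibly after trivializing an $\mathbb{A}^n$-torsor over a field) isomorphic to $\mathbb{A}^n_{k(x)} = \Spec k(x)\times \mathbb{A}^n$. By the homotopy invariance of higher Chow groups for trivial bundles, namely \cite[Theorem 2.1]{Bl} applied to $\Spec k(x)$, the flat pull-back
\[
A_{i-h(x)}(\Spec k(x),m) \;\longrightarrow\; A_{i-h(x)+n}(T_x,m)
\]
is an isomorphism for every $i$ and every $m\geq 0$. This is exactly the hypothesis of Lemma \ref{LeFiber} with $d=1$, so we conclude that $A_i(X,m) \to A_{i+n}(T,m)$ is an isomorphism for all $i$ and $m$.

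I do not expect any real obstacle here; the only slightly delicate point is making precise that ``fibers being affine spaces of dimension $n$'' means the fibers are isomorphic to $\mathbb{A}^n_{k(x)}$ (either by definition, or because over a field any such torsor is trivial), so that Bloch's homotopy theorem is directly applicable. Once this is granted, the corollary is an immediate specialization of Lemma \ref{LeFiber}.
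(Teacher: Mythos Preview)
Your proposal is correct and follows exactly the paper's approach: the paper's proof is the single line ``This is an immediate consequence of Lemma \ref{LeFiber},'' and you have simply spelled out the fiberwise verification via Bloch's homotopy invariance that makes this consequence immediate.
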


\begin{proof}
 This is an immediate consequence of Lemma \ref{LeFiber}.
\end{proof}

\begin{Remark}
 The assertion of the above corollary in the case $m=0$ also holds without the quasi-projective assumption. One can use the same proof but using Gillet's higher Chow groups.
For his higher Chow groups a long localization exact sequence exists for arbitrary schemes. For details see Chapter 8 in \cite{Gi}.
\end{Remark}

\begin{Lemma}
 \label{LeUniSubgp}
Let $K$ be a unipotent subgroup of an algebraic group $G$ such that the quotient $G/K$ is finite of degree $d$. Then the pull-back
$A^*_G(m) \to A^*_{\{0\}}(m)$ is an isomorphism after inverting $d$.
\end{Lemma}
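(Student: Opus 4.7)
The plan is to factor the pull-back through $A^*_K(m)$:
\[ A^*_G(m) \longrightarrow A^*_K(m) \longrightarrow A^*_{\{0\}}(m), \]
and to treat the two factors separately. A preliminary observation is that the paper's definition of unipotent forces $K$ to be connected (it is a successive extension of the connected group $\mathbb{G}_a$); together with $\dim K = \dim G$, which follows from $G/K$ being finite, this gives $K = G^0$. In particular $K$ is normal in $G$ and $H := G/K$ is a finite \'etale group scheme of order $d$. I then fix an admissible good pair $(V,U)$ for $G$; it is simultaneously good for $K$ and for the trivial group, so the three Chow groups in the diagram are realised on the mixed spaces $U/G$, $U/K$, and $U$ respectively.

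For the right-hand map, realised as the pull-back along the $K$-torsor $U \to U/K$: since $K$ is a successive extension of $\mathbb{G}_a$'s and each $\mathbb{G}_a$-torsor is Zariski locally trivial, the torsor $U \to U/K$ is Zariski locally an affine bundle of rank $\dim K$. Corollary \ref{CorAffineFibers}, applied inductively along the filtration of $K$, then shows this map is an isomorphism already integrally.

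For the left-hand map, realised as $p^* \colon A_*(U/G,m) \to A_*(U/K,m)$ along the finite \'etale Galois cover $p \colon U/K \to U/G$ of degree $d$ with Galois group $H$: the transfer $p_*$ satisfies $p_* p^* = d \cdot \id$ and $p^* p_* = \sum_{h \in H} h^*$, so after inverting $d$ the map $p^*$ identifies $A_*(U/G,m)[1/d]$ with $A_*(U/K,m)^H[1/d]$. To promote this to a genuine isomorphism I need to verify that $H$ acts trivially on $A_*(U/K,m)$. For this the previous paragraph furnishes exactly what is needed: combining the affine-bundle, localisation and homotopy isomorphisms, the structure map $\sigma \colon U/K \to \Spec k$ induces an isomorphism $\sigma^* \colon A_i(\Spec k, m) \to A_{i+\dim(U/K)}(U/K, m)$. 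Each $h \in H$ acts by an automorphism $\phi_h$ of $U/K$ over $\Spec k$, so $\phi_h^* \sigma^* = \sigma^*$, and surjectivity of $\sigma^*$ forces $\phi_h^* = \id$.

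The step I expect to be technically delicate is precisely this triviality of the $H$-action: Galois descent alone only yields invariants, and it is the unipotence of $K$ that ensures the invariants exhaust all of $A_*(U/K,m)$. Composing the two factors then yields the claim.
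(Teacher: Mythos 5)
Your factorization and its first half coincide with the paper's own proof: there too the restriction is factored through $U/K$, the unipotence of $K$ is used to see that $A_*(U/K,m)\to A_*(U,m)$ is an integral isomorphism (Corollary \ref{CorAffineFibers}), and injectivity after inverting $d$ comes from the finite, flat, surjective degree-$d$ map $U/K\to U/G$ via the transfer identity $p_*p^*=d\cdot\mathrm{id}$. Your incidental observations ($K=G^0$, hence normal with finite \'etale quotient) are correct but not needed for this. Where you diverge is surjectivity: the paper obtains it directly and integrally, without any group action, by noting that in the relevant range of degrees the flat pull-back $A_*(\Spec k,m)\to A_*(U,m)$ is surjective and factors through $A_*(U/G,m)$, since $U\to\Spec k$ factors through $U/G$.

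Your detour through Galois invariants is where the one genuine weak point sits. You treat $U/K\to U/G$ as a Galois cover with group $H=G/K$ and use $p^*p_*=\sum_{h\in H}h^*$ together with ``each $h\in H$ acts by a $k$-automorphism $\phi_h$ of $U/K$''. This tacitly assumes $H$ is a \emph{constant} group scheme. In the stated generality $H$ is only finite \'etale over $k$, and the lemma must hold in that generality: in the paper it is applied over arbitrary residue fields $k(x)$ (via Proposition \ref{PropExt1} and Corollaries \ref{CorExt1}, \ref{CorExt2}), where the component group need not be split. For non-constant $H$ the fibre product $U/K\times_{U/G}U/K$ is $H\times_k(U/K)$ rather than a disjoint union of $d$ copies, the displayed identity is not available over $k$, and there may be too few $k$-rational automorphisms for the invariants argument; passing to a splitting field would force inverting primes not dividing $d$. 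The repair is already contained in your own argument: you prove that $\sigma^*\colon A_i(\Spec k,m)\to A_{i+\dim(U/K)}(U/K,m)$ is an isomorphism in the relevant range, and since $\sigma$ factors through $p$, surjectivity of $\sigma^*$ gives surjectivity of $p^*$ integrally, with no transfer or action at all; combined with $p_*p^*=d\cdot\mathrm{id}$ this closes the proof and collapses your argument onto the paper's.
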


\begin{proof}
 Let $(V,U)$ be an admissible pair for $G$. Then $U/K \to U/G$ is a $G/K$-bundle locally trivial for the flat topology. By assumption on $G/K$ the morphism
$U/K \to U/G$ is therefore finite, flat and surjective of degree $d$. It follows that the pull-back $A_*(U/G,m) \to A_*(U/K,m) \cong A_*(U,m)$ is injective after 
inverting $d$. Also for sufficiently high degree we know that $A_*(\Spec k, m) \to A_*(U,m)$ is surjective. Since we can assume the codimension of $U^c$ in $V$
to be arbitrary high, we obtain the surjectivity of $A^*_G(m) \to A^*_{\{0\}}(m)$.
\end{proof}

\begin{Lemma}
 \label{LeGal}
Let $K/k$ be a Galois extension with Galois group $G$ and let $X$ be a scheme over $k$. Then pulling back along $X_K \to X$ induces
an isomorphism $A_*(X,m)_{\Q} \cong A_*(X_K,m)_{\Q}^G$. If $K/k$ is a finite Galois extension of degree $d$ it suffices to invert $d$.
\end{Lemma}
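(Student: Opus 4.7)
The plan is to first establish the finite Galois case by the standard pull-back/push-forward argument, and then pass to the infinite case by a direct limit argument.

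Suppose first that $K/k$ is finite Galois of degree $d$, and let $\pi \colon X_K \to X$ denote the projection. Since $K/k$ is finite separable, $\pi$ is a finite étale Galois cover of degree $d$ with group $G$. I will exploit the two standard identities $\pi_* \pi^* = d \cdot \id$ on $A_*(X,m)$ (which follows from the projection formula applied to the degree-$d$ cover) and $\pi^* \pi_* = \sum_{g \in G} g^*$ on $A_*(X_K,m)$ (the étale descent identity for Galois covers). After inverting $d$, the first identity shows that $\pi^*$ is split injective via $\tfrac{1}{d}\pi_*$. Since $\pi \circ g = \pi$ for every $g \in G$, the image of $\pi^*$ lies in the $G$-invariants. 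Conversely, if $\beta \in A_*(X_K,m)^G$ is $G$-invariant, then $\pi^* \pi_* \beta = \sum_g g^*\beta = d \cdot \beta$, so $\beta = \pi^*\bigl(\tfrac{1}{d}\pi_* \beta\bigr)$ lies in the image of $\pi^*$ after inverting $d$. This proves the finite case.

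For the infinite case, write $K = \varinjlim_\alpha K_\alpha$ as the filtered union of its finite Galois subextensions $K_\alpha/k$, with Galois groups $G_\alpha$, so that $G = \varprojlim G_\alpha$. Then $X_K = \varprojlim X_{K_\alpha}$ with affine flat transition maps, and higher Chow groups commute with such cofiltered limits, giving $A_*(X_K,m)_\Q = \varinjlim_\alpha A_*(X_{K_\alpha}, m)_\Q$. Each $A_*(X_{K_\alpha}, m)_\Q$ is $G$-stable with $G$ acting through $G_\alpha$, and for $\beta \ge \alpha$ the pull-back $A_*(X_{K_\alpha},m)_\Q \to A_*(X_{K_\beta}, m)_\Q$ is injective by the finite case applied to $K_\beta/K_\alpha$. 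A standard averaging argument (available over $\Q$) then shows that taking $G$-invariants commutes with the colimit, so combining with the finite case gives
$$A_*(X_K,m)_\Q^G = \varinjlim_\alpha A_*(X_{K_\alpha}, m)_\Q^{G_\alpha} = \varinjlim_\alpha A_*(X,m)_\Q = A_*(X,m)_\Q,$$
where the last equality holds because the system is constant in $\alpha$ via the compatible pull-backs.

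The main obstacle is verifying the two identities $\pi_*\pi^* = d\cdot\id$ and $\pi^*\pi_* = \sum_{g \in G} g^*$ at the level of higher Chow groups rather than ordinary ones. These follow by applying the familiar cycle-level versions of these identities to Bloch's cycle complexes $z_*(X, \bullet)$ and $z_*(X_K, \bullet)$ and invoking the compatibility of flat pull-back and proper push-forward with the face maps, but the argument needs to be made explicit. A secondary technical point is the commutation of higher Chow groups with cofiltered limits of schemes with affine flat transition maps, which must be invoked carefully to pass from the finite to the infinite Galois case.
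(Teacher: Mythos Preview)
Your argument is correct, and the infinite case is handled the same way as in the paper (direct limit over finite Galois subextensions, using the finite case at each stage). The finite case, however, is organized differently. The paper works at the level of Bloch's cycle complexes: it shows directly that the pull-back $z_*(X,\cdot)_d \to z_*(X_K,\cdot)^G_d$ is an isomorphism of complexes (injectivity from finite-flatness, surjectivity by producing a $k$-model $\tilde{V}$ of the $G$-orbit $\bigcup_{g \in G/S} gW$ of any subvariety $W$), and then invokes exactness of the functor $(-)^G$ on $\Z[1/d][G]$-modules to commute invariants past homology. You instead stay at the level of Chow groups and use the two identities $\pi_*\pi^* = d\cdot\id$ and $\pi^*\pi_* = \sum_{g} g^*$. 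Your route is the more familiar transfer argument and avoids the explicit cycle-by-cycle construction of $\tilde{V}$; the paper's route, on the other hand, gives a slightly stronger statement (an isomorphism of complexes, not just of their homology) and sidesteps the need to verify the second identity $\pi^*\pi_* = \sum_g g^*$ for higher Chow groups, which you correctly flag as the main point to check. Both approaches ultimately rely on the same finite-flat push-pull compatibility with Bloch's face maps.
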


\begin{proof}
 We first assume that $K/k$ is finite of degree $d$. Then on the level of cycles we have an injection $z_*(X,\cdot)_d \hookrightarrow z_*(X_K,\cdot)^G_d$ since
 $X_K \to X$ is finite, flat of degree $d$. We claim that this map is also surjective. Let $W \subset X_K \times_K \Delta^r_K$ be a subvariety meeting all
faces properly. Let $S \subset G$ be the isotropy group of $W$. It suffices to see that $\sum_{g \in G/S} [gW]$ lies in $z_*(X,\cdot)_d$. For this consider
the closed subscheme $V=\cup_{g \in G/S} gW$ (equipped with the reduced structure). Then $V$ is a $G$-invariant equidimensional subscheme of $X_K \times_K \Delta^r_K$ 
that meets all faces properly. Thus it has a model $\tilde{V}$ over $k$ also meeting all faces properly. Finally all components $gW$ have the same multiplicity $1$ 
in the cycle $[V]$ and therefore $\sum_{g \in G/S} [gW]= [\tilde{V}_K]$. To complete the proof in the finite case it suffices now to note that taking $G$-invariants
is an exact functor on the category of $\Z[\frac{1}{d}]$-modules with $G$-action, hence $H_i(z_*(X_K,\cdot)_d^G)=H_i(z_*(X_K,\cdot))_d^G$. 
The general case follows from the finite case and the fact that $A_*(X_K,m)^G=\varinjlim_{L/k} A_*(X_L,m)^{G(L/k)}$, where the limit goes over all 
finite Galois subextensions $L/k$ of $K$.
\end{proof}

\subsection{A Pull-Back Lemma}
Throughout we consider the situation of an exact sequence 
$$
\xymatrix{
0 \ar[r] & A \ar[r] & G \ar[r] & H \ar[r] & 0
}
$$
of algebraic groups and an admissible $H$-scheme $X$ such that the induced $G$-action on $X$ makes $X$ also into an admissible $G$-scheme.
These conditions are always satisfied if $X$ is quasi-projective and normal by Lemma \ref{LeAdm}.
We are then interested in properties of the pull-back homomorphism (Lemma \ref{Lepull-back})
$$
A_*([X/H],m) \to A_*([X/G],m).
$$

\begin{Proposition}
\label{PropExt1}
 Let 
$$
\xymatrix{
0 \ar[r] & A \ar[r] & G \ar[r] & H \ar[r] & 0
}
$$
be an exact sequence of algebraic groups and $X$ an admissible $H$-scheme such that the induced $G$-action makes $X$ also
into an admissible $G$-scheme. We also assume $H$ to be special. 

Let $d \in \mathbb{Z}$ such that $A^*_{A_L}(m) \to A^*_{\{0\}}(m)$ becomes an isomorphism after inverting $d$ for every field extension $L$ of $k$ and every $m$. Then
the pull-back $A_*([X/H],m) \to A_*([X/G],m)$ becomes an isomorphism after inverting $d$.
\end{Proposition}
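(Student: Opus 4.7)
The plan is to realize the structure morphism $[X/G] \to [X/H]$ at the level of suitably chosen mixed-space approximations as a Zariski-locally trivial fiber bundle whose fibers approximate $BA$, and then to invoke Lemma \ref{LeFiber}.

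Concretely, I would first pick admissible pairs $(V_H, U_H)$ for $H$ and $(V, U)$ for $G$, with the codimensions of the respective complements as large as needed for a given pair $(i, m)$ of indices. Since the $G$-action on $X$ factors through $H$ (so that $A$ acts trivially on $X$), the pair $(V \oplus V_H, U \times U_H)$ is still admissible for $G$, yielding a mixed space $T := (X \times U \times U_H)/G$ approximating $[X/G]$. The $G$-equivariant projection $(x, u, w) \mapsto (x, w)$ (where $G$ acts on the target through $G \to H$) descends to a morphism $\pi \colon T \to X_H := (X \times U_H)/H$ which realizes the flat pull-back along $[X/G] \to [X/H]$ in the sense of Lemma \ref{Lepull-back}.

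Next I would identify $\pi$ as a Zariski-locally trivial fiber bundle with fiber $U/A$. Since $A$ acts trivially on $X \times U_H$ but freely on $U$, quotienting by $A$ first gives $T \cong (X \times (U/A) \times U_H)/H$, exhibiting $\pi$ as the fiber bundle associated to the $H$-torsor $X \times U_H \to X_H$ and the $H$-space $U/A$. Because $H$ is special, this $H$-torsor is Zariski-locally trivial, hence so is $\pi$. In particular, $\pi$ is flat of relative dimension $a := \dim V - \dim A$, matching the relative dimension $-\dim A$ of $[X/G] \to [X/H]$ after the index shift coming from the representation $V$.

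Finally, I would apply Lemma \ref{LeFiber} to $\pi$. The fiber over a point $x \in X_H$ with residue field $L = k(x)$ is $(U/A)_L = U_L/A_L$, and $(V_L, U_L)$ is a good pair for $A_L$ with the same complement codimension. The hypothesis that $A^*_{A_L}(m) \to A^*_{\{0\}}(m)$ becomes an isomorphism after inverting $d$ then translates, via the standard identification $A^j_{A_L}(m) = A_{\dim V - \dim A - j}(U_L/A_L, m)$ valid up to any fixed codimension, precisely into the input required by Lemma \ref{LeFiber}, namely that $A_{i - h(x)}(\Spec L, m)_d \to A_{i - h(x) + a}((U/A)_L, m)_d$ is an isomorphism. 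Consequently $\pi^* \colon A_*(X_H, m)_d \to A_{*+a}(T, m)_d$ is an isomorphism, and via the definitions $A_i([X/H], m) = A_{i + \dim V_H}(X_H, m)$ and $A_i([X/G], m) = A_{i + \dim V + \dim V_H}(T, m)$ this gives the desired isomorphism after inverting $d$. The main obstacle I expect is the bookkeeping: carefully matching index shifts between codimension- and dimension-graded Chow groups and verifying that the codimensions of $U^c$ and $U_H^c$ can be taken simultaneously large enough that every fiber $(U/A)_L$ appearing in Lemma \ref{LeFiber} approximates $BA_L$ to the codimension the hypothesis requires.
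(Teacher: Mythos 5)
Your proposal is correct and follows essentially the same route as the paper: approximate $[X/G]\to[X/H]$ by mixed spaces, identify the resulting map as the Zariski-locally trivial $U/A$-bundle associated to the $H$-torsor (using that $H$ is special), and conclude via Lemma \ref{LeFiber} together with the identification of the fibers with approximations of $BA_L$. The only difference is presentational: the paper performs the reduction in two steps (first replacing $X$ by $X\times U$ so that $[X/H]$ becomes a quasi-projective scheme, then choosing a mixed space for $G$), whereas you combine both approximations into a single admissible pair, which amounts to the same construction.
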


\begin{proof}
First note that the natural map $[X/G] \to [X/H]$ is flat of relative dimension $-a$ with $a=\dim A$. 
We can choose for any $i \in \mathbb{Z}$ an admissible pair $(V,U)$ for the $H$-action such that 
$A_{j+l}([(X \times U)/G],m)=A_j([X/G],m)$ and $A_{j+l}((X \times U)/H,m)=A_j([X/H],m)$ for all $j>i$. 
Here $l$ denotes the dimension of $V$. Note that $X \times U$ is again an admissible $G$-scheme (cf. Remark \ref{RemAdm}). 
Replacing $X$ by $X \times U$ we may thus assume that $[X/H]$ 
is a quasi-projective scheme. 

Let now $(X \times U)/G$ be a quasi-projective mixed space for $G$.
Let $\bar{U}$ be the quotient $U/A$. Then we can identify $(X \times U)/G$ with the quotient $(X \times \bar{U})/H$ and under this 
identification the map $(X \times U)/G \to X/H$ corresponds to the 
$\bar{U}$-bundle $(X \times \bar{U})/H \to X/H$. It is Zariksi locally trivial since $H$ is special. 
We are left to show that the pull-back of this map is an isomorphism after inverting $d$. 
This will follow from Lemma \ref{LeFiber} once we have seen that the pull-back $A_{j-h(x)}(\Spec k(x),m)_d \to A_{j-h(x)+l-a}(\bar{U}_{k(x)},m)_d$ is an isomorphism
for every $x \in X/H$. Here $h(x)$ is the dimension of the closure of $\{x\}$ in $X/H$. Let us write $L=k(x)$. 
Assuming the codimension of $U^c$ in $V$ to be sufficiently large we obtain by assumption
$$
A_{j-h(x)}(\Spec L,m)_d=A_{j-h(x)+l}(U_L,m)_d=A_{j-h(x)+l-a}(\bar{U}_L,m)_d.
$$
For this recall $A_{j+l-a}(\bar{U}_L,m)=A^{A_L}_j(m)$ and $A_{j+l}(U_L,m)=A^{\{0\}}_j(m)$. This proves the claim.
\end{proof}

The above proposition applies to the following cases.
\begin{Corollary}
\label{CorExt1}
 In the situation of Proposition \ref{PropExt1} the following assertions hold.
\begin{equivlist}
 \item If $A$ is unipotent then $A_*([X/H],m) \to A_*([X/G],m)$ is an isomorphism.
 \item If $A$ is finite of degree $d$ then $A_*([X/H],m) \to A_*([X/G],m)$ becomes an isomorphism after inverting $d$.
\end{equivlist}
\end{Corollary}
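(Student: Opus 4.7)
The plan is to derive both statements as immediate applications of Proposition \ref{PropExt1}. The only thing to check in each case is the hypothesis that the pull-back $A^*_{A_L}(m) \to A^*_{\{0\}}(m)$ becomes an isomorphism after inverting $d$ for every field extension $L/k$ and every $m$; Lemma \ref{LeUniSubgp} is tailor-made to supply this input.

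For part (i), I would observe that unipotence is stable under base change, so that $A_L$ is unipotent for every field extension $L/k$. I then apply Lemma \ref{LeUniSubgp} to the algebraic group $A_L$ with the unipotent subgroup chosen to be $A_L$ itself; the quotient is then trivial, so its degree equals $1$. The lemma thus gives that $A^*_{A_L}(m) \to A^*_{\{0\}}(m)$ is an isomorphism (no inversion required), and Proposition \ref{PropExt1} applied with $d=1$ yields that $A_*([X/H],m) \to A_*([X/G],m)$ is an isomorphism.

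For part (ii), $A$ is finite of degree $d$, and finiteness together with the degree are preserved under base change, so $A_L$ is finite of degree $d$ for every field extension $L/k$. I apply Lemma \ref{LeUniSubgp} this time to $A_L$ with the unipotent subgroup chosen to be the trivial subgroup $\{0\}$, which is vacuously unipotent; the quotient $A_L/\{0\}=A_L$ is finite of degree $d$. The lemma therefore gives that $A^*_{A_L}(m) \to A^*_{\{0\}}(m)$ becomes an isomorphism after inverting $d$, and Proposition \ref{PropExt1} finishes the proof.

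There is essentially no obstacle here: both bullets are formal consequences of Proposition \ref{PropExt1} once Lemma \ref{LeUniSubgp} is applied in the two obvious ways (collapse the numerator, or collapse the denominator of the pair $(G,K)$ appearing in that lemma). The one minor point to be a little careful about is that the relevant hypotheses of the proposition and of Lemma \ref{LeUniSubgp} are formulated over arbitrary field extensions $L/k$, so I would just remark explicitly that unipotence in (i) and finiteness of degree $d$ in (ii) are both preserved under base change to $L$.
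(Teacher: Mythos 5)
Your proposal is correct and takes essentially the same route as the paper: both parts come down to verifying the hypothesis of Proposition \ref{PropExt1}, and for (ii) the paper does exactly what you do, namely apply Lemma \ref{LeUniSubgp} with $K=\{0\}$. For (i) the paper instead cites Corollary \ref{CorAffineFibers} directly, whereas you invoke the degenerate case $K=A_L$ (trivial quotient of degree $1$) of Lemma \ref{LeUniSubgp}; since that lemma's proof rests on the same corollary, this is only a cosmetic difference, and your remark that unipotence and finite degree are preserved under base change to $L$ is the right point to make explicit.
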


\begin{proof}
 The first part follows from Corollary \ref{CorAffineFibers} and the second part follows from Lemma \ref{LeUniSubgp} applied to the case $K=\{0\}$.
\end{proof}

The assumption on $H$ to be special is crucial for the proof of Proposition \ref{PropExt1}, since we need to know that the fibers of the $\bar{U}$-bundle 
$(X \times \bar{U})/H \to X/H$ appearing in the proof are given by $\bar{U}$ in order to apply Lemma \ref{LeFiber}. However, we have the following version when $H$ is finite.

\begin{Proposition}
\label{PropExt2}
 Let 
$$
\xymatrix{
0 \ar[r] & A \ar[r] & G \ar[r] & H \ar[r] & 0
}
$$
be an exact sequence of algebraic groups and $X$ an admissible $H$-scheme such that the induced $G$-action makes $X$ also
into an admissible $G$-scheme. We assume that $H$ is finite of degree $d$. 

Let $d' \in \mathbb{Z}$ such that $A^*_{A_L}(m) \to A^*_{\{0\}}(m)$ becomes an isomorphism after inverting $d'$ for every field extension $L$ of $k$ and any $m$. Then
the pull-back $A_*([X/H],m) \to A_*([X/G],m)$ becomes an isomorphism after inverting $dd'$.
\end{Proposition}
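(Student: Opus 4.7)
The plan is to follow the proof of Proposition \ref{PropExt1} as far as it goes and, in the one place where the speciality of $H$ is used, to replace that appeal by an application of Lemma \ref{LeCov}, exploiting the fact that $H$ is finite of degree $d$.

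First, I would run the reduction step from the proof of Proposition \ref{PropExt1} verbatim: by choosing an admissible pair $(V,U)$ for the $H$-action with $\codim(U^c,V)$ arbitrarily large and replacing $X$ by $X\times U$ (still admissible for $G$ by Remark \ref{RemAdm}), one reduces to the case where $[X/H]=X/H$ is a quasi-projective scheme on which $H$ acts freely. Then, picking a good pair $(V,U)$ for $G$ approximating $[X/G]$ up to high codimension and setting $\bar U=U/A$, the identification $(X\times U)/G=(X\times\bar U)/H$ gives a flat map
$$
p\colon (X\times\bar U)/H\longrightarrow X/H
$$
of relative dimension $l-a$, where $l=\dim V$ and $a=\dim A$. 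In Proposition \ref{PropExt1} the speciality of $H$ was used to conclude that $p$ is Zariski locally trivial; that step fails here.

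The key point instead is that since $H$ is smooth and finite of degree $d$ and acts freely on $X$, the quotient map $\pi\colon X\to X/H$ is a finite flat surjective morphism of degree $d$, and pulling $p$ back along $\pi$ (it is the $\bar U$-bundle associated to the $H$-torsor $\pi$) yields the trivial bundle $X\times\bar U\to X$. I would therefore apply Lemma \ref{LeCov} to the pair $(p,\pi)$. To supply its hypothesis, I would invoke Lemma \ref{LeFiber} on the projection $X\times\bar U\to X$: for $x\in X$ with residue field $L=k(x)$ the fiber is $\bar U_L$, and provided $\codim(U^c,V)$ is chosen large enough the assumption on $A$ yields
$$
A_{i-h(x)}(\Spec L,m)_{d'}=A_{i-h(x)+l}(U_L,m)_{d'}=A_{i-h(x)+l-a}(\bar U_L,m)_{d'},
$$
exactly as in the proof of Proposition \ref{PropExt1}. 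Hence $A_*(X,m)\to A_*(X\times\bar U,m)$ is an isomorphism after inverting $d'$; Lemma \ref{LeCov} then yields that $A_*(X/H,m)\to A_*((X\times\bar U)/H,m)$ is an isomorphism after inverting $dd'$, which under the standard identification with equivariant Chow groups is the desired statement.

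The main obstacle is conceptual rather than technical: one has to recognize that in the finite setting the failure of $H$ to be special is measured precisely by the degree-$d$ étale cover $\pi\colon X\to X/H$, so that the speciality hypothesis of Proposition \ref{PropExt1} can be traded for Lemma \ref{LeCov} at the acceptable cost of inverting $d$.
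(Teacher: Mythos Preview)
Your proposal is correct and follows essentially the same route as the paper: reduce as in Proposition \ref{PropExt1}, then trivialize the $\bar U$-bundle by the finite flat degree-$d$ base change $X\to X/H$ and conclude via Lemma \ref{LeCov}. You even spell out the verification of the hypothesis of Lemma \ref{LeCov} (via Lemma \ref{LeFiber} on the trivial bundle $X\times\bar U\to X$) a bit more explicitly than the paper does.
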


\begin{proof}
We argue the same way as in Proposition \ref{PropExt1} and then have to see that the pull-back of 
$(X \times \bar{U})/H \to X/H$ becomes an isomorphism after inverting $dd'$. As mentioned earlier we cannot apply Lemma \ref{LeFiber}
since the above $\bar{U}$-bundle is not locally trivial for the Zariski topology. Instead it becomes trivial after the finite, flat and surjective base change
$X \to X/H$ of degree $d$, i.e. there is a cartesian diagram
$$
\xymatrix{
X \times \bar{U} \ar[r] \ar[d] & X \ar[d] \\
(X \times \bar{U})/H \ar[r] & X/H.
}
$$
The claim thus follows from Lemma \ref{LeCov}.
\end{proof}

\begin{Corollary}
\label{CorExt2}
 In the situation of Proposition \ref{PropExt2} the following assertions hold.
\begin{equivlist}
 \item If $A$ is unipotent then $A_*([X/H],m)_d \to A_*([X/G],m)_d$ is an isomorphism.
 \item If $A$ is finite of degree $d'$ then $A_*([X/H],m)_{dd'} \to A_*([X/G],m)_{dd'}$ is an isomorphism.
\end{equivlist}
\end{Corollary}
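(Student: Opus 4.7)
The plan is to deduce both parts as immediate applications of Proposition~\ref{PropExt2}, the only work being to verify its hypothesis that $A^*_{A_L}(m) \to A^*_{\{0\}}(m)$ becomes an isomorphism after inverting a suitable integer $d'$, uniformly in the field extension $L/k$. Once $d'$ is identified, the proposition supplies the isomorphism after inverting $dd'$, matching the two claims.

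For part (i), I would observe that if $A$ is unipotent then so is $A_L$ for every field extension $L/k$. Applying Lemma~\ref{LeUniSubgp} with ambient group $A_L$ and unipotent subgroup $K = A_L$ itself, the quotient is trivial of degree $1$, so one obtains that $A^*_{A_L}(m) \to A^*_{\{0\}}(m)$ is an isomorphism with no integer inverted, i.e.\ one may take $d' = 1$. Proposition~\ref{PropExt2} then yields that $A_*([X/H],m)_d \to A_*([X/G],m)_d$ is an isomorphism, which is exactly the stated conclusion.

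For part (ii), if $A$ is finite of degree $d'$ then so is $A_L$. I would apply Lemma~\ref{LeUniSubgp} with $K = \{0\}$ the trivial (vacuously unipotent) subgroup of $A_L$; the quotient $A_L/\{0\} = A_L$ is finite of degree $d'$, so the lemma gives that $A^*_{A_L}(m) \to A^*_{\{0\}}(m)$ is an isomorphism after inverting $d'$, uniformly in $L$. Proposition~\ref{PropExt2} then provides the isomorphism $A_*([X/H],m)_{dd'} \to A_*([X/G],m)_{dd'}$, which is the claim.

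There is essentially no obstacle here beyond matching notation: the serious content has already been packaged into Lemma~\ref{LeUniSubgp} (which controls the Chow ring of the classifying stack of an extension of a finite group by a unipotent group by combining the affine bundle homotopy of Corollary~\ref{CorAffineFibers} with the finite flat covering argument of Lemma~\ref{LeCov}) and into Proposition~\ref{PropExt2} (which propagates this fiberwise information across the non-Zariski-locally-trivial $\bar{U}$-bundle $(X \times \bar{U})/H \to X/H$ via a further application of Lemma~\ref{LeCov} to the finite surjective cover $X \to X/H$).
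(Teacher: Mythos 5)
Your proposal is correct and matches the paper's (implicit) argument: the paper states this corollary without proof, and the proof is the same as for Corollary \ref{CorExt1}, namely verifying the hypothesis of Proposition \ref{PropExt2} fibrewise over all field extensions $L$ of $k$ and in particular using Lemma \ref{LeUniSubgp} with $K=\{0\}$ for part (ii). Your handling of part (i) via Lemma \ref{LeUniSubgp} with $K=A_L$ and quotient of degree $1$ is just a repackaging of the direct appeal to Corollary \ref{CorAffineFibers} (unipotent groups being special), so it is essentially the same argument.
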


In the next proposition we show that the assertion of Proposition \ref{PropExt1} is valid over $\Q$ for arbitrary $H$.

\begin{Proposition}
 \label{PropExtrat}
Let 
$$
\xymatrix{
0 \ar[r] & A \ar[r] & G \ar[r] & H \ar[r] & 0
}
$$
be an exact sequence of algebraic groups and $X$ an admissible $H$-scheme such that the induced $G$-action makes $X$ also
into an admissible $G$-scheme.  

Assume $A^*_{A_L}(m)_{\Q} \to A^*_{\{0\}}(m)_{\Q}$ is an isomorphism for every field extension $L$ of $k$ and any $m$. Then
the pull-back $A_*([X/H],m)_{\Q} \to A_*([X/G],m)_{\Q}$ is an isomorphism.
\end{Proposition}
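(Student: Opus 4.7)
The plan is to follow the proof of Proposition \ref{PropExt1} essentially verbatim, replacing the use of the specialness of $H$ by Galois descent (Lemma \ref{LeGal}) at the single step where it intervenes. After the same initial reduction---choosing an admissible pair $(V,U)$ for the $H$-action with $\codim(U^c,V)$ large and replacing $X$ by $X\times U$---one may assume $[X/H]=X/H$ is a quasi-projective scheme. Picking an admissible pair $(V',U')$ for $G$ and setting $\bar U=U'/A$, I am reduced, in a fixed range of codimensions, to showing that the flat pull-back
$$
A_*(X/H,m)_{\Q}\;\longrightarrow\;A_{*+\dim\bar U}\bigl((X\times\bar U)/H,m\bigr)_{\Q}
$$
is an isomorphism.

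The morphism $(X\times\bar U)/H\to X/H$ is an étale-locally trivial $\bar U$-bundle, since $X\to X/H$ is an $H$-torsor, and I apply Lemma \ref{LeFiber} rationally: it suffices to show that for every $x\in X/H$ with residue field $L=k(x)$, the pull-back $A_{j-h(x)}(\Spec L,m)_{\Q}\to A_{j-h(x)+\dim\bar U}(F_x,m)_{\Q}$ is an isomorphism, where $F_x$ denotes the scheme-theoretic fibre and $h(x)$ the dimension of $\overline{\{x\}}$ in $X/H$. The fibre $F_x$ is the twist of $\bar U_L$ by the $H_L$-torsor $X\times_{X/H}\Spec L$, which splits over some finite Galois extension $L'/L$; choosing a trivialisation yields an $L'$-isomorphism $F_x\times_L L'\cong \bar U_{L'}$ under which the standard Galois action on the left corresponds to an action on the right twisted by the cocycle of the torsor, with values in $H_{L'}\subset \Aut_{L'}(\bar U_{L'})$.

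The hypothesis of the proposition, reinterpreted for $\bar U$ as an approximation of $BA$, is precisely the statement that the flat pull-back $A_*(\Spec L',m)_{\Q}\to A_{*+\dim\bar U}(\bar U_{L'},m)_{\Q}$ is an isomorphism. This map is natural and hence equivariant with respect to the standard $\Gal(L'/L)$-action on $\Spec L'$ and the twisted $\Gal(L'/L)$-action on $\bar U_{L'}$, since any $H_{L'}$-automorphism of $\bar U_{L'}$ is $L'$-linear and therefore commutes with the projection to $\Spec L'$. Taking $\Gal(L'/L)$-invariants and applying Lemma \ref{LeGal} on both sides gives the required fibrewise isomorphism $A_*(\Spec L,m)_{\Q}\cong A_{*+\dim\bar U}(F_x,m)_{\Q}$, and Lemma \ref{LeFiber} concludes the argument.

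The main obstacle is the bookkeeping of the twisted Galois action in the descent step: one must verify that the non-canonical trivialisation of the $H_L$-torsor does not spoil the equivariance of the pull-back. As indicated, this reduces to the observation that $H$ acts on $\bar U$ by $k$-linear automorphisms over the base, so the twist affects only the fibres of $\bar U_{L'}\to\Spec L'$ and not the Galois action on the base; once this is in hand, the rest is a routine combination of Lemmas \ref{LeFiber} and \ref{LeGal} with the standing hypothesis.
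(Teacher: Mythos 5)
Your proposal is correct and follows essentially the same route as the paper: after the reduction from Proposition \ref{PropExt1}, the paper likewise notes the $\bar U$-bundle becomes $\bar U$ over a separable closure of each residue field and concludes by Lemma \ref{LeFiber} applied rationally together with Lemma \ref{LeGal} and the compatibility of the Galois action with pull-back. Your more explicit bookkeeping of the cocycle-twisted Galois action is just a spelled-out version of that same argument.
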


\begin{proof}
 Using the notation of the proof of Proposition \ref{PropExt1} we need to see that the pull-back of the $\bar{U}$-bundle $T:=(X \times \bar{U})/H \to X/H$ is an 
isomorphism over $\Q$. It suffices to see that $A_*(\Spec k(x),m)_{\Q} \to A_*(T_x,m)_{\Q}$ is an isomorphism for $x \in X/H$.
The above $\bar{U}$-bundle may not be trivial for the Zariski topology, but we still have $T_{\bar{x}}=\bar{U}_{\bar{x}}$ and thus
$A_*(\Spec k(x)^{sep},m)_{\Q} \to A_*(T_{\bar{x}},m)_{\Q}$ is an isomorphism by assumption. The claim then follows from Lemma \ref{LeGal} and the fact that the Galois action 
is compatible with pull-back.
\end{proof}

\begin{Corollary}
\label{CorExtrat}
 In the situation of Proposition \ref{PropExtrat} the following assertions hold.
\begin{equivlist}
 \item If $A$ is unipotent then $A_*([X/H],m)_{\Q} \to A_*([X/G],m)_{\Q}$ is an isomorphism.
 \item If $A$ is finite then $A_*([X/H],m)_{\Q} \to A_*([X/G],m)_{\Q}$ is an isomorphism.
\end{equivlist}
\end{Corollary}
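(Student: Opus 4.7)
The plan is to apply Proposition \ref{PropExtrat} in both cases, so what I need is to verify its hypothesis: that the pull-back $A^*_{A_L}(m)_{\Q} \to A^*_{\{0\}}(m)_{\Q}$ is an isomorphism for every field extension $L/k$ and every $m$. The required rational isomorphism is strictly weaker than what was already established in the proof of Corollary \ref{CorExt1}, so the arguments there can simply be quoted.

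For part (i), I would first observe that the base change $A_L$ remains unipotent, since the filtration of $A$ by subgroups with quotients $\G_a$ pulls back to a filtration of $A_L$ with the same successive quotients. Given any good pair $(V,U)$ for $A_L$, the principal bundle $U \to U/A_L$ is flat and quasi-projective, with fibres isomorphic to $\A^{\dim A}$; here one uses the $\G_a$-filtration to identify $A_L$ with $\A^{\dim A}$ as a $k$-scheme. Corollary \ref{CorAffineFibers} then gives that the pull-back $A^*_{A_L}(m) \to A^*_{\{0\}}(m)$ is already an isomorphism integrally, and hence in particular rationally.

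For part (ii), I would note that $A_L$ is finite of degree $d=|A|$ and apply Lemma \ref{LeUniSubgp} with the unipotent subgroup $K=\{0\}$ (the trivial group is vacuously unipotent). This gives that $A^*_{A_L}(m) \to A^*_{\{0\}}(m)$ becomes an isomorphism after inverting $d$; in particular, after passing to $\Q$-coefficients it is an isomorphism, which is precisely the hypothesis of Proposition \ref{PropExtrat}. With the hypothesis verified, the conclusion is immediate from that proposition.

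I do not anticipate a substantial obstacle here: the proof is essentially a direct specialisation of the arguments giving Corollary \ref{CorExt1}. The only points worth checking are that the relevant properties of $A_L$ (unipotent, respectively finite of the same degree) are preserved under base change to an arbitrary field extension $L$ — both are immediate — and that the quantitative statements used from Corollary \ref{CorAffineFibers} and Lemma \ref{LeUniSubgp} survive tensoring with $\Q$, which they trivially do.
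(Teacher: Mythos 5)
Your proposal is correct and follows exactly the route the paper intends: the corollary is a direct specialisation of Proposition \ref{PropExtrat}, with the hypothesis verified via Corollary \ref{CorAffineFibers} in the unipotent case and Lemma \ref{LeUniSubgp} (with $K=\{0\}$) in the finite case, precisely as in the paper's proof of Corollary \ref{CorExt1}. No gaps; the base-change remarks about $A_L$ are the only points needing mention and you handle them.
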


\begin{Lemma}
\label{LeUni2}
Let $G$ be a split extension 
 $$
 \xymatrix{
 0 \ar[r] & K \ar[r] & G \ar[r] & H \ar[r] & 0
 }
 $$
of an algebraic group $H$ by a unipotent group $K$. Choose a splitting $H \hookrightarrow G$ and let $X$ be a normal, quasi-projective $G$-scheme. 
Then the pull-back map
$$
A_*^G(X,m)_{\Q} \to A_*^H(X,m)_{\Q}
$$ 
is an isomorphism. If $G$ is special, this above map is an isomorphism over $\Z$.
\end{Lemma}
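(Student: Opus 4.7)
The plan is to reduce the comparison of $A_*^G(X,m)$ and $A_*^H(X,m)$ to homotopy invariance for an affine-space bundle, by comparing $[X/G]$ and $[X/H]$ through one and the same mixed-space approximation.

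First, by Lemma \ref{LeAdm} I would choose an admissible pair $(V,U)$ for $G$ with $\codim(U^c,V)$ arbitrarily large. Because the splitting exhibits $H$ as a closed subgroup of $G$ and $G$ acts freely on $U$, the restricted $H$-action on $U$ is also free, so $(V,U)$ is simultaneously a good pair for $H$; with the codimension of $U^c$ sufficiently large both $(X \times U)/G$ and $(X \times U)/H$ approximate $[X/G]$ and $[X/H]$ up to the relevant codimension. Under these identifications the map in question is exactly the flat pullback along the natural morphism
$$\pi \colon (X \times U)/H \to (X \times U)/G$$
of relative dimension $n=\dim K$, which is the bundle associated to the $G$-torsor $U \to U/G$ with fiber $G/H$. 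The splitting identifies $G/H$ with $K$ as a $k$-scheme, and because $K$ is unipotent, hence an iterated extension of $\G_a$'s, $K \cong \A^n$ as a scheme. Thus every geometric fiber of $\pi$ is an affine $n$-space.

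If $G$ is special, then $G$-torsors are Zariski locally trivial, and hence so is $\pi$; its fibers over arbitrary points of $(X \times U)/G$ are then genuine affine spaces over the residue fields. Corollary \ref{CorAffineFibers} applies directly and gives that $\pi^*$ is an isomorphism on higher Chow groups integrally, which settles the second assertion.

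For the general $\Q$-statement the fibers of $\pi$ may only be twists of $K$ and need not be isomorphic to $\A^n$ over the residue fields. Imitating the argument of Proposition \ref{PropExtrat}, I would apply Lemma \ref{LeFiber} and reduce to verifying that $A_*(\Spec k(x),m)_\Q \to A_*(\pi^{-1}(x),m)_\Q$ is an isomorphism for every point $x$. Over $\overline{k(x)}$ the fiber becomes $\A^n$, so Corollary \ref{CorAffineFibers} handles it there; Lemma \ref{LeGal}, together with the compatibility of the Galois action with flat pullback, then descends the isomorphism to $k(x)$ over $\Q$. The main obstacle is this potential twisting of the fibers of $\pi$ in the non-special case, which is precisely what forces the restriction to $\Q$-coefficients and is resolved by the Galois-descent argument as in Proposition \ref{PropExtrat}.
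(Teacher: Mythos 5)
Your argument is correct and is essentially the paper's own proof: one fixes an admissible pair that serves both $G$ and $H$, identifies the map with pull-back along the $G/H$-bundle $(X\times U)/H\to (X\times U)/G$ whose fiber $G/H\cong K$ is an affine space, and then applies Corollary \ref{CorAffineFibers} in the special (Zariski-locally trivial) case and Lemma \ref{LeFiber} together with the Galois descent of Lemma \ref{LeGal} for the rational statement. The only cosmetic differences are that the paper cites the proof of Lemma \ref{LeAdm} to guarantee the pair is admissible (not just good) for $H$, and trivializes the fiber over the separable rather than the algebraic closure before invoking Lemma \ref{LeGal}.
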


\begin{proof}
Let $(V,U)$ be an admissible pair for the $G$-action on $X$. It follows from the proof of Lemma \ref{LeAdm} that $(V,U)$ is then also admissible for the induced $H$-action.
The morphism $(X \times U)/H \to (X \times U)/G$ is a $G/H$-bundle. If $G$ is special this bundle is locally trivial for the Zariski topology.
Hence the lemma follows from Corollary \ref{CorAffineFibers} in the special case and Lemma \ref{LeGal} and \ref{LeFiber} in the general case.
\end{proof}

\subsection{The Restriction Map} We want to describe  properties of the restriction map $res^G_T \colon A^G_*(X) \to A^T_*(X)$, where
$T$ is a split torus in $G$. This map is defined via flat pull-back of the natural map $X_T \to X_G$  between the mixed spaces.  
Note that more generally one has a restriction map $res^G_H \colon A^G_*(X) \to A^H_*(X)$ for every subgroup $H$ of $G$. 
We will need the following result.

\begin{Theorem}
\label{ThInv}
Let $G$ be a connected reductive group with split maximal torus $T$ and Weyl group $W=W(G,T)$. Let $X$ be a $G$-scheme.
\begin{equivlist}
\item $W$ acts on $A^T_*(X)$. Furthermore, the restriction morphism
$A^G_*(X) \to A^T_*(X)$ induces a map $r \colon A^G_*(X) \to A^T_*(X)^W$. 
\item Assume $X$ is smooth. Then $r$ is an isomorphism after tensoring with $\mathbb{Q}$.
\end{equivlist}
\end{Theorem}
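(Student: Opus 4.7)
For part (i), I would fix a good pair $(V,U)$ for $G$; since $T \subseteq G$ it is automatically also a good pair for $T$. The normalizer $N := N_G(T)$ acts on $X \times U$ via the restriction of the $G$-action, and because $T$ is normal in $N$ this action descends to $X_T := (X \times U)/T$; since $T$ acts trivially on this quotient, we obtain an action of $W = N/T$ on $X_T$, and hence on $A^T_*(X)$ after the appropriate degree shift. Independence of $(V,U)$ would be verified by the usual double-mixing argument of \cite[Definition-Proposition 1]{EG2}, performed $N$-equivariantly. For the factorization statement, note that $X_T \to X_G$ factors as $X_T \to (X \times U)/N \to X_G$, and for any $n \in N$ left multiplication by $n$ on $X \times U$ covers the identity on $X_G$; hence the automorphism it induces on $X_T$ acts as the identity on the image of the flat pullback $A_*(X_G) \to A_*(X_T)$, so that $r$ lands in $A^T_*(X)^W$.

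For part (ii), let $B \supseteq T$ be a Borel subgroup. Since $B$ splits as $T \ltimes U_B$ with $U_B$ unipotent, Lemma \ref{LeUni2} yields an isomorphism $A^B_*(X)_\mathbb{Q} \cong A^T_*(X)_\mathbb{Q}$. Via this identification it suffices to show that $A^G_*(X)_\mathbb{Q} \to A^B_*(X)_\mathbb{Q}$ is an isomorphism onto the preimage of $A^T_*(X)^W_\mathbb{Q}$. The mixed-space map $X_B \to X_G$ is an étale-locally trivial $G/B$-bundle. Each character $\chi \in \hat{T}$ extends uniquely to $B$ (as $U_B$ admits no characters) and yields a line bundle $\mathcal{L}_\chi$ on $[X/B]$ whose Chern class $c_1(\mathcal{L}_\chi) \in A^1_B(X)$ restricts on each fiber $G/B$ to the standard class appearing in the Borel presentation of $A^*(G/B)$. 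Choosing a chain of parabolics $B = P_0 \subset P_1 \subset \cdots \subset P_r = G$ with successive quotients isomorphic to $\mathbb{P}^1$ presents $X_B \to X_G$ as an iterated $\mathbb{P}^1$-bundle, and iterating the projective bundle formula produces an isomorphism
$$ A^B_*(X)_\mathbb{Q} \;\cong\; A^G_*(X)_\mathbb{Q} \otimes_\mathbb{Q} A^*(G/B)_\mathbb{Q}. $$
The $W$-action transported from $A^T_*(X)_\mathbb{Q}$ is trivial on the first factor and coincides with the natural Weyl-group action on $A^*(G/B)_\mathbb{Q}$ on the second. By Chevalley's theorem, $A^*(G/B)_\mathbb{Q}$ is the regular representation of $W$ with invariants concentrated in degree $0$ and equal to $\mathbb{Q}$; therefore $(A^B_*(X)_\mathbb{Q})^W = A^G_*(X)_\mathbb{Q} \otimes 1$, which is exactly the image of the restriction.

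The main technical hurdle is the Leray-Hirsch step: one must justify the iterated projective-bundle decomposition for the equivariant $G/B$-bundle $X_B \to X_G$ (which comes down to verifying that each successive stage $X_{P_{i-1}} \to X_{P_i}$ really is a Zariski-locally trivial $\mathbb{P}^1$-bundle on the mixed spaces) and, critically, identify the transported $W$-action on the right-hand tensor factor with the natural Weyl-group action on $A^*(G/B)_\mathbb{Q}$. This identification amounts to tracking how the line bundles $\mathcal{L}_\chi$ transform under conjugation by representatives of $W$ in $N_G(T)$, which is a direct calculation once the decomposition is set up. After that, Chevalley's description of the coinvariant algebra as the regular representation completes the proof immediately.
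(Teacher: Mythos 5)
Your part (i) is essentially the paper's argument: the $W=N_G(T)/T$-action is produced exactly as you say, by letting $N_G(T)$ act on the mixed space and observing that translation by $n\in N_G(T)$ covers the identity on $X_G$, so that the restriction lands in the invariants (this is the first half of the proof of Lemma \ref{LeG/B-Bundle}).

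Part (ii), however, rests on a decomposition that does not exist. You want a chain of parabolics $B=P_0\subset P_1\subset\cdots\subset P_r=G$ with every successive quotient $P_i/P_{i-1}\cong\mathbb{P}^1$, so that $X_B\to X_G$ becomes an iterated $\mathbb{P}^1$-bundle. Already for $G=\SL_3$ the only intermediate parabolics are the two maximal ones, so any chain from $B$ to $G$ has a step whose quotient is $\mathbb{P}^2$; for general reductive $G$ the quotients $P_i/P_{i-1}$ are partial flag varieties of Levi subgroups (Grassmannians, quadrics, exceptional homogeneous spaces), not projective spaces, so there is nothing to which the projective bundle formula can be iteratively applied. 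Moreover, even at a step whose fiber happens to be a projective space, the bundle on mixed spaces is only \'etale-locally (isotrivially) trivial, because the $G$-torsor $X\times U\to X_G$ is not Zariski-locally trivial for non-special $G$; producing a Zariski-locally trivial structure or a presentation as the projectivization of a vector bundle is precisely what is unavailable, and it is exactly at this point that rational coefficients must do real work rather than bookkeeping. The paper's proof of surjectivity goes differently: it uses local isotriviality of torsors (Raynaud), noetherian induction with the localization sequence, and the rational surjectivity of proper push-forward along proper surjective maps (Kimura) to reduce to the case of a trivial torsor, where $A_*(E/B)_{\Q}=A_*(M)_{\Q}\otimes A_*(G/B)_{\Q}$ and Demazure's computation $A_*(G/B)_{\Q}=S_{\Q}/(S_+^W)$, whose $W$-invariants are $\Q$, conclude; injectivity then comes from properness and surjectivity of $E/B\to M$ together with smoothness of $M$, not from a freeness statement. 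A secondary gap: even granting a module isomorphism $A^B_*(X)_{\Q}\cong A^G_*(X)_{\Q}\otimes A^*(G/B)_{\Q}$, your identification of the transported $W$-action as trivial on the first factor and standard on the second requires choosing the splitting $W$-equivariantly (e.g.\ via the $W$-equivariant characteristic map $\Sym(\hat T)_{\Q}\to A^*_B(X)_{\Q}$); this is repairable, but as written it is asserted rather than proved. If you insist on a Leray--Hirsch-style argument you must either restrict to special $G$ (where $X\to X_G$ is Zariski-locally trivial and Bruhat cells give a basis) or introduce a transfer/proper push-forward argument, at which point you have essentially reconstructed the paper's proof.
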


Part (iii) is basically proved in \cite{EG}, where Edidin and Graham consider the case $X=\Spec k$.
However, there seems to be no complete proof of part (ii) in the literature. We therefore give a proof. 

In the following $A^*(X;\Q)$ will denote the operational Chow ring of $X$ consisting
of characteristic classes with values in rational Chow groups, i.e. an element $c \in A^*(X;\Q)$ assigns
to each $T \to X$ a morphism 
$$c(T \to X)\colon A_*(T)_{\Q} \to A_*(T)_{\Q}$$
satisfying the usual compatibility conditions (\cite[Section 17.1]{Fu}). 
A proper map $\pi \colon \tilde{X} \to X$ is called an envelope if for each irreducible subspace $V \subset X$ there
exists an irreducible subspace $\tilde{V} \subset \tilde{X}$ sucht $\pi$ maps $\tilde{V}$ birationally onto $V$.

\begin{Remark}
 There is a natural map $A^*(X)_{\Q} \to A^*(X;\Q)$ and this map is an isomorphism if $X$ is smooth. This follows from
 $$
 \xymatrix{
 A^*(X)_{\Q} \ar[r]_{\cong}^{\cap[X]} \ar[d] & A_*(X)_{\Q} \ar@{=}[d] \\
 A^*(X;\Q) \ar[r]_{\cong}^{\cap [X]} & A_*(X)_{\Q}.
 }
 $$
\end{Remark}

We recall the following easy lemma.

\begin{Lemma}
\label{Lepullbackenvelope}
 \begin{equivlist}
  \item Let $\pi \colon \tilde{X} \to X$ be a proper surjective map. Then $\pi_* \colon A_*(\tilde{X})_{\Q} \to A_*(X)_{\Q}$ is surjective
  and $\pi^* \colon A^*(X;\Q) \to A^*(\tilde{X};\Q)$ is injective.
  \item Let $\pi \colon \tilde{X} \to X$ be a birational envelope. Then $\pi_* \colon A_*(\tilde{X}) \to A_*(X)$ is surjective
  and $\pi^* \colon A^*(X) \to A^*(\tilde{X})$ is injective.
 \end{equivlist}
\end{Lemma}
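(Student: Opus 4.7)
The plan is to treat both parts via the same two-step template: first establish surjectivity of $\pi_*$ directly from the hypothesis, then derive injectivity of $\pi^*$ as a formal consequence via the projection formula after base change. Throughout, I use the standard fact that an operational class vanishes iff all its cap-product operations $c(f)\cap -$ vanish.

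For surjectivity of $\pi_*$, let $V\subseteq X$ be a subvariety with generic point $\eta_V$. In case (i), surjectivity of $\pi$ ensures that the fiber $\pi^{-1}(\eta_V)$ is a non-empty scheme of finite type over $k(\eta_V)$; picking any closed point $\tilde\eta$ of this fiber yields a point of $\tilde X$ with $\pi(\tilde\eta)=\eta_V$ and $[k(\tilde\eta):k(\eta_V)]=d$ for some $d\ge 1$. Since $\pi$ is proper (hence closed), the closure $\tilde V$ of $\tilde\eta$ in $\tilde X$ satisfies $\pi(\tilde V)=V$, and $\tilde V\to V$ is generically finite of degree $d$, so $\pi_*[\tilde V]=d[V]$. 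Inverting $d$ in rational Chow groups gives the required preimage of $[V]$. In case (ii) the envelope hypothesis directly supplies a $\tilde V$ mapping birationally onto $V$, so $\pi_*[\tilde V]=[V]$ integrally. In either case, since the relevant $A_*$ is generated by cycles of subvarieties, $\pi_*$ is surjective.

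For injectivity of $\pi^*$, assume $\pi^*c=0$. Given any morphism $f\colon Y\to X$ and any class $\alpha$ in the appropriate Chow group of $Y$, form the cartesian square
\[
\xymatrix{
\tilde Y \ar[r]^{\tilde f} \ar[d]_{\pi_Y} & \tilde X \ar[d]^{\pi} \\
Y \ar[r]_{f} & X.
}
\]
In case (i) the base change $\pi_Y$ is again proper and surjective, and in case (ii) it is again a birational envelope; in both cases the first step applies and produces $\tilde\alpha$ with $\pi_{Y*}\tilde\alpha=\alpha$. The defining compatibility of operational classes with composition gives $c(f\circ\pi_Y)=(\pi^*c)(\tilde f)=0$, so the projection formula yields
\[
c(f)\cap\alpha \;=\; c(f)\cap\pi_{Y*}\tilde\alpha \;=\; \pi_{Y*}\bigl(c(f\circ\pi_Y)\cap\tilde\alpha\bigr) \;=\; 0,
\]
which forces $c=0$.

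The main subtlety is the base-change step in (ii): one needs the base change of a birational envelope to be a birational envelope. This is the standard envelope lemma (see Fulton, \S 18.3): for each subvariety $W\subseteq Y$ one applies the envelope property of $\pi$ to the closure of $f(\eta_W)$ in $X$ to obtain a birational lift, and then extracts the component of its base change to $W$ that dominates $W$ birationally, using that $W\to\overline{f(\eta_W)}$ is dominant at the generic point. Apart from this point the lemma is entirely a formal consequence of the projection formula and the compatibilities built into operational Chow theory.
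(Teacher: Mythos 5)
Your proof is correct and follows essentially the same route as the paper: the paper cites Kimura's Proposition 1.3 for the rational surjectivity in (i), gets the surjectivity in (ii) directly from the definition of an envelope, and declares the injectivity statements ``formal consequences'' of the surjectivity statements---which is exactly the base-change/projection-formula argument you spell out. The only small imprecision is that the base change of a \emph{birational} envelope need not be birational, but since your argument only uses the envelope property of $\pi_Y$ (to get surjectivity of $\pi_{Y*}$), nothing is affected.
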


\begin{proof}
 The first part of (i) is \cite[Proposition 1.3]{Ki}. The first part of (ii) follows immediately from the definition of an envelope.
 The second part of (i) and (ii) are formal consequences of their first parts. 
\end{proof}

\begin{Lemma}
 \label{LeG/B-Bundle}
 Let $G$ be a connected reductive group with split maximal torus $T$ and Weyl group $W=W(G,T)$. 
 Let $M$ be smooth and $E \to M$ be a principal $G$-bundle. Consider a Borel subgroup $B \supset T$.
 Then $W$ acts on $A^*(E/B)$ and pull-back induces an isomorphism $A^*(M)_{\Q} \cong A^*(E/B)^W_{\Q}$.
\end{Lemma}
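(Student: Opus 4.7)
The plan is to first define the $W$-action by transfer and observe that pull-back factors through invariants, then establish the main isomorphism via the flag-bundle formula. For the $W$-action: since $T$ is normal in $N_G(T)$, the quotient $W = N_G(T)/T$ acts on $E/T$ by right multiplication, and this action is free because $E \to M$ is a principal $G$-bundle. The natural projection $E/T \to E/B$ is a Zariski-locally trivial torsor under the unipotent radical $U = B/T$, so pull-back induces an isomorphism $A^*(E/B) \iso A^*(E/T)$ by the affine-bundle homotopy invariance for operational Chow rings of smooth schemes (the operational analog of Corollary \ref{CorAffineFibers}). Transporting the $W$-action along this isomorphism defines the $W$-action on $A^*(E/B)$, and because the composite $E/T \to M = E/G$ is constant on $W$-orbits, pull-back from $A^*(M)$ lands automatically in $A^*(E/B)^W$.

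For the isomorphism $A^*(M)_\Q \cong A^*(E/B)^W_\Q$ I would invoke the flag-bundle / Leray-Hirsch description of $A^*(E/B)_\Q$ as an $A^*(M)_\Q$-algebra. Each character $\chi \in \hat{T}$, extended trivially from $T$ to $B$, yields a line bundle $\mathcal{L}_\chi = E \times^B k_\chi$ on $E/B$ and hence a class $c_1(\mathcal{L}_\chi) \in A^1(E/B)$; combined with pull-back from $A^*(M)_\Q$, one obtains a natural ring homomorphism
\[ A^*(M)_\Q \otimes_{\Sym(\hat{T})^W_\Q} \Sym(\hat{T})_\Q \longrightarrow A^*(E/B)_\Q, \]
where the map $\Sym(\hat{T})^W_\Q \to A^*(M)_\Q$ is the classifying map of $E$. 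The flag-bundle formula asserts that this map is an isomorphism; it is the relative version of Borel's theorem $A^*_G(\mathrm{pt})_\Q = \Sym(\hat{T})^W_\Q$.

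Granting this, taking $W$-invariants finishes the proof: since $\Sym(\hat{T})_\Q$ is free over $\Sym(\hat{T})^W_\Q$ by Chevalley-Shephard-Todd,
\[ A^*(E/B)^W_\Q \cong A^*(M)_\Q \otimes_{\Sym(\hat{T})^W_\Q} \Sym(\hat{T})^W_\Q = A^*(M)_\Q. \]
The main obstacle will be the flag-bundle isomorphism displayed above. One way to establish it is to reduce to Borel's theorem via a classifying-space argument using admissible pairs $(V,U)$ for $G$, bootstrapping from the Edidin-Graham computation for $X = \Spec k$; alternatively, one can argue directly from the cellular (Bruhat) decomposition of $G/B$, producing a Leray-Hirsch basis of $A^*(E/B)$ as a free $A^*(M)$-module of rank $|W|$ and then identifying the $W$-action on this basis with the regular representation of $W$.
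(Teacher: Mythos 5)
Your definition of the $W$-action (via $A^*(E/B)\cong A^*(E/T)$ and right multiplication by $N_G(T)/T$) and the observation that pull-back from $M=E/G$ lands in the invariants are exactly as in the paper, and your reduction from the displayed flag-bundle formula to the statement of the lemma (taking $W$-invariants, using freeness of $\Sym(\hat{T})_{\Q}$ over $\Sym(\hat{T})^W_{\Q}$ and the averaging idempotent) is correct. The problem is that the entire content of the lemma has been shifted into the unproved assertion $A^*(E/B)_{\Q}\cong A^*(M)_{\Q}\otimes_{\Sym(\hat{T})^W_{\Q}}\Sym(\hat{T})_{\Q}$, which you yourself flag as ``the main obstacle''. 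This relative statement is at least as strong as the lemma; indeed the paper points out that Edidin--Graham only treat the case of a point and that no complete proof of the relative invariant-theoretic statement seems to exist in the literature --- supplying one is precisely the purpose of Lemma \ref{LeG/B-Bundle}. So as it stands the proposal is circular in effect: it reduces the lemma to a stronger claim whose proof is only sketched.

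Moreover, the two sketched strategies do not close the gap. The Bruhat-cell/Leray--Hirsch route presupposes that one can trivialize $E/B\to M$ Zariski-locally and patch with localization sequences; but for a general connected reductive $G$ (e.g.\ $\mathrm{PGL}_n$, $\mathrm{SO}_n$) a $G$-torsor is only locally isotrivial (Raynaud), not Zariski-locally trivial, and this is exactly the difficulty. The paper's proof handles it by noetherian induction over opens with a finite \'etale trivializing cover, using that proper push-forward is rationally surjective for proper surjective maps (Lemma \ref{Lepullbackenvelope}) to descend from a cover where the torsor is trivial, and only then invoking the cellular decomposition of $G/B$ together with Demazure's computation $A_*(G/B)_{\Q}=S_{\Q}/(S^W_+)$ and $(S_{\Q}/(S^W_+))^W=\Q$; injectivity is obtained separately from properness and surjectivity of $E/B\to M$ plus smoothness of $M$ (to identify $A^*(M)_{\Q}$ with $A^*(M;\Q)$). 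Also, in your cellular variant the claim that the $W$-action is ``the regular representation on the basis'' is not right as stated: $W$ does not permute a Schubert-type basis; rationally the coinvariant algebra is abstractly the regular representation (Chevalley), which is a further theorem, not a bookkeeping identification. The classifying-space/admissible-pair bootstrap is likewise not developed: passing from $A^*_G(\mathrm{pt})_{\Q}=\Sym(\hat{T})^W_{\Q}$ to an arbitrary torsor over an arbitrary smooth base requires precisely the kind of isotriviality-plus-descent argument above. So the proposal is incomplete until the flag-bundle isomorphism (or directly the surjectivity and injectivity of $A^*(M)_{\Q}\to A^*(E/B)^W_{\Q}$) is actually proved.
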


\begin{Remark}
 This lemma is also mentioned (without proof) in \cite[Section 2.5]{Vi}.
\end{Remark}

\begin{proof}
  We identify $W=N_G(T)/T$ and choose $w \in N_G(T)$. Then $w$ induces an automorphism $w \colon E/T \to E/T$. 
This defines an action of $W$ on $A^*(E/T)=A^*(E/B)$. Since $w$ lies in $G$
the diagram 
$$
\xymatrix{
E/T \ar[r] \ar[d]_w & E/G=M  \\
E/T \ar[ur]
}
$$
commutes and this implies that the image of the pull-back $A^*(M) \to A^*(E/B)$ lies in $A^*(E/B)^W$. We are left to show that
$$A^*(M)_{\Q} \to A^*(E/B)^W_{\Q}$$
is an isomorphism. 
Let us first show that $A_*(M)_{\Q} \to A_*(E/B)^W_{\Q}$ is surjective. For this the smoothness assumption on $M$ is not needed.
We recall that every $G$-torsor is locally isotrivial by \cite[XIV Lemma 1.4]{Ra}. This means that there exists a covering of $M$ by open subsets $U$
with the property that for each $U$ there is a finite, etale and surjective map $U' \to U$ such that $E_{U'}=E \times_M U' \to U'$ becomes a trivial $G$-torsor.
Let $V$ denote the complement of such an $U$ in $M$ and consider the commutative diagram
$$
\xymatrix{
A_*(V)_{Q} \ar[r] \ar[d] & A_*(M)_{\Q} \ar[r] \ar[d] & A_*(U)_{\Q} \ar[r] \ar[d] & 0 \\
A_*(E_V/B)^W_{\Q} \ar[r] & A_*(E/B)^W_{\Q} \ar[r] & A_*(E_U/B)^W_{\Q} \ar[r]  & 0 
}
$$
with exact rows. An easy diagram chase shows that if the first and last vertical map are surjective so is $A^*(M)_{\Q} \to A^*(E/B)^W_{\Q}$.
Using noetherian induction we are thus reduced to the case that there exists a proper surjective map $M' \to M$ such that
$E_{M'} \to M'$ is trivial. Since the diagramm
$$
\xymatrix{
A_*(M')_{\Q} \ar[r] \ar[d] & A_*(E_{M'}/B)^W_{\Q} \ar[d] \\
A_*(M)_{\Q} \ar[r] & A_*(E/B)^W_{\Q}
}
$$
commutes (\cite[Proposition 1.7]{Fu}) and since $A_*(E_{M'}/B)^W_{\Q} \to A_*(E/B)^W_{\Q}$ is surjective by part (i) of the previous lemma we are further reduced to the case
of a trivial $G$-torsor $E=G \times M \to M$.
Since $G/B$ has a decomposition into affine cells we obtain in this case $A_*(E/B)_{\Q}=A_*(G/B)_{\Q} \otimes A_*(M)_{\Q}$.
From \cite[Section 8]{De} we get $A_*(G/B)_{\Q}=S_{\Q}/(S_+^W)$, where $S=\Sym(\hat{T})$ and $S_+^W$ denotes the submodule generated by homogeneous $W$-invariant
elements of positive degree. Since $(S_{\Q}/(S^W_+))^W = \Q$ we obtain $A_*(E/B)^W_{\Q}=A_*(M)$ as wanted. 

By the previous lemma we know that $A^*(M;\Q) \to A^*(E/B;\Q)$ is injective but since $M$ (and therefore $E$) is smooth we obtain the injectivity of
$A^*(M)_{\Q} \to A^*(E/B)_{\Q}$.
\end{proof}

\begin{proof}(of Theorem \ref{ThInv})
The assertion (i) and (ii) are immediate consequences of Lemma \ref{LeG/B-Bundle}. 
Under the assumption that $A^*_T(X)$ is $\Z$-torsion free the surjectivity of $r$ follows from part (ii) 
by using the argumentation of the proof of Lemma 5 in \cite{EG}. 
\end{proof}

\section{The Chow Ring of the Stack of level-$n$ Barsotti-Tate Groups}
\subsection{The Stack of truncated Displays}
Let $R$ be an $\mathbb{F}_p$-algebra. We denote by $W_n(R)$  the ring of truncated Witt vectors of length $n$. Let $I_{n,R} \subset W_n(R)$
be the image of the Verschiebung $W_{n-1}(R) \to W_n(R)$ and $J_{n,R} \subset W_n(R)$ be the kernel of the projection
$W_n(R) \to W_{n-1}(R)$. The Frobenius on $R$ induces a ring homomorphism $\sigma \colon W_{n}(R) \to W_n(R)$ and
the inverse of the Verschiebung induces a bijective $\sigma$-linear map $\sigma_1 \colon I_{n+1,R} \to W_n(R)$.
Note that $pR=0$ implies $I_{n,R}J_{n,R}=0$, hence we may view $I_{n+1,R}$ as a $W_n(R)$-module. 

Truncated displays were introduced in \cite{La}. Let us recall the necessary notations.
We are only going to need the following description of truncated displays.
\begin{Definition}
 A truncated display of level $n$ over an $\mathbb{F}_p$-algebra $R$ is a triple $(L,T,\Psi)$ consisting
of projective $W_n(R)$-modules $L$ and $T$ of finite rank and a $\sigma$-linear automorphism $\Psi  \colon L \oplus T \to L \oplus T$.
\end{Definition}
A morphism between truncated displays is defined as follows. First we can use $\Psi$ to define $\sigma$-linear maps
$$F \colon L \oplus T \to L \oplus T, \quad l+t \mapsto p\Psi(l)+\Psi(t),$$
$$F_1 \colon L \oplus (T \otimes_{W_n(R)} I_{n+1,R}) \to L \oplus T, \quad l+(t \otimes \omega) \mapsto \Psi(l)+\sigma_1(\omega)\Psi(t).$$
Then a morphism between two truncated displays $(L,T,\Psi)$ and $(L',T',\Psi')$ of level $n$ is given by a matrix 
$
\left(
  \begin{array}{ c c }
     A & B \\
     C & D
  \end{array} \right),
$
where $A \in \Hom(L,L')$, $B \in \Hom(T,L')$, $C \in  \Hom(L,T' \otimes_{W_n(R)} I_{n+1,R})$ and $D \in \Hom(T,T')$
such that 
$$
\xymatrix{ L \oplus T \ar[r]^{F} \ar[d] & L \oplus T \ar[d] \\ L' \oplus T' \ar[r]^{F'} & L' \oplus T'}
\hspace{0.3in}
\xymatrix{ L \oplus (T \otimes_{W_n(R)} I_{n+1,R}) \ar[r]^-{F_1} \ar[d] & L \oplus T \ar[d] \\ 
L' \oplus (T' \otimes_{W_n(R)} I_{n+1,R}) \ar[r]^-{F_1'} & L' \oplus T'}
$$
commute. 

The height of a truncated display is defined as the rank of $L \oplus T$ and the dimension as the rank of $T$. 
Both are locally constant functions on $\Spec R$.
Let $\disp_n \to \Spec \mathbb{F}_p$ denote the stack of truncated displays of level $n$. That is for 
$R$ an $\mathbb{F}_p$-algebra $\disp_n(\Spec R)$ is the groupoid of truncated displays of level $n$. 
It is proved in \cite[Proposition 3.15]{La} that $\disp_n$ is a smooth Artin algebraic stack of dimension 
zero over $\mathbb{F}_p$ with affine diagonal. 

For $h \in \mathbb{N}$ and $0 \leq d \leq h$ we denote by $\disp^{h,d}_n$ the open and closed substack
of truncated displays of level $n$ with constant height $h$ and constant dimension $d$. Then 
$$ \disp_n=\coprod_{h,d} \disp^{h,d}_n. $$

\textit{A Presentation of $\disp_n^{h,d}$.} We will adopt the notation
of the proof of Proposition 3.15 in \cite{La}. Let $X_n^{h,d}$ be the functor on affine $\mathbb{F}_p$-schemes
with $X_n^{h,d}(R)=\GL_h(W_n(R))$. This is an affine open subscheme of $\mathbb{A}^{nh^2}$. Furthermore, let 
$G_n^{h,d}$ be the functor such that $G_n^{h,d}(R)$ is the group of invertible matrices 
$\left(
  \begin{array}{ c c }
     A & B \\
     C & D
  \end{array} \right)
$
with $A \in \GL_{h-d}(W_n(R))$, $B \in \Hom(W_n(R)^d,W_n(R)^{h-d})$, $C \in \Hom(W_n(R)^{h-d},I_{n+1,R}^d)$ and $T \in \GL_d(W_n(R))$.
Then $G_n^{h,d}$ is a connected algebraic group of dimension $nh^2$. 

\begin{Remark}
 \label{RemG_1^{h,d}}
Since $I_{2,R}$ is in bijection to $R$ via $\sigma_1$ we may view $G_1^{h,d}(R)$ as the group of invertible matrices with entries in $R$ with respect to the
multiplication given by 
$$
\left(
  \begin{array}{ c c }
     A & B \\
     C & D
  \end{array} \right)
\left(
  \begin{array}{ c c }
     A' & B' \\
     C' & D'
  \end{array} \right)
=
\left(
  \begin{array}{ c c }
     AA' & AB'+BD' \\
     C\sigma(A')+\sigma(D)C' & DD'
  \end{array} \right),
$$
where in the four blocks we have the usual matrix multiplication.  
\end{Remark}

Let  $\pi_n^{h,d} \colon X_n^{h,d} \to  \mathcal{D}$isp$_{n,d}$ be the functor that assigns to an invertible matrix 
$\Psi \in \GL_h(W_n(R))$ the truncated display $(W_n(R)^{h-d},W_n(R)^d,\Psi)$, where we view $\Psi$ as a $\sigma$-linear map $W_n(R)^h \to W_n(R)^h$ 
via $x \mapsto \Psi \cdot \sigma x$.
Now if we let $G_n^{h,d}$ act on $X_n^{h,d}$
via $$G \cdot \Psi=G \Psi \sigma_1(G)^{-1}$$ where $\sigma_1(G)=\left(
  \begin{array}{ c c }
     \sigma(A) & p\sigma(B) \\
     \sigma_1(C) & \sigma(D)
  \end{array} \right),$
then every $G \in G_n^{h,d}$ defines an isomorphism $\pi_n^{h,d}(\Psi) \to \pi_n^{h,d}(G \cdot \Psi)$ of truncated displays.
On the contrary if $G$ defines an isomorphism $\pi_n^{h,d}(\Psi) \to \pi_n^{h,d}(\Psi')$ then necessarily $\Psi'=G \Psi \sigma_1(G)^{-1}$. We thus obtain

\begin{Theorem}
\label{ThRepDisp}
The functor $\pi_n^{h,d}$ induces an isomorphism of stacks $$ [X_n^{h,d}/G_n^{h,d}] \cong \disp^{h,d}_n.$$ 
\end{Theorem}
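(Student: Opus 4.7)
The plan is to verify that the natural functor $\bar\pi_n^{h,d}\colon [X_n^{h,d}/G_n^{h,d}] \to \disp^{h,d}_n$ induced by $\pi_n^{h,d}$ is an equivalence of stacks by checking that it is fully faithful on isomorphism sheaves and essentially surjective fppf-locally. Most of the verification has in fact been laid out in the paragraphs preceding the statement; the theorem is essentially a repackaging.

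First I would construct $\bar\pi_n^{h,d}$ by exhibiting $\pi_n^{h,d}$ as $G_n^{h,d}$-equivariant with trivial action on the target (up to canonical $2$-isomorphism). Given $G \in G_n^{h,d}(R)$ and $\Psi \in X_n^{h,d}(R)$, the matrix $G$ is, by construction, a morphism of truncated displays from $\pi_n^{h,d}(\Psi)$ to $\pi_n^{h,d}(G \cdot \Psi)$ where $G \cdot \Psi = G\Psi\sigma_1(G)^{-1}$; the associativity of matrix multiplication together with the definition of $\sigma_1$ on $G_n^{h,d}$ furnishes the required cocycle.

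Next, for fully faithfulness on $2$-morphisms, I fix $\Psi, \Psi' \in X_n^{h,d}(R)$. On the quotient-stack side, isomorphisms between their images are by definition the set $\{G \in G_n^{h,d}(R) \mid G \Psi \sigma_1(G)^{-1} = \Psi'\}$. On the display side, an isomorphism $\pi_n^{h,d}(\Psi) \to \pi_n^{h,d}(\Psi')$ is precisely a matrix $\left(\begin{smallmatrix} A & B \\ C & D \end{smallmatrix}\right)$ with $A \in \GL_{h-d}(W_n(R))$, $B \in \Hom(W_n(R)^d, W_n(R)^{h-d})$, $C \in \Hom(W_n(R)^{h-d}, I_{n+1,R}^d)$, $D \in \GL_d(W_n(R))$ making the two squares for $F$ and $F_1$ commute. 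Unwinding the $F$- and $F_1$-squares, using $\sigma_1(p\omega) = \sigma(\omega)$ and the action of $\sigma_1$ on $I_{n+1, R}$, gives exactly the equation $\Psi' = G \Psi \sigma_1(G)^{-1}$. Hence the two sets coincide naturally in $R$, giving the claimed isomorphism of Isom-sheaves.

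Finally, for essential surjectivity, take a truncated display $(L, T, \Psi)$ of height $h$ and dimension $d$ over $R$. Since $L$ and $T$ are finitely generated projective $W_n(R)$-modules of constant ranks $h-d$ and $d$, they are Zariski-locally free over $W_n(R)$. The kernel of $W_n(R) \twoheadrightarrow R$ being nilpotent, $\Spec W_n(R)$ and $\Spec R$ share the same underlying topological space, so there is a Zariski cover $\{\Spec R_i \to \Spec R\}$ over which $L$ and $T$ become free of the expected ranks. Choosing such bases, $\Psi$ becomes an element $\Psi_i \in \GL_h(W_n(R_i)) = X_n^{h,d}(R_i)$ and the truncated display $(L, T, \Psi)|_{R_i}$ is identified with $\pi_n^{h,d}(\Psi_i)$. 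Combined with the previous step this shows $\bar\pi_n^{h,d}$ is an equivalence.

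The main obstacle is the second step: one must carefully translate the commutative $F$- and $F_1$-diagrams defining an isomorphism of truncated displays into the matrix identity $\Psi' = G\Psi\sigma_1(G)^{-1}$, checking in particular that the twisted block multiplication encoded by $\sigma_1(G)$ (with the $p$ in front of $\sigma(B)$ and with $\sigma_1$ applied to the $C$ block) exactly matches what the two commuting squares force. Once that translation is in hand, the rest is formal.
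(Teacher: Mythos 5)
Your proposal is correct and follows essentially the same route as the paper, which (quoting the proof of Proposition 3.15 in \cite{La}) simply records that elements of $G_n^{h,d}(R)$ correspond exactly to isomorphisms of the associated truncated displays and that any truncated display is Zariski-locally of the form $\pi_n^{h,d}(\Psi)$ since $L$ and $T$ are projective over $W_n(R)$ and the kernel of $W_n(R)\to R$ is nilpotent. Your packaging into fully faithfulness on Isom-sheaves plus local essential surjectivity is just a more explicit write-up of that argument, with the deferred matrix computation being exactly the verification the paper also leaves implicit.
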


There are the following two obvious vector bundles on $\disp_n^{h,d}$.
\begin{Definition}
\label{DefVB}
Let $\Spec R \to \disp_n^{h,d}$ be a map corresponding to a 
truncated display $\mathcal{P}=(L,T,\Psi)$. 
\begin{equivlist}
 \item We denote by $\mathcal{L}ie$ the vector bundle of rank $d$ over $\disp_n^{h,d}$ that assigns to $\Spec R \to \disp_n^{h,d}$ the vector bundle 
$\Lie(\mathcal{P})=T/I_{n,R}T$ of rank $d$ over $R$. 
 \item By $^t \mathcal{L}ie^{\vee}$ we denote the vector bundle of rank $h-d$ that assigns to $\Spec R \to \disp_n^{h,d}$ the vector bundle $L/I_{n,R}L$
of rank $h-d$ over $R$.
 \end{equivlist}
\end{Definition}

\begin{Remark}
\label{RemVB}
The notation $^t \mathcal{L}ie^{\vee}$ in the above definition stems from the fact that the dual of $L/I_{n,R}L$ gives the Lie algebra of 
the dual display $\mathcal{P}^t$. For the definition of the dual display see \cite[Definition 19]{Zi}.
\end{Remark}

\textit{The Truncated Display Functor.} As already mentioned in the introduction the strategy for computing the Chow ring of the stack of truncated Barsotti-Tate groups 
is to relate it to the stack of truncated displays. This happens via the truncated display functor 
$$\phi_n \colon BT_n \to \disp_n$$
constructed in \cite{La}. Let us briefly sketch the construction. 

Let $G$ be a $p$-divisible group over an $\mathbb{F}_p$-algebra $R$. The ring of Witt vectors $W(R)$ is $p$-adically complete and the ideal $I_R$ in $W(R)$ carries natural divided powers compatible with the canonical divided powers of $p$. Let $\mathbb{D}(G)$ denote the covariant Dieudonne crystal of $G$.  We can
evaluate $\mathbb{D}(G)$ at $W(R) \to R$ and set $P=\mathbb{D}(G)_{W(R) \to R}$ and $Q=\Ker(P \to \Lie(G))$. Furthermore,
let $F^{\sharp} \colon P^{\sigma} \to P$ and $V^{\sharp} \colon P \to P^{\sigma}$ be the maps induced by Frobenius
and Verschiebung of $G$. One can show that there are $\sigma$-linear maps $F \colon P \to P$ resp.\ $\dot{F} \colon Q \to P$ compatible with base change in $R$
such that $(P,Q,F,\dot{F})$ is a display which induces the maps $F^{\sharp}$ and $V^{\sharp}$. See \cite[Proposition 2.4]{La} for the precise statement.
This construction yields a 1-morphism
$$
\phi \colon BT \to \disp
$$
from the stack of Barsotti-Tate groups to the stack of displays.
It is clear from the construction that the Lie algebra of $G$ is equal to the Lie algebra of $\phi(G)$ defined by $P/Q$.  

Moreover, one can prove that for all $n$ there are maps $\phi_n \colon BT_n \to \disp_n$ compatible with the truncation
maps on both sides such that $\phi$ is the projective limit of the system $(\phi_n)_{n \geq 1}$. The central result in \cite{La} is that
$\phi_n$ is a smooth morphisms of smooth algebraic stacks over $\mathbb{F}_p$ which is an equivalence on
geometric points. 

\subsection{Group Theoretic Properties of $G_n^{h,d}$} 
We denote by $K_{(n,m)}^{h,d}$ the kernel of the projection $G_n^{h,d} \to G_m^{h,d}$ for $m<n$ and by 
$\tilde{K}^{h,d}_n$ the kernel of the projection $G_n^{h,d} \to \GL_{h-d} \times \GL_d$. Note that $G_n^{h,0}=\GL_{h}(W_n(\cdot))$.
We recall the following well known facts about the Witt ring.
For an $\F_p$-algebra $R$ we denote by $[\cdot] \colon R \to W_n(R)$ the map $r \mapsto (r,0,\ldots,0)$ and ${}^V(\cdot) \colon W(R) \to W(R)$ is the Verschiebung.
\begin{Lemma}
 Let $R$ be an $\mathbb{F}_p$-algebra and $x,y \in R$. Then $[x+y]-[x]-[y]$ lies in $^VW(R)$. 
Furthermore, ${}^{V^r}W(R) \cdot {}^{V^s}W(R) \subset {}^{V^{r+s}}W(R)$.
\end{Lemma}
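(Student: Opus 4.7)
The plan is to handle the two assertions separately, using only the structural identities of the Witt ring.

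For the first assertion, I would recall that $[x] = (x,0,0,\ldots)$ in Witt coordinates and that ${}^V W(R) \subseteq W(R)$ is exactly the kernel of the first-coordinate projection $\pi \colon W(R) \to R$, $(a_0, a_1, \ldots) \mapsto a_0$. Since the universal Witt polynomial for the first coordinate of a sum is just $X_0 + Y_0$ (and similarly for subtraction), $\pi$ is a ring homomorphism. Applying $\pi$ to $[x+y] - [x] - [y]$ gives $(x+y) - x - y = 0$ in $R$, so this element lies in ${}^V W(R)$. This is the entire argument for (1).

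For the second assertion, the key tool is the projection formula $x \cdot {}^V y = {}^V(F(x)\, y)$ together with $FV = p$. I would first prove by induction on $s \geq 1$ the identity $p \cdot {}^{V^s} y = {}^{V^s}(py)$ using $p = FV$. Then, working inductively to pull the $V$'s out one at a time, I would establish the closed formula
\[
  {}^{V^r}\! a \;\cdot\; {}^{V^s}\! c \;=\; {}^{V^{r+s}}\!\bigl(F^s(a)\, F^r(c)\bigr),
\]
first in the case $r \leq s$ by writing ${}^{V^s} c = {}^V({}^{V^{s-1}} c)$, applying the projection formula to pull $F$ across, and using $F \circ {}^{V^r} = p \cdot {}^{V^{r-1}}$; the symmetric case $r > s$ follows from commutativity of multiplication. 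In particular the product lies in ${}^{V^{r+s}} W(R)$, which is exactly the containment claimed.

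The only mildly delicate point is bookkeeping the interactions of $F$ and $V$ (namely $FV = p$, the $F$-linearity of $V$ via the projection formula, and $p \cdot {}^{V^s}(-) = {}^{V^s}(p\,-)$); once these are in place, both assertions are immediate. There is no genuine obstacle, since the paper presents these as well-known facts about $W(R)$ for an $\mathbb{F}_p$-algebra $R$.
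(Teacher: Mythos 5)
Your treatment of the first assertion is exactly the paper's argument: ${}^VW(R)$ is the kernel of the ring homomorphism $\mathbb{W}_0\colon W(R)\to R$ given by the first component, and $\mathbb{W}_0([x])=x$, so $[x+y]-[x]-[y]$ maps to zero. No objection there.

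For the second assertion there is a genuine gap: nowhere in your argument does the hypothesis $pR=0$ enter, and the statement is false without it. All the identities you propose to use --- the projection formula $x\cdot{}^Vy={}^V(F(x)\,y)$, the relation $F\circ V=p$ (and hence $F\circ{}^{V^r}=p\cdot{}^{V^{r-1}}$), and $p\cdot{}^{V^s}y={}^{V^s}(py)$, which is nothing but additivity of $V$ --- hold for Witt vectors over an arbitrary base ring; yet over $\mathbb{Z}$ one has ${}^V(1)\cdot{}^V(1)=p\,{}^V(1)\notin{}^{V^2}W(\mathbb{Z})$, since the ghost component $\mathbb{W}_1=X_0^p+pX_1$ is a ring homomorphism vanishing on ${}^{V^2}W(\mathbb{Z})$ but taking the value $p^2\neq 0$ on $p\,{}^V(1)$. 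So no induction built only from those identities can prove the containment. Concretely, your inductive step fails at its last move: pulling one $V$ off ${}^{V^s}c$, applying the projection formula and $F\circ{}^{V^r}=p\cdot{}^{V^{r-1}}$, and invoking the inductive hypothesis leaves you with ${}^{V^{r+s-1}}\bigl(p\,F^{s-1}(a)F^{r-1}(c)\bigr)$, and to promote this to an element of ${}^{V^{r+s}}W(R)$ you must know that multiplication by $p$ lands in ${}^VW(R)$, i.e.\ that $p=V\circ F$ on $W(R)$, equivalently $p\cdot(x_0,x_1,\ldots)=(0,x_0^p,x_1^p,\ldots)$. This is precisely where $pR=0$ is used, and it is the fact the paper invokes after the same characteristic-free reduction ${}^{V^r}x\cdot{}^{V^s}y=p^s\,{}^{V^r}\bigl(x\,{}^{F^{r-s}}y\bigr)$ (for $r\geq s$). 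Your closed formula ${}^{V^r}a\cdot{}^{V^s}c={}^{V^{r+s}}\bigl(F^s(a)F^r(c)\bigr)$ is in fact correct over an $\mathbb{F}_p$-algebra, but only once this characteristic-$p$ description of multiplication by $p$ is added (alternatively: deduce from part one that $p\cdot 1\in{}^VW(R)$, note ${}^VW(R)$ is an ideal by the projection formula, and induct on the number of factors of $p$); as written, the proposal omits the one ingredient that makes the lemma true.
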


\begin{proof}
The first part follows immediately from the fact that ${}^VW(R)$ is the kernel of the ring homomorphism 
$\mathbb{W}_0 \colon W(R) \to R$ and the fact $\mathbb{W}_0([x])=x$ for all $x \in R$. 

For the second part we may assume $r \geq s$. We then write ${}^{V^r}x {}^{V^s}y={}^{V^r}(x{}^{F^rV^s}y)=p^s \cdot {}^{V^r}(x{}^{F^{r-s}}y)$. Since $pR=0$ we have
$p(x_0,x_1,\ldots)=(0,x_0^p,x_1^p,\ldots)$ in $W(R)$ and the lemma follows.
\end{proof}

\begin{Lemma}
\label{LeG_n^{h,d}}
\begin{equivlist}
 \item $K_{(n,m)}^{h,d}$ is unipotent. 
 \item $\tilde{K}^{h,d}_n$ is unipotent.
\end{equivlist}
\end{Lemma}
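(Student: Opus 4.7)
The plan for both parts is to exhibit descending normal filtrations with vector-group quotients, which can then be refined to filtrations with $\G_a$-quotients.

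For (i), I consider the chain $K_{(n,m)}^{h,d}=H_m\supseteq H_{m+1}\supseteq\ldots\supseteq H_n=\{1\}$, where $H_j:=K_{(n,j)}^{h,d}=\ker(G_n^{h,d}\to G_j^{h,d})$. Each $H_j$ is normal in $G_n^{h,d}$, hence in $H_m$, so it suffices to show that every quotient $H_j/H_{j+1}$ is a vector group. An element of $H_j$ is a block matrix $\begin{pmatrix}A & B \\ C & D\end{pmatrix}$ whose entries differ from the identity block by quantities in ${}^{V^j}W(R)/{}^{V^n}W(R)$ (and, in the $C$-block, in the corresponding filtration piece of $I_{n+1,R}$). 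Multiplying two such elements in $G_n^{h,d}$ via the twisted block law (in the style of Remark \ref{RemG_1^{h,d}}), the preceding lemma ${}^{V^r}W(R)\cdot {}^{V^s}W(R)\subset {}^{V^{r+s}}W(R)$ shows that every bilinear cross term involving two $V^j$-valued factors lies in ${}^{V^{2j}}W(R)$, hence in $H_{2j}\subseteq H_{j+1}$ (as $j\geq m\geq 1$). Therefore the group law on $H_j/H_{j+1}$ collapses to componentwise addition of the $V^j$-components, identifying $H_j/H_{j+1}$ with a power of $\G_a$.

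For (ii), the projection $G_n^{h,d}\to\GL_{h-d}\times\GL_d$ factors through $G_1^{h,d}$, so restricting to kernels yields a short exact sequence
$$1\longrightarrow K_{(n,1)}^{h,d}\longrightarrow \tilde{K}_n^{h,d}\longrightarrow K_1\longrightarrow 1,$$
where $K_1:=\ker(G_1^{h,d}\to\GL_{h-d}\times\GL_d)$. The left term is unipotent by (i). An element of $K_1$ has the form $\begin{pmatrix}I & B \\ C & I\end{pmatrix}$, and by Remark \ref{RemG_1^{h,d}} its multiplication collapses to
$$\begin{pmatrix}I & B \\ C & I\end{pmatrix}\begin{pmatrix}I & B' \\ C' & I\end{pmatrix}=\begin{pmatrix}I & B+B' \\ C+C' & I\end{pmatrix},$$
so $K_1$ is abelian and isomorphic to the vector group $\G_a^{2d(h-d)}$. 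Since an extension of unipotent groups is again unipotent (concatenate the two filtrations, pulling back the one on the quotient), $\tilde{K}_n^{h,d}$ is unipotent.

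The main obstacle is the bookkeeping in (i): the group law on $G_n^{h,d}$ is not ordinary matrix multiplication (the $\sigma_1$-twist reshuffles the $C$-block, and $C$-entries live in $I_{n+1,R}$ rather than in $W_n(R)$), so one must track block by block how each product of two $V^j$-factors sits in the Verschiebung filtration. Granted the preceding lemma, however, this is a routine computation, and the filtration argument then goes through uniformly in both statements.
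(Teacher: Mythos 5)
Your proof is correct and follows essentially the same route as the paper: both rest on the preceding Witt-vector lemma to produce filtrations of the truncation kernels by normal subgroups with vector-group (powers of $\G_a$) quotients, and then conclude by dévissage along exact sequences. The only differences are organizational — you filter $K^{h,d}_{(n,m)}$ directly by the $K^{h,d}_{(n,j)}$ and for (ii) use the single extension of $\tilde{K}^{h,d}_1$ by $K^{h,d}_{(n,1)}$, whereas the paper inducts on $n$ through the one-step kernels $K^{h,d}_{(n,n-1)}$ — and these amount to the same argument.
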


\begin{proof}
(i) First note that $K_{(n,n-1)}^{h,0}=ker(\GL_h(W_n(\cdot)) \to \GL_h(W_{n-1}(\cdot)))$ is unipotent. To see this we consider the Verschiebung
${}^V(\cdot)$ as a map $W_n(R) \to W_n(R)$. Then by the above lemma the map
$$
\mathbb{G}_a^{h^2} \to K_{(n,n-1)}^{h,0}, \quad A \mapsto I_h + {}^{V^{n-1}}[A]
$$
is an isomorphism of algebraic groups. 

Next we show that $K_{h,d}^{(n,n-1)}$ is unipotent. This is the group of matrices 
$
\left(
  \begin{array}{ c c }
     A & B \\
     C & D
  \end{array} \right)
$
with $A \in K^{h-d,0}_{(n,n-1)}$, $B \in J_n^{(h-d) \times d}$, $C \in J_{n+1}^{d \times (h-d)}$ and $D \in K^{d,0}_{(n,n-1)}$.
The multiplication in this group is given by
$$
\left(
  \begin{array}{ c c }
     A & B \\
     C & D
  \end{array} \right)
\left(
  \begin{array}{ c c }
     A' & B' \\
     C' & D'
  \end{array} \right)
=
\left(
  \begin{array}{ c c }
     AA' & AB'+BD' \\
     CA'+DC' & DD'
  \end{array} \right)
$$
Starting with the normal subgroup 
$
\left(
  \begin{array}{ c c }
     I_{h-d} & J_n^{(h-d) \times d} \\
     J_{n+1}^{d \times (h-d)} & I_d
  \end{array} \right),
$
which is isomorphic to $\mathbb{G}_a^{2d(h-d)}$, and then using the fact that $K_{(n,n-1)}^{h-d,0}$ resp.\ $K_{(n,n-1)}^{d,0}$ are isomorphic to $\mathbb{G}_a^{(h-d)^2}$ resp.
$\mathbb{G}_a^{d^2}$ one obtains a filtration of $K^{h,d}_{(n,n-1)}$ by normal subgroups, whose successive quotients are isomorphic to a product of copies of $\mathbb{G}_a$.
Now we have an exact sequence 
$$
\xymatrix{
0 \ar[r] & K^{h,d}_{(n,n-1)} \ar[r] & K_{(n,m)}^{h,d} \ar[r] & K^{h,d}_{(n-1,m)} \ar[r] & 0
}
$$
and by induction we may assume that $K^{h,d}_{(n-1,m)}$ is unipotent. It follows that $K_{(n,m)}^{h,d}$ is unipotent. \\
(ii) For $n=1$ the assertion is obvious in view of Remark \ref{RemG_1^{h,d}}. For $n>1$ we use the exact sequence 
$$
\xymatrix{
0 \ar[r] & K^{h,d}_{(n,n-1)} \ar[r] & \tilde{K}^{h,d}_n \ar[r] & \tilde{K}^{h,d}_{n-1} \ar[r] & 0.
}
$$
By induction and part (i) it follows that $\tilde{K}^{h,d}_n$ is unipotent. 
\end{proof}

\begin{Corollary}
\label{CorG_n^{h,d}}
 \begin{equivlist}
  \item $G_n^{h,d}$ is special.
  \item $\tilde{K}^{h,d}_n$ is the unipotent radical of $G_n^{h,d}$.
  \item The projection $X_n^{h,d} \to X_1^{h,d}$ is a trivial $K^{h,0}_{(n,1)}$-torsor.
 \end{equivlist}
\end{Corollary}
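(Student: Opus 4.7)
The plan is to handle the three parts in sequence, leveraging the structure of the exact sequence
\[
1 \to \tilde{K}^{h,d}_n \to G_n^{h,d} \to \GL_{h-d} \times \GL_d \to 1
\]
built into the definition of $\tilde{K}^{h,d}_n$. For (i), by Lemma \ref{LeG_n^{h,d}}(ii) the kernel is unipotent, hence special (unipotent groups are iteratively built from $\mathbb{G}_a$'s, which are special), and $\GL_{h-d} \times \GL_d$ is visibly special. Since an extension of special groups by special groups is again special, $G_n^{h,d}$ is special. For (ii), note that $\tilde{K}^{h,d}_n$ is connected (by the $\mathbb{G}_a$-filtration built into the paper's definition of unipotent), normal (as a kernel), and unipotent, so it sits inside the unipotent radical $R_u(G_n^{h,d})$. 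Conversely, since $\GL_{h-d} \times \GL_d$ is reductive, the image of $R_u(G_n^{h,d})$ in the quotient is a normal connected unipotent subgroup of a reductive group, hence trivial; this yields the reverse inclusion.

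For (iii), the plan is to construct an explicit scheme-theoretic section of $X_n^{h,d} \to X_1^{h,d}$ and use it to trivialize the torsor. The key ingredient is that the truncation $W_n \to W_1 = \mathbb{G}_a$ of the Witt ring scheme admits a section $r \mapsto (r,0,\ldots,0)$ as a morphism of schemes --- indeed, under the identification $W_n \cong \mathbb{A}^n$ (as a scheme, not a ring scheme), the truncation is just projection to the first coordinate, and the section is the corresponding coordinate inclusion. Applied entry-wise to $h \times h$ matrices one obtains a morphism $s \colon X_1^{h,d} \to X_n^{h,d}$, which lands in $\GL_h(W_n(\cdot))$ because an element of $W_n(R)$ is a unit precisely when its first Witt coordinate is. The assignment $(g, x) \mapsto g \cdot s(x)$ then provides an isomorphism $K^{h,0}_{(n,1)} \times X_1^{h,d} \iso X_n^{h,d}$ of $K^{h,0}_{(n,1)}$-spaces over $X_1^{h,d}$, exhibiting the torsor as trivial.

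The only genuine subtlety lies in (iii): since the Teichmüller-style lift is multiplicative but not additive, $s$ is not a group homomorphism, and indeed no section of $\GL_h(W_n(\cdot)) \to \GL_h$ exists in the category of group schemes. What makes the argument go through is the observation that triviality of a torsor requires only a section of schemes, which the coordinate decomposition of the Witt scheme provides for free.
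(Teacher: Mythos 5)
Your proposal is correct, and on part (iii) it takes a genuinely different route from the paper. For (i) you argue exactly as the paper does: $\tilde{K}^{h,d}_n$ is unipotent hence special, $\GL_{h-d}\times\GL_d$ is special, and an extension of special by special is special. For (ii) the paper in fact offers no argument at all, so your standard one (a connected normal unipotent subgroup lies in $R_u(G_n^{h,d})$, while the image of $R_u(G_n^{h,d})$ in the reductive quotient $\GL_{h-d}\times\GL_d$ must be trivial) fills a gap the paper leaves implicit. For (iii) the paper notes that the projection is a $K^{h,0}_{(n,1)}$-torsor by definition and then invokes the general vanishing statement: torsors under a unipotent group (in the paper's sense, filtered by $\mathbb{G}_a$'s) over an affine base such as $X_1^{h,d}=\GL_h$ are trivial, since $H^1(X,\mathbb{G}_a)=H^1(X,\Ocal_X)=0$ for affine $X$. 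You instead build an explicit trivialization from the scheme-theoretic section $r\mapsto(r,0,\ldots,0)$ of $W_n\to W_1$ applied entrywise, checking that the lift of an invertible matrix is invertible because the kernel of $\Mat_h(W_n(R))\to\Mat_h(R)$ is nilpotent, and then using $(g,x)\mapsto g\cdot s(x)$. This is more elementary and hands you the product decomposition $X_n^{h,d}\cong K^{h,0}_{(n,1)}\times X_1^{h,d}$ directly, with an evident inverse $\Psi\mapsto(\Psi\, s(\bar\Psi)^{-1},\bar\Psi)$; the paper's argument is shorter and applies to any unipotent torsor over an affine base without needing a section in coordinates. One small quibble: your parenthetical assertion that no group-scheme section of $\GL_h(W_n(\cdot))\to\GL_h$ exists is unproven and in fact fails for $h=1$, where the Teichm\"uller map is multiplicative and does give such a section; but this remark is not load-bearing, since, as you say, triviality of the torsor only requires a section of schemes.
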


\begin{proof}
We have the exact sequence
$$
\xymatrix{
0 \ar[r] & \tilde{K}^{h,d}_n \ar[r] & G_n^{h,d} \ar[r] & \GL_{h-d} \times \GL_d \ar[r] & 0.
}
$$
Now $\tilde{K}^{h,d}_n$ is unipotent, thus special. Since $\GL_{h-d} \times \GL_{d}$ is also special part (i) follows. 

Clearly the projection $X_n^{h,d} \to X_1^{h,d}$ is a $K^{h,0}_{(n,1)}$-torsor by definition of $K^{h,0}_{(n,1)}$.
It is trivial since $K^{h,0}_{(n,1)}$ is unipotent and $X_1^{h,d}$ is affine.
\end{proof}

\subsection{The Chow Ring of $\disp_n$}
We start with the following result which reduces the calculation of $A^*(\disp_n)$ to the case $n=1$.
\begin{Theorem}
\label{Thhigherdisp}
 The pull-back 
$$
\tau_n^* \colon A^*(\disp^{h,d}_1) \to A^*(\disp_n^{h,d})
$$
of the truncation $\tau_n \colon \disp_n^{h,d} \to \disp^{h,d}_1$ is an isomorphism.
\end{Theorem}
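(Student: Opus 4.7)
The plan, as sketched in the introduction, is to factor the truncation morphism as
$$\disp_n^{h,d} = [X_n^{h,d}/G_n^{h,d}] \xrightarrow{\alpha} [X_1^{h,d}/G_n^{h,d}] \xrightarrow{\beta} [X_1^{h,d}/G_1^{h,d}] = \disp_1^{h,d},$$
where $\alpha$ is induced by the $G_n^{h,d}$-equivariant truncation $X_n^{h,d} \to X_1^{h,d}$ and $\beta$ comes from the surjection $G_n^{h,d} \twoheadrightarrow G_1^{h,d}$ (whose kernel $K_{(n,1)}^{h,d}$ acts trivially on $X_1^{h,d}$). Since $\disp_n^{h,d}$ and $\disp_1^{h,d}$ are both smooth, the operational Chow ring agrees with the Chow group graded by dimension, and it suffices to show that $\alpha^*$ and $\beta^*$ are each isomorphisms.

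For $\alpha$, Corollary \ref{CorG_n^{h,d}}(iii) says that $X_n^{h,d} \to X_1^{h,d}$ is a trivial $K_{(n,1)}^{h,0}$-torsor, and $K_{(n,1)}^{h,0}$ is unipotent by Lemma \ref{LeG_n^{h,d}}(i), hence isomorphic as a scheme to $\mathbb{A}^N$ for some $N$. Choosing a good pair $(V,U)$ for $G_n^{h,d}$, the induced map of approximations $(X_n^{h,d} \times U)/G_n^{h,d} \to (X_1^{h,d} \times U)/G_n^{h,d}$ is a Zariski-locally trivial affine bundle with fiber $\mathbb{A}^N$, so homotopy invariance (Corollary \ref{CorAffineFibers}) yields $\alpha^* \colon A^*([X_1^{h,d}/G_n^{h,d}]) \iso A^*([X_n^{h,d}/G_n^{h,d}])$.

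For $\beta$, apply the pull-back formalism of Section 1.4 to the exact sequence
$$0 \to K_{(n,1)}^{h,d} \to G_n^{h,d} \to G_1^{h,d} \to 0$$
acting on $X_1^{h,d}$. The kernel $K_{(n,1)}^{h,d}$ is unipotent by Lemma \ref{LeG_n^{h,d}}(i), and the quotient $G_1^{h,d}$ is special by Corollary \ref{CorG_n^{h,d}}(i). Since $X_1^{h,d}$ is open in $\mathbb{A}^{h^2}$, it is quasi-projective and normal, so by Lemma \ref{LeAdm} it is admissible for both $G_1^{h,d}$ and $G_n^{h,d}$. The hypotheses of Corollary \ref{CorExt1}(i) are thus satisfied, and $\beta^*$ is an isomorphism.

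The main substance of the argument is already encoded in the group-theoretic statements of Lemma \ref{LeG_n^{h,d}} and Corollary \ref{CorG_n^{h,d}} together with the pull-back machinery of Section 1.4; beyond invoking those, no genuine obstacle remains. One small check worth emphasizing is that the truncation $X_n^{h,d} \to X_1^{h,d}$ is $G_n^{h,d}$-equivariant (the action on $X_1^{h,d}$ factors through $G_1^{h,d}$), which is clear from the explicit formula $G \cdot \Psi = G\Psi \sigma_1(G)^{-1}$ and its compatibility with truncation of Witt vectors. Composing the two isomorphisms $\alpha^*$ and $\beta^*$ gives the asserted isomorphism $\tau_n^*$.
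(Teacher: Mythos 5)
Your factorization of $\tau_n$ and your treatment of $\beta$ coincide with the paper's proof: unipotence of $K^{h,d}_{(n,1)}$ (Lemma \ref{LeG_n^{h,d}}) together with specialness of $G_1^{h,d}$ feeds into Corollary \ref{CorExt1}(i), exactly as in the paper. The one place where your write-up has a genuine gap is the step for $\alpha$: you assert that the induced map of approximations $(X_n^{h,d}\times U)/G_n^{h,d}\to (X_1^{h,d}\times U)/G_n^{h,d}$ is a Zariski-locally trivial affine bundle, but this does not follow from Corollary \ref{CorG_n^{h,d}}(iii) alone. Triviality of the $K^{h,0}_{(n,1)}$-torsor $X_n^{h,d}\to X_1^{h,d}$ is a statement upstairs; after dividing by $G_n^{h,d}$ the map is, a priori, only a fibration whose fiber over a point $y$ of the base has the form $\bigl(F_y\times K^{h,0}_{(n,1)}\bigr)/G_n^{h,d}$ with $F_y$ a $G_n^{h,d}$-torsor over $k(y)$; for a non-special structure group such fibers could be twisted forms rather than affine spaces, so neither Zariski-local triviality nor even the hypothesis of Corollary \ref{CorAffineFibers} (fibers are affine spaces) is automatic, and this is precisely the substantive content of the paper's argument for $\alpha$.

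The paper closes this gap using Corollary \ref{CorG_n^{h,d}}(i): since $G_n^{h,d}$ is special, the principal bundle $(X_1^{h,d}\times U)\to (X_1^{h,d}\times U)/G_n^{h,d}$ is Zariski-locally trivial, and over a trivializing open $W$ one has the explicit identification $\bigl(G_n^{h,d}\times W\times K\bigr)/G_n^{h,d}\cong W\times K$ given by $(g,x,k)\mapsto (x,k')$ where $g^{-1}\cdot(g,x,k)=(1,x,k')$; this is what makes the quotient map a Zariski-locally trivial bundle with fiber the affine space $K=K^{h,0}_{(n,1)}$. You should also take $(V,U)$ to be an admissible pair in the sense of Lemma \ref{LeAdm} (not merely a good pair), so that the mixed spaces are quasi-projective and Corollary \ref{CorAffineFibers}, which is stated for quasi-projective schemes, applies. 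With these two points inserted, your argument becomes the paper's proof.
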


\begin{proof}
 Under the presentation $\disp_n^{h,d}=[X_n^{h,d}/G_n^{h,d}]$ the truncation $\tau_n$ is induced by the natural projections
$X_n^{h,d} \to X_1^{h,d}$ and $G_n^{h,d} \to G_1^{h,d}$. Thus $\tau_n$ factors as
$$[X_n^{h,d}/G_n^{h,d}] \to [X_1^{h,d}/G_n^{h,d}] \to [X_1^{h,d}/G_1^{h,d}].$$
The pull-back of the second map is an isomorphism by Lemma \ref{LeG_n^{h,d}} and
Corollary \ref{CorExt1}. 
To show that the pull-back of the first map is also an isomorphism let us abbreviate $X=X_1^{h,d}$ and $G=G_n^{h,d}$. By part (iii) of Corollary \ref{CorG_n^{h,d}} we know that $X_n^{h,d}=X \times K$ with $K=K^{h,0}_{(n,1)}$, and the projection $X \times K \to X$ is $G$-equivariant.
Moreover, $K$ is an affine space by Lemma \ref{LeG_n^{h,d}}. After replacing $[X/G]$ by an appropiate mixed space (cf. Convention \ref{ConMixedSpace}), i.e.
replacing $X$ by $X \times U$ where $(V,U)$ is an admissible pair with high codimension,
we may assume that $[X/G]$ is a quasi-projective scheme.
We claim that $(X \times K)/G \to X/G$ is a Zariksi locally-trivial affine bundle. Since $G$ is special by part (i) of Corollary \ref{CorG_n^{h,d}} the 
principal $G$-bundle $X \to X/G$ is locally trivial for the Zariski topology and after replacing $X/G$ by an appropiate open subset we may assume
$X=G \times (X/G)$. We then have an isomorphism $(G \times (X/G) \times K)/G \cong (X/G) \times K$ given by the assignment $(g,x,k) \mapsto (x,k')$,
where $k'$ is defined by $g^{-1}(g,x,k)=(1,x,k')$. This proves the claim and hence the pull-back of the first map is also an 
isomorphism by Corollary \ref{CorAffineFibers}.
\end{proof}

The main ingredient of the computation of $A^*\disp_1^{h,d}$ is the following Proposition

\begin{Proposition}
\label{PropChowConj}
Let $G$ be a connected split reductive group over a field $k$ with split maximal torus $T$. 
Consider an isogeny $\varphi \colon L \to M$, where $L$ and $M$ are Levi components of parabolic subgroups $P$ and $Q$ of $G$.
Assume $T \subset L$ and let $g_0 \in G(k)$ such that $\varphi(T)={}^{g_0}T$.
Let $\tilde{\varphi} \colon T \to T$ denote the isogeny $\varphi$ followd by conjugation with $g_0^{-1}$.
We write $S=\Sym(\hat{T})=A^*_T$ and $S_+=A^{\geq 1}_T$. We have a natural action of $\tilde{\varphi}$ on $S$, 
that we will also denote by $\tilde{\varphi}$. 

Consider the action of $L$ on $G$ by $\varphi$-conjugation. 
If $W_G=W(G,T)$ and $W_L=W(L,T)$ denote the respective Weyl groups we have
$$
A^*_L(G)_{\Q}=S^{W_L}_{\Q}/(f-\tilde{\varphi} f \mid f \in S_+^{W_G})_{\Q}.
$$
If $G$ is special we have
$$
 A^*_L(G)=S^{W_L}/(f-\tilde{\varphi} f \mid f \in S_+^{W_G}).
$$
(Note that the action of $\tilde{\varphi}$ on $S^{W_G}$ is independent of the choice of $g_0$ since two choices differ by an element of $N_G(T)$.)
\end{Proposition}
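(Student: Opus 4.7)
\emph{Plan.} The approach is to reduce the computation to a $T$-equivariant one via the Weyl-invariant theorem, to realize $[G/T]_{\varphi\text{-conj}}$ as a fiber product of classifying stacks, and then to extract the Chow ring by a K\"unneth-type identification.

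First I invoke Theorem \ref{ThInv}(ii) to write $A^*_L(G)_{\mathbb{Q}} = A^*_T(G)^{W_L}_{\mathbb{Q}}$, where $T$ acts on $G$ by restriction of the $\varphi$-conjugation; when $G$ is special the analogous integral isomorphism holds. Setting $T' = \varphi(T) = {}^{g_0}T$ and $S' = \Sym(\hat{T}')$, the $\varphi$-conjugation of $T$ factors through $\iota \colon T \to T \times T'$, $t \mapsto (t, \varphi(t))$, where $T \times T'$ acts on $G$ by $(t_1, t_2) \cdot g = t_1 g t_2^{-1}$. This produces the cartesian square
$$
\xymatrix{
[G/T]_{\varphi\text{-conj}} \ar[r] \ar[d] & [G/(T \times T')] \ar[d] \\
BT \ar[r]^-{B\iota} & B(T \times T'),
}
$$
and $[G/(T \times T')]$ is naturally the $2$-fiber product $BT \times_{BG} BT'$ under the inclusions $T, T' \hookrightarrow G$.

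The crucial technical input is the K\"unneth identification $A^*(BT \times_{BG} BT') = S \otimes_{S^{W_G}} S'$. I would prove this by taking approximations: if $U/G$ approximates $BG$, then $(U/T) \times_{U/G} (U/T')$ approximates the fiber product, and the projection to $U/G$ is a fiber bundle with fiber $G/T \times G/T'$. Since $G/T$ and $G/T'$ admit Bruhat cell decompositions, Leray--Hirsch applies, and Chevalley's freeness of $S$ over $S^{W_G}$ converts the result into the displayed tensor product. Combining with the cartesian square---and noting that pullback along $B\iota$ sends $b \in S'$ to $\varphi^*(b) \in S$, while $\iota_T \circ \tilde{\varphi}$ and $\iota_{T'} \circ \varphi$ agree on $S^{W_G}$ since they differ by an inner automorphism of $G$---yields
$$
A^*_T(G) = S \otimes_{S \otimes S'} (S \otimes_{S^{W_G}} S') = S/(f - \tilde{\varphi}(f) \mid f \in S^{W_G}_+).
$$

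To conclude I take $W_L$-invariants. Since $\varphi \colon L \to M$ is an isogeny, it induces an isomorphism $W_L \cong W_M$ under which $\varphi^* \colon S' \to S$ is equivariant, and $\iota_{T'}(S^{W_G}) \subset (S')^{W_M}$, so $\tilde{\varphi}(S^{W_G}) \subset S^{W_L}$. Hence the defining ideal $I \subset S$ is generated by $W_L$-invariant elements, and the averaging operator $|W_L|^{-1}\sum_{w \in W_L} w$ shows that rationally $I^{W_L}_{\mathbb{Q}}$ coincides with the ideal of $S^{W_L}_{\mathbb{Q}}$ generated by $f - \tilde{\varphi}(f)$; exactness of rational $W_L$-invariants then gives $(S/I)^{W_L}_{\mathbb{Q}} = S^{W_L}_{\mathbb{Q}}/(f - \tilde{\varphi}(f))_{\mathbb{Q}}$. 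The integral statement for special $G$ follows by the same argument, using that $S$ is then a free $S^{W_L}$-module. I expect the main obstacle to be the K\"unneth identification for $BT \times_{BG} BT'$: Chow groups of $2$-fiber products of stacks do not satisfy K\"unneth in general, so one must exploit the Bruhat cellular decomposition of $G/T$ and $G/T'$ (or equivalently Edidin--Graham's computation of $A^*_T(G/B)$) to push the argument through.
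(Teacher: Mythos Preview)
Your overall architecture matches the paper's: reduce from $L$ to $T$ via Theorem~\ref{ThInv}, compute $A^*_T(G)$ for the $\varphi$-conjugation action, then pass to $W_L$-invariants using that the ideal is generated by $W_L$-invariant elements together with freeness/faithful flatness of $S$ over $S^{W_L}$. The paper simply cites \cite[Proposition~1.1]{Br} for the formula $A^*_T(G)=S/(f-\tilde\varphi f)$ and then carries out exactly your averaging/faithful-flatness argument for the invariants; your fiber-product realisation $[G/T]_{\varphi\text{-conj}}\cong BT\times_{B(T\times T')}[G/(T\times T')]$ together with the K\"unneth identification $A^*(BT\times_{BG}BT')=S\otimes_{S^{W_G}}S'$ is a perfectly good substitute for that citation.

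There is, however, a second step you pass over too quickly. You write $A^*_T(G)=S\otimes_{S\otimes S'}(S\otimes_{S^{W_G}}S')$ as if base change in Chow rings along a cartesian square of stacks were automatic; it is not. What saves you here is that the vertical map $[G/T]_{\varphi\text{-conj}}\to[G/(T\times T')]$ is a $(T\times T')/\iota(T)\cong T'$-torsor, so on approximations it is the complement of the zero sections in a sum of line bundles, and hence $A^*([G/T])$ is the quotient of $A^*([G/(T\times T')])$ by the first Chern classes of those line bundles. These Chern classes are precisely the images of the generators $1\otimes\chi-\varphi^*(\chi)\otimes 1$ of $\ker(S\otimes S'\to S)$, which is exactly what your tensor product computes. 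So the identity is correct, but the justification is the torus-bundle formula, not an abstract base change; you misidentify the ``main obstacle'' as the K\"unneth for $BT\times_{BG}BT'$ when in fact that step is routine Leray--Hirsch and it is this second passage that needs the extra sentence.
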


\begin{proof}
The case of special $G$ is proven in \cite[Proposition 1.1]{Br}.
It remains to show $A^*_L(G)_{\Q}=S^{W_L}_{\Q}/(f-\varphi f \mid f \in S_+^{W_G})_{\Q}$ in the non-special case. 
Using the same argumentation as in the special case we arrive at
$$
A^*_T(G)_{\Q}=S_{\Q}/(f-\varphi f \mid f \in S^{W_G}_+)_{\Q}.
$$
Now by Theorem \ref{ThInv} we know $A^*_L(G)_{\Q}=A^*_T(G)_{\Q}^{W_L}$.
Since $S_{\Q}^{W_L} \hookrightarrow S_{\Q}$ is finite free (\cite[Theorem 2 (d)]{De}) it is also faithfully flat. Hence we obtain
$S_{\Q}^{W_L} \cap IS_{\Q}=IS_{\Q}^{W_L}$ and the assertion follows.
\end{proof}

In the following  we will write $c_i$ for the $i$-th elementary symmetric polynomial in the variables $t_1,\ldots,t_h$ and $c_i^{(j,k)}$ will denote the $i$-th 
elementary symmetric polynomial in the variables $t_j,\ldots,t_k$, where $1 \leq j < k \leq h$ and $1 \leq i \leq k-j+1$.
We then have $\mathbb{Z}[t_1,\ldots,t_n]^{S_{h-d} \times S_d}= \mathbb{Z}[c_1^{(1,h-d)},\ldots,c_{h-d}^{(1,h-d)},c_1^{(h-d+1,h)},\ldots,c_d^{(h-d+1,h)}]$.

\begin{Theorem}
\label{ThSt}
\begin{align*}
A^*(\disp^{h,d}_1) & =A^*_{\GL_{h-d} \times \GL_d}(\GL_h) \\
                   & = \mathbb{Z}[t_1,\ldots,t_n]^{S_{h-d} \times S_d}/((p-1)c_1,\ldots,(p^h-1)c_h),
\end{align*}
where the $c_i^{(1,h-d)}$ resp.\ $c_i^{(h-d+1,h)}$ are the Chern classes of ${}^t\mathcal{L}ie^{\vee}$ resp. $\mathcal{L}ie$.
\end{Theorem}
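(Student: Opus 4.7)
The plan is to reduce $A^*(\disp_1^{h,d})$ step by step to the presentation $[\GL_h / (\GL_{h-d}\times\GL_d)]$ with a Frobenius-twisted conjugation action, and then apply Proposition~\ref{PropChowConj}. First, by Theorem~\ref{ThRepDisp} we have $\disp_1^{h,d} = [\GL_h/G_1^{h,d}]$ (note $X_1^{h,d}=\GL_h$). The multiplication rule in Remark~\ref{RemG_1^{h,d}} shows that the block-diagonal inclusion $(A,D)\mapsto \mathrm{diag}(A,D)$ gives a genuine group-theoretic splitting of $G_1^{h,d} \twoheadrightarrow \GL_{h-d}\times\GL_d$, because when $B=C=0$ the twisted cross terms $C\sigma(A')+\sigma(D)C'$ and $AB'+BD'$ vanish, leaving ordinary matrix multiplication. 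Combined with the fact (Corollary~\ref{CorG_n^{h,d}}) that the kernel $\tilde K_1^{h,d}$ is unipotent and that $G_1^{h,d}$ is special, Lemma~\ref{LeUni2} gives an integral isomorphism
$$A^*(\disp_1^{h,d})=A^*_{G_1^{h,d}}(\GL_h)\;\liso\;A^*_{\GL_{h-d}\times\GL_d}(\GL_h).$$

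Next I identify the induced action. The $G_1^{h,d}$-action is $G\cdot\Psi=G\Psi\sigma_1(G)^{-1}$; when $G=\mathrm{diag}(A,D)$, we have $\sigma_1(G)=\mathrm{diag}(\sigma(A),\sigma(D))$, so the restricted action of $L:=\GL_{h-d}\times\GL_d$ on $\GL_h$ is precisely $\varphi$-conjugation by the Frobenius isogeny $\varphi=\sigma\colon L\to L$. Taking $T$ to be the diagonal maximal torus of $\GL_h$, $\varphi$ preserves $T$, so one may choose $g_0=1$ in Proposition~\ref{PropChowConj}; thus $\tilde\varphi$ is the Frobenius on $T$, acting on characters $t_i\in\hat T$ by $t_i\mapsto pt_i$. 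Since $G=\GL_h$ is special, the integral version of Proposition~\ref{PropChowConj} applies, yielding
$$A^*_L(\GL_h)=\Z[t_1,\ldots,t_h]^{S_{h-d}\times S_d}\big/\bigl(f-\tilde\varphi f\;\big|\;f\in\Z[t_1,\ldots,t_h]^{S_h}_+\bigr).$$
The Weyl invariants $\Z[t_1,\ldots,t_h]^{S_h}$ are freely generated by $c_1,\ldots,c_h$, and since $c_i$ is homogeneous of degree $i$, $\tilde\varphi c_i = p^i c_i$. Hence the defining ideal is generated by $(p^i-1)c_i$ for $i=1,\ldots,h$, giving the stated presentation.

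Finally, to match Chern roots with $\mathcal{L}ie$ and ${}^t\mathcal{L}ie^{\vee}$: under the presentation $\pi_1^{h,d}$, a truncated display $(L,T,\Psi)$ has $L=W_1(R)^{h-d}$ corresponding to the first $h-d$ coordinates and $T=W_1(R)^d$ to the last $d$; the defining vector bundles $^t\mathcal{L}ie^{\vee}=L/I_{1,R}L$ and $\mathcal{L}ie=T/I_{1,R}T$ on $[\GL_h/G_1^{h,d}]$ are thus pulled back from the two factor representations of $\GL_{h-d}\times\GL_d$ on $R^{h-d}$ and $R^d$ respectively. Their Chern roots are then $t_1,\ldots,t_{h-d}$ and $t_{h-d+1},\ldots,t_h$, matching the statement. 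The main obstacle is verifying that the restricted action really is $\varphi$-conjugation in the precise sense needed by Proposition~\ref{PropChowConj} (so that $\tilde\varphi$ acts by multiplication by $p$ on each $t_i$), together with confirming the splitting of $G_1^{h,d}$ given the twisted multiplication law; once these are in place the computation is essentially a direct application of the cited proposition.
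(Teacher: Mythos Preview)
Your proof is correct and follows essentially the same approach as the paper: reduce from $G_1^{h,d}$ to $\GL_{h-d}\times\GL_d$ via the split extension and Lemma~\ref{LeUni2} (using that $G_1^{h,d}$ is special), then apply Proposition~\ref{PropChowConj} to the Frobenius-conjugation action, and finally identify the Chern classes by recognizing $\mathcal{L}ie$ and ${}^t\mathcal{L}ie^{\vee}$ as pullbacks of the standard representations of the two factors. You are somewhat more explicit than the paper in verifying that the restricted action is genuine $\sigma$-conjugation and that $\tilde\varphi$ acts by $t_i\mapsto pt_i$, but the structure of the argument is the same.
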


\begin{proof}
We have that $G_1^{h,d}$ is a split extension of the group $\GL_{h-d} \times \GL_{d}$ by the unipotent
group $\{ \left(
  \begin{array}{ c c }
     E_{h-d} & * \\
     * & E_d
  \end{array} \right) \},$ where $*$ denotes an arbitrary matrix (cf. Remark \ref{RemG_1^{h,d}}). The splitting is given
by the canonical inclusion $\GL_{h-d} \times \GL_d \hookrightarrow G_1^{h,d}$. 
Hence by Lemma \ref{LeUni2} we know 
$$A^*(\disp^{h,d}_1)=A^*_{\GL_{h-d} \times \GL_d}(\GL_h),$$ where the action of $\GL_{h-d} \times \GL_d$ on $\GL_h$ is given
by $\sigma$-conjugation. Since $\GL_{h-d} \times \GL_d$ is special 
with Weyl group $S_{h-d} \times S_d$ we obtain from Proposition \ref{PropChowConj}
$$
A^*_{\GL_{h-d} \times \GL_d}(\GL_h)=\mathbb{Z}[t_1,\ldots,t_n]^{S_{h-d} \times S_d}/((p-1)c_1,\ldots,(p^h-1)c_h).
$$
The assertion that the $c_i^{(1,h-d)}$ resp.\ $c_i^{(h-d+1,h)}$ are the Chern classes of $\mathcal{L}ie$ resp.\ ${}^t\mathcal{L}ie^{\vee}$ follows from
the following simple fact. Let us write $\mathcal{E}_{d}$ resp.\ $\mathcal{E}_{h-d}$ for the vector bundle over $[\ast/\GL_d]$ resp.\ $[\ast/\GL_{h-d}]$ that corresponds to
the canonical representation of $\GL_d$ resp.\ $\GL_{h-d}$. Then $\mathcal{L}ie$ is the pull-back of $\mathcal{E}_d$  under the natural map
$$
\xymatrix{
\disp^{h,d}_1=[GL_h/G_1^{h,d}] \ar[r] & [\ast/(\GL_d \times GL_{h-d})]  \ar[r] & [\ast/GL_d]
}
$$
and similary for ${}^t\mathcal{L}ie^{\vee}$.
\end{proof}

\begin{Corollary}
 $$
\Pic(\disp^{h,d}_1)=\begin{cases} \mathbb{Z}/(p-1)\mathbb{Z} &\mbox{if } d=0,h \\
                        \mathbb{Z} \times \mathbb{Z}/(p-1)\mathbb{Z} &\mbox{else.}
                       \end{cases}
 $$
A generator for the free resp.\ torsion part is $\det(\mathcal{L}ie)$ resp.\ $\det(\mathcal{L}ie \otimes {}^t\mathcal{L}ie^{\vee})$.
\end{Corollary}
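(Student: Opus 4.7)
The plan is to observe that $\Pic(\disp^{h,d}_1) = A^1(\disp^{h,d}_1)$ since $\disp^{h,d}_1$ is a smooth Artin stack, and then to read off the degree-one component of the graded ring presentation established in Theorem \ref{ThSt}. All that remains once this is done is a short piece of integral linear algebra.

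First I would isolate the degree-one part of the invariant polynomial ring $\mathbb{Z}[t_1, \ldots, t_h]^{S_{h-d} \times S_d}$. When $d = 0$ or $d = h$, the symmetric group is the full $S_h$, so this part is simply the free rank-one module generated by $c_1 = t_1 + \cdots + t_h$. When $0 < d < h$, it is the free $\mathbb{Z}$-module of rank two with basis $c_1^{(1, h-d)}$ and $c_1^{(h-d+1, h)}$, related by the identity $c_1 = c_1^{(1, h-d)} + c_1^{(h-d+1, h)}$.

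Next, among the defining relations $(p^i - 1)c_i$ of Theorem \ref{ThSt}, only $(p-1)c_1$ lies in degree one and so contributes. In the extreme cases this gives $A^1 \cong \mathbb{Z}/(p-1)$ immediately, with generator $c_1$ identified with $c_1(\mathcal{L}ie)$ (when $d=h$) or $c_1({}^t\mathcal{L}ie^\vee)$ (when $d=0$). In the intermediate case $0 < d < h$, I would perform the integral change of basis from $\bigl(c_1^{(1, h-d)}, c_1^{(h-d+1, h)}\bigr)$ to $\bigl(c_1, c_1^{(h-d+1, h)}\bigr)$; this has determinant $\pm 1$ and rewrites the single relation as $(p-1)c_1 = 0$, so the quotient becomes $\mathbb{Z}/(p-1) \cdot c_1 \oplus \mathbb{Z} \cdot c_1^{(h-d+1, h)}$.

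Finally I would translate back to line bundles using the last assertion of Theorem \ref{ThSt}: the class $c_1^{(h-d+1, h)}$ equals $c_1(\mathcal{L}ie)$ and so the free summand is generated by $\det(\mathcal{L}ie)$, while $c_1 = c_1(\mathcal{L}ie) + c_1({}^t\mathcal{L}ie^{\vee})$ is the first Chern class of $\det(\mathcal{L}ie)\otimes\det({}^t\mathcal{L}ie^{\vee})$, i.e.\ of $\det(\mathcal{L}ie\otimes{}^t\mathcal{L}ie^{\vee})$ in the notation of the corollary, giving the torsion generator. There is no genuine obstacle in any step; the only minor point requiring care is the bookkeeping of conventions that identify the torsion class $c_1(\mathcal{L}ie) + c_1({}^t\mathcal{L}ie^{\vee})$ with the determinant of the tensor product as written in the statement.
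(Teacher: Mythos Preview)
Your proposal is correct and follows exactly the paper's approach: the paper's proof is the single line ``$\Pic \disp_n^{h,d}=A^1 \disp_n^{h,d}$ by \cite[Corollary 1]{EG2}'', with the degree-one linear algebra you spell out left implicit from Theorem~\ref{ThSt}. Your explicit change of basis and identification of the generators simply fill in the details the paper omits.
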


\begin{proof}
Note $\Pic \disp_n^{h,d}=A^1 \disp_n^{h,d}$ by \cite[Corollary 1]{EG2}.
\end{proof}

\begin{Remark}
There is also a more direct approach to compute 
the above Picard groups. By using a theorem of Rosenlicht, namely that for irreducible varieties $X$ and $Y$ the
natural map $\mathcal{O}(X)^* \times \mathcal{O}(Y)^* \to \mathcal{O}(X \times Y)^*$ is surjective,
it is not difficult to establish the following exact sequence 
$$
\xymatrix{
\mathcal{O}(X)^*/k^* \ar[r] & \hat{G} \ar[r] & \Pic^G(X) \ar[r] & \Pic(X)
}
$$ 
for $G$ connected and $X$ an irreducible $G$-scheme. The first map assigns to a non-vanishing regular function on $X$ its eigenvalue. 
In our case we have $G=\GL_{h-d} \times \GL_d$ and $X=\GL_h$. Then $\mathcal{O}(\GL_h)^*/k^*=\mathbb{Z}$ with generator
given by the determinant and eigenvalue given by the character $(p-1)(det_{\GL_{h-d}}+\det_{\GL_d}) \in \hat{G}$. Since $\Pic(\GL_h)=0$ we again obtain 
$\Pic^{\GL_{h-d} \times \GL_d}(\GL_h)=\mathbb{Z} \times \mathbb{Z}/(p-1)\mathbb{Z}$.
\end{Remark} 

\begin{Remark}
The fact that $(\det \mathcal{L}ie \otimes \det {}^t\mathcal{L}ie^{\vee})^{p-1}$ is trivial can also be seen directly as follows: 
$(\det \mathcal{L}ie \otimes \det {}^t\mathcal{L}ie^{\vee})^{p-1}$ being trivial means that $\det \mathcal{L}ie \otimes \det {}^t\mathcal{L}ie^{\vee}$ 
is fixed under the pull-back of the Frobenius map $Frob \colon \disp^{2,1}_1 \to \disp^{2,1}_1$ assigning to a display $\mathcal{P}$ 
over an $\mathbb{F}_p$-algebra $R$ the display $\mathcal{P}^{\sigma}$ obtained by base change via the Frobenius $\sigma \colon R \to R$.
But by definition of a truncated display we have an isomorphism $\Psi \colon L \oplus T \cong L^{\sigma} \oplus T^{\sigma}$ of $R$-modules. Taking the determinant
of $\Psi$ yields the desired isomorphism $\det L \otimes \det T \cong \det L^{\sigma} \otimes \det T^{\sigma}$.
\end{Remark}

\begin{Remark}
Let us put this result into context by relating it to the corresponding result for elliptic curves. 
Let $\mathcal{M}_{1,1} \to \Spec k$ denote the moduli stack of elliptic curves. A morphism $\Spec R \to \mathcal{M}_{1,1}$
corresponds to a pair $(C \to \Spec R,\sigma)$ where $C \to \Spec R$ is a smooth projective curve of genus $1$ and $\sigma \colon \Spec R \to C$
is a smooth section. We now have the following diagram
$$
\xymatrix{
\mathcal{M}_{1,1} \ar[r] & BT^{h=2,d=1} \ar[d] \ar[r]^{\phi} & \disp^{h=2,d=1} \ar[d] \\
&  BT_{n=1}^{h=2,d=1} \ar[r]_{\phi_1} & \disp^{h=2,d=1}_{n=1}
}
$$
where $\mathcal{M}_{1,1} \to BT^{h=2,d=1}$ sends an elliptic curve $C$ to its associated Barsotti-Tate group $C[p^{\infty}]$.
Let us consider the pull-back map $A^*(\disp^{2,1}_1) \to A^*(\mathcal{M}_{1,1})$. In characteristic $p$ different from $2$ and $3$ Edidin and Graham computed
$A^*(\mathcal{M}_{1,1})=\mathbb{Z}[t]/(12t)$, where $t$ is given by the first Chern class of the Hodge bundle on $\mathcal{M}_{1,1}$ 
(\cite[Proposition 21]{EG2}). 

By construction of the truncated display functor the pull-back of $\mathcal{L}ie$ to $\mathcal{M}_{1,1}$ is the dual of the Hodge bundle on $\mathcal{M}_{1,1}$.
Since the dual of an elliptic curve is the elliptic curve it follows from Remark \ref{RemVB} that the pull-back of 
$^t \mathcal{L}ie^{\vee}$ is given by the Hodge bundle. Hence $A^*(\disp^{2,1}_1) \to A^*(\mathcal{M}_{1,1})$ is the map
$$
\mathbb{Z}[t_1,t_2]/((p-1)c_1,(p^2-1)c_2) \to \mathbb{Z}[t]/(12t)
$$
that sends $t_1$ to $-t$ and $t_2$ to $t$. Note that $p^2-1$ is divisible by $12$ if and only if $p \geq 5$. In particular,
there can be no such map for $p=2,3$, and we deduce that the description $A^*(\mathcal{M}_{1,1})=\mathbb{Z}[t]/(12t)$ does not hold in characteristic $2$ and $3$.
\end{Remark}

\subsection{The Chow Ring of the Stack of G-Zips} 
Let us first consider the case of F-zips introduced in \cite{MW}. We denote by $\fzip$ the stack of F-zips over a field $k$ of characteristic $p>0$. For $S$ a $k$-scheme $\fzip(S)$ is the groupoid of F-zips over $S$. If $\tau \colon \mathbb{Z} \to \mathbb{Z}_{\geq 0}$ is a function with finite support
we denote by $\fzip^{\tau}$ the open and closed substack of F-zips of type $\tau$. Note that
$$\fzip=\coprod_{\tau}\fzip^{\tau}.$$
The stacks $\fzip^{\tau}$ are smooth Artin algebraic stacks over $k$ which follows for example from the following
representation as a quotient stack.
Let $X_{\tau}$ denote the $k$-scheme whose $S$-valued points are given by
$$X_{\tau}(S)=\{\underline{M}=(M,C^{\bullet},D_{\bullet},\varphi_{\bullet}) \mid \underline{M} \text{ F-zip of type } \tau, M=\mathcal{O}_S^h\}.$$
This is a smooth scheme of dimension $h^2$. Here $h=\sum_{i \in \mathbb{Z}} \tau(i)$ is also called the height of $\underline{M}$.
The group $\GL_h$ acts on $X_{\tau}$ by
$$ G \cdot \underline{M}=(\mathcal{O}_S^h,G(C^{\bullet}),G(D_{\bullet}),G\varphi_{\bullet}(G^{-1})^{\sigma}).$$
It is easy to see that two F-zips over $S$ of the above form are isomorphic if and only if they lie in the same $\GL_h(S)$-orbit. Thus
$$ \fzip^{\tau}=[X_{\tau}/\GL_h]. $$
An F-zip $\underline{M}$ of type $\tau$ with support in $\{0,1\}$ over an $\F_p$-algebra $R$  is just a tuple 
$$\underline{M}=(M,C,D,\varphi_0,\varphi_1),$$
where $M$ is a projective $R$-module with submodules $C$ and $D$, which are direct summands of $M$ and isomorphisms
$$\varphi_0 \colon C^{\sigma} \to M/D, \quad \varphi_1 \colon (M/C)^{\sigma} \to D.$$ 

\begin{Lemma}
\label{LeDispzip}
Let $R$ be an $\mathbb{F}_p$-algebra. Then we have an equivalence of categories
$$ \disp_1(R) \to \coprod_{\tau,\supp(\tau) \in \{0,1\}} \fzip^{\tau}(R) $$
given in the following way
$$ (L,T,\Psi) \mapsto (L \oplus T,T, \Psi^{\sigma}(L^{\sigma}),\Psi^{\sigma} \mid_{T^{\sigma}},\Psi^{\sigma}\mid_{L^{\sigma}}).$$
The above assignment commutes with pulling back. In particular, we get an isomorphism of stacks
$$ \fzip^{\tau} \cong \disp^{\tau(0)+\tau(1),\tau(1)}_1$$
for every type $\tau$ with support lying in $\{0,1\}$.
\end{Lemma}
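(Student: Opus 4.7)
The assignment is already written down, so the main tasks are: check it actually lands in F-zips of the correct type, check functoriality on morphisms, construct (up to isomorphism) a quasi-inverse, and finally upgrade the equivalence of groupoids to an isomorphism of stacks using that everything in sight is natural in $R$.

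First I would verify well-definedness of the forward functor. Given $(L,T,\Psi)\in\disp_1(R)$, put $M=L\oplus T$, $C=T$, $D=\Psi^\sigma(L^\sigma)$. Clearly $M$ is projective and $C$ is a direct summand. Since $\Psi^\sigma:L^\sigma\oplus T^\sigma\to M$ is an isomorphism, $D=\Psi^\sigma(L^\sigma)$ is a direct summand with complement $\Psi^\sigma(T^\sigma)$; in particular the restrictions $\Psi^\sigma|_{L^\sigma}:L^\sigma\to D$ and $\Psi^\sigma|_{T^\sigma}:T^\sigma\to M$ are injective, and composing the latter with the projection $M\to M/D$ produces an isomorphism $T^\sigma\xrightarrow{\sim}M/D$ (since $\Psi^\sigma(T^\sigma)$ is a complement of $D$). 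Together with $L^\sigma=(M/T)^\sigma=(M/C)^\sigma$, this gives the two required isomorphisms $\varphi_0$ and $\varphi_1$. The type is read off from the ranks of $C$ and $M/C$, giving $\tau(1)=d$ and $\tau(0)=h-d$.

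Next I would spell out the essential surjectivity. Given an F-zip $(M,C,D,\varphi_0,\varphi_1)$ with support in $\{0,1\}$, set $T=C$ and choose a complement $L\subset M$ so that $M=L\oplus T$ (possible because $C$ is a direct summand); the projection $L\hookrightarrow M\to M/C$ is an isomorphism, which lets us identify $L^\sigma$ with $(M/C)^\sigma$. Similarly choose a complement $D'$ of $D$ in $M$, so $D'\xrightarrow{\sim}M/D$, and lift $\varphi_0$ to a map $\tilde\varphi_0:T^\sigma\to D'\subset M$. Define an $R$-linear map $\Psi^\sigma:L^\sigma\oplus T^\sigma\to L\oplus T=M$ by $\varphi_1$ on $L^\sigma$ and $\tilde\varphi_0$ on $T^\sigma$, and let $\Psi$ be the corresponding $\sigma$-linear map on $L\oplus T$. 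Because $\varphi_1(L^\sigma)=D$ and $\tilde\varphi_0(T^\sigma)=D'$ form complementary direct summands, $\Psi^\sigma$ is an isomorphism, so $(L,T,\Psi)$ is a truncated display of level $1$. One then checks directly that the forward functor sends this to an F-zip canonically isomorphic to the given one, and that different choices of $L$ and $D'$ produce displays related by an isomorphism of displays.

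For full faithfulness I would unpack both sides of
\[
\Hom_{\disp_1(R)}\bigl((L,T,\Psi),(L',T',\Psi')\bigr)\longrightarrow \Hom_{\fzip^\tau(R)}\bigl(\underline M,\underline M'\bigr).
\]
A display morphism is a matrix $\left(\begin{smallmatrix}A&B\\C&D\end{smallmatrix}\right)$ (note: here $n=1$ so the off-diagonal block $C$ lands in $T'\otimes_R I_{2,R}\cong T'$ via $\sigma_1$) making the two diagrams for $F$ and $F_1$ commute; an F-zip morphism is an $R$-linear map $M\to M'$ preserving $C,D$ and compatible with the $\varphi_i$ after applying $\sigma$. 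Matching the two block by block reduces to a direct comparison of the defining relations, and the key point is that the condition involving $F$ and the one involving $F_1$ together encode exactly the compatibility of a morphism of F-zips with both $\varphi_0$ and $\varphi_1$. This verification is the part I expect to be the most tedious, because the translation between the $\sigma_1$-twist in the display morphism definition and the honest $\sigma$-twist in the F-zip definition has to be handled carefully; everywhere else the argument is essentially formal.

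Finally, every construction above is manifestly natural in $R$, so the forward functor is a morphism of prestacks. Since it is objectwise an equivalence, the induced morphism on stackifications is an equivalence, and using the quotient presentation $\disp_1^{h,d}=[X_1^{h,d}/G_1^{h,d}]$ from Theorem \ref{ThRepDisp} together with the parallel presentation $\fzip^\tau=[X_\tau/\GL_h]$ gives the claimed isomorphism $\fzip^\tau\cong\disp_1^{\tau(0)+\tau(1),\tau(1)}$.
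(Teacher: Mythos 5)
Your proposal is correct and follows essentially the same route as the paper, whose entire proof consists of exhibiting the quasi-inverse $(M,C,D,\varphi_0,\varphi_1)\mapsto(C,M/C,\varphi_0\oplus\varphi_1)$; your construction via a chosen complement $L$ of $C$ (and $D'$ of $D$) is just this inverse with the implicit splittings made explicit, together with the routine verifications the paper omits. No genuinely different idea is involved, and no gap.
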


\begin{proof}
An inverse functor is given by the assignment 
$$ (M,C,D,\varphi_0,\varphi_1) \mapsto (C,M/C,\varphi_0 \oplus \varphi_1).$$
\end{proof}

There is more generally the stack of G-zips introduced in \cite{PWZ}. Here G refers to an arbitrary reductive group.
It is defined as follows. Let $\mathcal{Z}$ be an algebraic zip datum, i.e. a $4$-tupel $(G,P,Q,\varphi)$ consisting of a
split reductive group $G$, parabolic subgroups $P$ and $Q$ and an isogeny $\varphi \colon P/R_u(P) \to Q/R_u(Q)$.
To $\mathcal{Z}$ one associates the group
$$ E_{\mathcal{Z}}=\{(p,q) \in P \times Q \mid \varphi(\pi_P(p))=\pi_Q(q) \}.$$
Now $E_{\mathcal{Z}}$ acts on $G$ by the rule 
$$ ((p,q),g) \mapsto pgq^{-1}$$
and the quotient stack $[G/E_{\mathcal{Z}}]$ is called the stack of $G$-zips. If $G$ is connected
$\mathcal{Z}$ is called a connected zip datum (\cite[Definition 3.1]{PWZ}).

Let us recall how the stack of F-zips is just a special case of this construction. For this let 
$\tau \colon \mathbb{Z} \to \mathbb{Z}_{\geq0}$ be a function with finite support, say $i_1 \leq \ldots \leq i_r$.
If we denote $n_k=\tau(i_k)$, then $(n_1, \ldots, n_r)$ defines a partition of $h=\sum_k n_k$. We denote the standard
parabolic of type $(n_1,\ldots,n_r)$ in $\GL_h$ by $P_{\tau}$. 

\begin{Lemma}
\label{LeG-Zip}
Let $\tau \colon \mathbb{Z} \to \mathbb{Z}_{\geq0}$ be a function with finite support and 
$\mathcal{Z}=(\GL_h,P_{\tau},P_{\tau}^-,\sigma)$ be the algebraic zip datum with $P_{\tau}^-$ the opposite parabolic of $P_{\tau}$ 
and $\sigma$ the Frobenius isogeny. Then there is an isomorphism of stacks
$$ [\GL_h/E_{\mathcal{Z}}] \stackrel{\sim}{\rightarrow} \fzip^{\tau}.$$
\end{Lemma}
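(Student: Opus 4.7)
The plan is to recast F-zips of type $\tau$ in torsor language, identify them with $\GL_h$-zips for the datum $\mathcal{Z}$, and then establish the universal presentation of the stack of $\GL_h$-zips as $[\GL_h/E_\mathcal{Z}]$, as in \cite{PWZ}.

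My first step will be to translate F-zips into torsors. Given an F-zip $\underline M=(M,C^\bullet,D_\bullet,\varphi_\bullet)$ of type $\tau$ over $S$, let $I:=\operatorname{Isom}(\mathcal{O}_S^h,M)$ be its frame bundle, a $\GL_h$-torsor. Since $P_\tau$ is by definition the stabilizer in $\GL_h$ of the standard descending flag of type $\tau$ on $\mathcal{O}_S^h$, the descending filtration $C^\bullet$ on $M$ is equivalent to a reduction $I_+\subset I$ of the structure group to $P_\tau$. Similarly $D_\bullet$ corresponds to a reduction $I_-\subset I$ to $P_\tau^-$. Passing to quotients by the unipotent radicals yields $L_\tau$-torsors $I_+/R_u(P_\tau)$ and $I_-/R_u(P_\tau^-)$, where $L_\tau\cong\prod_k \GL_{n_k}$ is the common Levi, and the family $\{\varphi_i\}$ assembles into a single isomorphism $\iota\colon(I_+/R_u(P_\tau))^{(\sigma)}\xrightarrow{\sim} I_-/R_u(P_\tau^-)$ of $L_\tau$-torsors. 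This is precisely the data of a $\GL_h$-zip for $\mathcal{Z}$, so $\fzip^\tau$ becomes canonically equivalent to the stack of $\GL_h$-zips for $\mathcal{Z}$.

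The second step identifies this stack with $[\GL_h/E_\mathcal{Z}]$. Every $\GL_h$-zip $(I,I_+,I_-,\iota)$ over $S$ admits, locally in the fpqc topology on $S$, a trivialization of $I$ carrying $I_+$ to the standard $P_\tau$-reduction, $I_-$ to the standard $P_\tau^-$-reduction, and $\iota$ to the identity isomorphism of $L_\tau$-torsors. The scheme parametrizing such trivializations is an $E_\mathcal{Z}$-torsor over $S$: two trivializations differ by an automorphism of the trivial $\GL_h$-torsor preserving both reductions and compatible with $\iota$, and such automorphisms are exactly the pairs $(p,q)\in P_\tau\times P_\tau^-$ with $\sigma(\pi_{P_\tau}(p))=\pi_{P_\tau^-}(q)$, i.e.\ the elements of $E_\mathcal{Z}$. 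The relative position of such a trivialization inside $I$ defines an $E_\mathcal{Z}$-equivariant map to $\GL_h$; conversely, $g\in \GL_h(S)$ determines the $\GL_h$-zip with $I$ trivial, $I_+=P_\tau$, $I_-=gP_\tau^-$, and $\iota$ the $L_\tau$-torsor isomorphism induced by $g$ via the Frobenius. These constructions will be seen to be mutually inverse.

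The main obstacle is the bookkeeping in the second step: verifying that the action of $E_\mathcal{Z}$ on the scheme of trivializations is simply transitive and matches the action $(p,q)\cdot g = pgq^{-1}$ used to define $[\GL_h/E_\mathcal{Z}]$. This requires careful tracking of the Frobenius twist appearing in $\iota$, which is precisely where the isogeny $\sigma$ in the datum $\mathcal{Z}$ enters the compatibility condition defining $E_\mathcal{Z}$. Once this verification is complete, composing the two steps yields the desired isomorphism $[\GL_h/E_\mathcal{Z}]\xrightarrow{\sim}\fzip^\tau$.
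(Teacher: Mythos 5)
Your Step 1 (the torsor reformulation of F-zips) is fine, but the opening claim of Step 2 is false, and it is the load-bearing step. A single fpqc-local trivialization of $I$ carrying $I_+$ to the standard $P_\tau$-reduction, $I_-$ to the standard $P_\tau^-$-reduction and $\iota$ to the identity would identify the zip locally with the trivial zip attached to $g=e$; but the relative position of the two reductions (equivalently of the filtrations $C^\bullet$ and $D_\bullet$) is an invariant that obstructs this. Concretely, for $h=2$ and $\tau$ supported on $\{0,1\}$, the F-zip of a supersingular elliptic curve has $C=D$, and no trivialization of $M$ can take the pair $(C,D)$ to the standard pair of transverse flags, which would force $C\cap D=0$. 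The bookkeeping that follows is also internally inconsistent: an automorphism of the trivial $\GL_h$-torsor is left multiplication by a single element of $\GL_h$, not a pair $(p,q)$, and the automorphisms preserving both standard reductions and commuting with $\iota=\mathrm{id}$ form essentially the finite group $L_\tau(\F_p)$ (the stabilizer of $e$ under the zip action), not $E_{\mathcal{Z}}$; so even where your scheme of trivializations is nonempty it is not an $E_{\mathcal{Z}}$-torsor, and after normalizing everything there is no ``relative position'' left to map to $\GL_h$.

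The correct torsor-theoretic argument (the one in Pink--Wedhorn--Ziegler that you are implicitly invoking) works with pairs of sections rather than one trivialization: fppf-locally choose $i_+\in I_+$ and $i_-\in I_-$ whose induced trivializations of the two Levi-quotient torsors correspond under $\iota$; the scheme of such pairs is an $E_{\mathcal{Z}}$-torsor via $(p,q)\cdot(i_+,i_-)=(i_+p^{-1},\,i_-q^{-1})$, and the element $g\in\GL_h$ comparing $i_+$ with $i_-$ inside $I$ transforms as $g\mapsto pgq^{-1}$, which produces the map to $[\GL_h/E_{\mathcal{Z}}]$. You should either carry this out or take the paper's shorter route, which bypasses torsors entirely: using the presentation $\fzip^\tau=[X_\tau/\GL_h]$, send $g\in\GL_h(S)$ to the explicit F-zip $\underline M_g=(\mathcal{O}_S^h,\,C^\bullet_\tau,\,g(D_\bullet^{\tau^-}),\,\varphi_\bullet)$ with $\varphi_\bullet$ induced by $g$ viewed $\sigma$-linearly, and check directly that isomorphisms $\underline M_g\to\underline M_{g'}$ are exactly given by $p$ for pairs $(p,q)\in E_{\mathcal{Z}}$ with $g'=pgq^{-1}$, the compatibility with $\varphi_\bullet$ being precisely the condition $\sigma(\pi(p))=\pi(q)$.
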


\begin{proof}
Let $S$ be an $k$-scheme. We denote by $C^{\bullet}_{\tau}$ the descending filtration 
$$
C^{\bullet}_{\tau}=\mathcal{O}_S^h \supset \mathcal{O}_S^{n_1+\ldots+n_{r-1}} \supset \ldots \supset \mathcal{O}_S^{n_1} \supset 0
$$
in $\mathcal{O}_S^h$ given by the standard flag of type $(n_1,\ldots,n_r)$ and by $D_{\bullet}^{\tau^-}$ the ascending filtration 
$$
D_{\bullet}^{\tau^-}=0 \subset \mathcal{O}_S^{n_r} \subset \ldots \subset \mathcal{O}_S^{n_r+\ldots+n_2} \subset \mathcal{O}_S^h.
$$
given by the flag of type opposite to $(n_1,\ldots,n_r)$. To $g \in \GL_h(S)$ we assign the F-zip 
$$ \underline{M}_g=(\mathcal{O}_S^h,C^{\bullet}_{\tau},g(D_{\bullet}^{\tau^-}),\varphi_{\bullet}), $$
where $\varphi$ is given by the restriction of $g$ to the succesive quotients of $C^{\bullet}_{\tau}$.
Note that we can consider $g$ as a $\sigma$-linear map. 

If $(p,q)$ is an element of $E_\mathcal{Z}$ we get an isomorphim $M_{g} \to M_{pgq^{-1}}$ of F-zips induced by $p$.
The fact that $p$ commutes with the $\varphi_i$ is exactly the condition $\sigma(\pi(p))=\pi(q)$. 
On the other hand if an isomorphism $p \colon M_g \to M_{g'}$ of F-zips is given, we see that $g'^{-1}pg$ preserves
the flag of type opposite to $(n_1,\ldots,n_r)$. Thus $q=g'^{-1}pg \in P_{\tau}^-$ and again the compatibility of $p$ with the
$\varphi_i$ implies the condition $\sigma(\pi(p))=\pi(q)$.
\end{proof}

We can also use Proposition \ref{PropChowConj} to say something about the Chow ring of the stack of $G$-zips for an arbitrary 
connceted algebraic zip datum.

\begin{Definition}
 We call an algebraic zip datum $\mathcal{Z}=(G,P,Q,\varphi)$ special, if $G$ is special.
\end{Definition}

\begin{Theorem}
\label{ThGzip}
Let $\mathcal{Z}=(G,P,Q,\varphi)$ be a connected algebraic zip datum. Let $W_G=W(G,T)$ be the Weyl group of $G$ and $W_L=W(L,T)$ be the Weyl group of a Levi component 
$L$ of $P$ w.r.t. a split maximal torus $T \subset L$ of $G$. Let $g_0 \in G(k)$ such that $\varphi(T)={}^{g_0}T$ and let $\tilde{\varphi} \colon T \to T$ 
denote the composition of $\varphi$ followed by conjugation with $g_0^{-1}$. Then $\tilde{\varphi}$ induces an action on $S=\Sym(\hat{T})$ that 
we will also denote by $\tilde{\varphi}$. We then have
$$A^*([E_{\mathcal{Z}} /G])_{\mathbb{Q}}=S^{W_L}_{\Q}/(f-\tilde{\varphi} f \mid f \in S_+^{W_G})_{\Q}.$$
If $\mathcal{Z}$ is special we have
$$
A^*([E_{\mathcal{Z}} / G])=S^{W_L}/(f-\tilde{\varphi} f \mid f \in S_+^{W_G}).
$$
(Note that the action of $\tilde{\varphi}$ on $S^{W_G}$ is independent of the choice of $g_0$ since two choices differ by an element of $N_G(T)$.)
\end{Theorem}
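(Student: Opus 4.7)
The plan is to extend the strategy used in the proof of Theorem \ref{ThSt} to this more general setting: we first realize $E_{\mathcal{Z}}$ as a split extension of $L$ by a unipotent group, then use Lemma \ref{LeUni2} to reduce to the $\varphi$-conjugation action of $L$ on $G$, and finally invoke Proposition \ref{PropChowConj}.

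Concretely, choose a Levi subgroup $M \subset Q$ such that, under the identifications $L \cong P/R_u(P)$ and $M \cong Q/R_u(Q)$, the isogeny $\varphi$ restricts to an isogeny $L \to M$ which we continue to denote by $\varphi$. Then
$$\iota \colon L \to E_{\mathcal{Z}}, \qquad l \mapsto (l, \varphi(l)),$$
is a group homomorphism that splits the projection $E_{\mathcal{Z}} \to L$, $(p,q) \mapsto \pi_P(p)$, whose kernel is $R_u(P) \times R_u(Q)$. Hence $E_{\mathcal{Z}}$ is a split extension of $L$ by the unipotent group $R_u(P) \times R_u(Q)$, and under $\iota$ the induced $L$-action on $G$ is precisely the $\varphi$-conjugation $l \cdot g = l g \varphi(l)^{-1}$.

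Since $G$ is normal and quasi-projective, Lemma \ref{LeAdm} makes it an admissible scheme for both group actions. Lemma \ref{LeUni2} then furnishes the rational isomorphism
$$A^*([G/E_{\mathcal{Z}}])_{\mathbb{Q}} \iso A^*([G/L])_{\mathbb{Q}},$$
where $L$ acts by $\varphi$-conjugation. In the special case, $G$ being special forces $L$ and hence the semidirect product $E_{\mathcal{Z}} = L \ltimes (R_u(P) \times R_u(Q))$ to be special, so the isomorphism holds integrally by the second half of Lemma \ref{LeUni2}. Applying Proposition \ref{PropChowConj} to the $\varphi$-conjugation action of $L$ on $G$ then yields the asserted descriptions in both the rational and special cases.

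The main obstacle is the structural identification of $E_{\mathcal{Z}}$ as $L \ltimes (R_u(P) \times R_u(Q))$, which rests on choosing the Levi $M \subset Q$ compatibly with $\varphi$ so that $\iota$ is a well-defined homomorphism. Once this is in place, the rest of the argument is a direct combination of Lemma \ref{LeUni2} and Proposition \ref{PropChowConj}.
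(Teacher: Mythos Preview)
Your proof is correct and follows essentially the same route as the paper: identify $E_{\mathcal{Z}}$ as a split extension of $L$ by $R_u(P)\times R_u(Q)$ via $l\mapsto(l,\varphi(l))$, apply Lemma~\ref{LeUni2} to reduce to the $\varphi$-conjugation action of $L$ on $G$, and finish with Proposition~\ref{PropChowConj}. Your added remarks (the choice of a compatible Levi $M\subset Q$, the admissibility via Lemma~\ref{LeAdm}, and the observation that $G$ special forces $E_{\mathcal{Z}}$ special) spell out points the paper leaves implicit, but the structure of the argument is the same.
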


\begin{proof}
By definition of the group $E_{\mathcal{Z}}$ we have a split exact sequence
$$
\xymatrix{
0 \ar[r] & R_u(P) \times R_u(Q) \ar[r] & E_{\mathcal{Z}} \ar[r] & L \ar[r] & 0,
}
$$
where the splitting is given by $L \hookrightarrow E_{\mathcal{Z}}$, $l \mapsto (l,\varphi(l))$. From Lemma \ref{LeUni2} we deduce 
$$A^*([E_{\mathcal{Z}} / G])_{\Q}=A^*_L(G)_{\Q},$$
where the action of $L$ on $G$ is given by $\varphi$-conjugation. If $G$ is special the above equality holds over $\Z$.
We conclude by Proposition \ref{PropChowConj}.
\end{proof}

\begin{Example}
We consider the case $\mathcal{Z}=(Sp(2n),P,P^{-},\sigma)$, where $\sigma$ denotes the $q$-th power Frobenius.
Recall that $Sp(2n)$ is special and the Weyl group of $Sp(2n)$ is the wreath product 
$S_n \wr (\mathbb{Z}/2\mathbb{Z})=S_n \ltimes (\mathbb{Z}/2\mathbb{Z})^n$. It acts on $\Sym(\hat{T})=\mathbb{Z}[t_1,\ldots,t_n]$ in the following way.
$S_n$ acts by permuting the variables $t_1,\ldots,t_n$ and after identifying $\mathbb{Z}/2\mathbb{Z}=\{\pm 1 \}$ an element 
$(\varepsilon_1,\ldots,\varepsilon_n)\in\mathbb{Z}/2\mathbb{Z}^n$ acts by $(\varepsilon_1,\ldots,\varepsilon_n)\cdot t_i=\varepsilon_i t_i$. 

If $P$ is a Borel we obtain from the above theorem
$$
A^*([E_{\mathcal{Z}} / Sp(2n)]))=\mathbb{Z}[t_1,\ldots,t_n]/((q^2-1)c_1(\underline{t}^2),\ldots,(q^{2n}-1)c_n(\underline{t}^2)).
$$
If $P$ is the maximal parabolic subgroup fixing a maximal isotropic subspace then $L=\GL_n$ and $W_L=S_n$ and therefore
$$
A^*([E_{\mathcal{Z}} / Sp(2n)])=\Z[c_1,\ldots,c_n]/((q^2-1)c_1(\underline{t}^2),\ldots,(q^{2n}-1)c_n(\underline{t}^2)).
$$
\end{Example}

It turns out that a $\Q$-basis of the Chow ring of the stack of G-zips is given by the closures
of the orbits of the action of $E_{\mathcal{Z}}$ on $G$. To prove this let us introduce the naive Chow group of a quotient stack.

\begin{Definition}
 Let $G$ be an algebraic group and $X$ be a $G$-scheme. Let $Z_*([X/G])$ be the free abelian group generated by the set of $G$-invariant subvarieties of 
$X$ graded by dimension. Let $W_i([X/G])$ be the group $\bigoplus_Y k(Y)^G$, where the sum goes over all $G$-invariant subvarieties of $X$ of dimension $i+1$.
There is the usual divisor map $div \colon W_i([X/G]) \to Z_i([X/G])$ and we define the $i$-th naive Chow group of $[X/G]$ to be $$A^o_i[X/G]=Z_i([X/G])/div(W_i([X/G])).$$
\end{Definition}

\begin{Remark}
 There is more generally a definition of naive Chow groups for arbitrary algebraic stacks (\cite[Definition 2.1.4]{Kr}) which
in the case of a quotient stack agrees with the one given above. Thus the above definition is independent of the presentation as a quotient stack. 
\end{Remark}

\begin{Remark}
 There is a natural map $A^o_*[X/G] \to A_*[X/G]$. When $X$ is Deligne-Mumford, i.e. the stabilizer of every point is finite and geometrically reduced, the induced map 
$A^o_*[X/G]_{\mathbb{Q}} \to A_*[X/G]_{\mathbb{Q}}$ is an isomorphism of groups and an isomorphism of rings if $[X/G]$ is smooth (\cite[Theorem 2.1.12 (ii)]{Kr}).
\end{Remark}

The stack of $G$-zips is not Deligne-Mumford. However, we still have the following proposition.

\begin{Proposition}
 Let $G$ be a connected algebraic group and $X$ be an admissible $G$-scheme (cf. Definition \ref{DefAdm}) with finitely many orbits such that the stabilizer 
 of every point is an extension of a finite group by a unipotent group. Then $A^o_*[X/G]_{\mathbb{Q}} \to A_*[X/G]_{\mathbb{Q}}$ is an isomorphism.
\end{Proposition}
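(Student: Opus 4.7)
The plan is to argue by induction on the number $n$ of $G$-orbits in $X$, comparing the two localization sequences.

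For the base case $n=1$, we have $X = O \cong G/G_x$, so $[O/G] \cong BG_x$. The naive Chow group is easy: the only $G$-invariant subvariety of $O$ is $O$ itself, and no higher-dimensional ones are available to produce relations, so $A^o_*([O/G])_\mathbb{Q} = \mathbb{Q}\cdot[O]$. For the ordinary Chow group, write $G_x$ as an extension $1 \to N \to G_x \to F \to 1$ with $N$ unipotent and $F$ finite. Applying Corollary \ref{CorExtrat} (i) to this extension (with trivial action on a point) reduces $A_*(BG_x)_\mathbb{Q}$ to $A_*(BF)_\mathbb{Q}$, and Lemma \ref{LeGal} then identifies this with $\mathbb{Q}$, generated by the fundamental class. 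The comparison map sends $[O]$ to this class, so it is an isomorphism.

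For the inductive step, I will choose a closed $G$-orbit $O \subset X$ (one of minimal dimension, which exists because there are only finitely many orbits) and set $U = X - O$. Then $U$ is again an admissible $G$-scheme, with $n-1$ orbits, so by the inductive hypothesis the comparison map for $[U/G]$ is an isomorphism. The definition of naive Chow groups gives a right-exact localization sequence
$$A^o_*([O/G])_\mathbb{Q} \to A^o_*([X/G])_\mathbb{Q} \to A^o_*([U/G])_\mathbb{Q} \to 0,$$
while the long localization exact sequence for higher equivariant Chow groups yields
$$\cdots \to A_*([U/G],1)_\mathbb{Q} \xrightarrow{\partial} A_*([O/G])_\mathbb{Q} \to A_*([X/G])_\mathbb{Q} \to A_*([U/G])_\mathbb{Q} \to 0.$$
In the resulting commutative ladder, the left vertical map is an isomorphism by the base case and the right one by induction, so a standard four-lemma diagram chase gives surjectivity of the middle arrow $A^o_*([X/G])_\mathbb{Q} \to A_*([X/G])_\mathbb{Q}$.

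The hard part is injectivity. Since $A^o_*([X/G])_\mathbb{Q}$ is spanned by the $n$ orbit-closure classes $[\overline{O}_i]$, it suffices to show that $\dim_\mathbb{Q} A_*([X/G])_\mathbb{Q} \geq n$; surjectivity then forces both dimensions to equal $n$ and the comparison map to be an isomorphism. The filtration of $[X/G]$ by orbit closures, together with the base-case computation $A_*(BG_{x_i})_\mathbb{Q} = \mathbb{Q}$, already yields the matching upper bound $\dim_\mathbb{Q} A_*([X/G])_\mathbb{Q} \leq n$. To get the lower bound one must check that at each step of the filtration the push-forward $A_*([O/G])_\mathbb{Q} \to A_*([X/G])_\mathbb{Q}$ is injective, equivalently that the boundary $\partial$ vanishes rationally. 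Since $A_*([O/G])_\mathbb{Q}$ is concentrated in a single degree, this reduces to controlling one graded piece of the higher Chow group $A_*([U/G],1)_\mathbb{Q}$; I expect this to be the main technical obstacle, requiring either a careful analysis of the higher Chow groups of the lower strata, or a splitting argument coming from the cycle classes of the orbit closures themselves.
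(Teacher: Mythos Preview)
Your induction is set up in the wrong direction, and the ``main technical obstacle'' you anticipate is an artifact of that choice. You remove a \emph{closed} orbit $O$ and are then forced to control $A_*([U/G],1)_{\mathbb{Q}}$ for the open complement $U$, which still has $n-1$ orbits; there is no easy description of this group. The paper instead removes the \emph{open} orbit $U$ and lets $W=X\setminus U$ carry the induction. Now the open piece is a single orbit, so $[U/G]\cong BG_x$, and the same computation you did in your base case---but carried out for \emph{all} $m$ via Corollary~\ref{CorExt2} (or \ref{CorExtrat})---shows that the pull-back of the structure morphism $[U/G]\to\Spec k$ is an isomorphism on $A_*(-,m)_{\mathbb{Q}}$ for every $m$. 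Factoring $[X/G]\to[U/G]$ through $\Spec k$ then gives surjectivity of $A_*([X/G],1)_{\mathbb{Q}}\to A_*([U/G],1)_{\mathbb{Q}}$, hence the bottom localization sequence is short exact on the left. Combined with the observation that $A^o_*[X/G]=\bigoplus_Z\mathbb{Z}[\bar Z]$ (there are no relations, since any $G$-invariant rational function on an orbit closure is constant on the dense orbit), the top sequence is also short exact, and the five lemma finishes the proof with no residual obstacle.

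A minor point: in your base case, Lemma~\ref{LeGal} concerns Galois descent and does not compute $A_*(BF)_{\mathbb{Q}}$ for finite $F$; the relevant input is Corollary~\ref{CorExt1}(ii) (or \ref{CorExtrat}(ii)) applied to $\{0\}\hookrightarrow F$.
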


\begin{proof}
 We prove this by induction on the number of orbits. Let $U$ denote the open $G$-orbit and $W$ its complement. We have a commutative diagram
$$
\xymatrix{
0 \ar[r] & A^o_*[W/G]_{\mathbb{Q}} \ar[r] \ar[d] & A^o_*[X/G]_{\mathbb{Q}} \ar[d] \ar[r] & A^o_*[U/G]_{\mathbb{Q}} \ar[r] \ar[d] & 0 \\
0 \ar[r] &  A_*[W/G]_{\mathbb{Q}} \ar[r] &  A_*[X/G]_{\mathbb{Q}} \ar[r] &  A_*[U/G]_{\mathbb{Q}} \ar[r] & 0
}
$$
and we claim that the rows of this diagram are exact. Since there are only finitely many orbits every $G$-invariant subvariety $Y$ of $X$ is the closure of a $G$-orbit.
Since $Y$ admits a dense $G$-invariant subset every $G$-invariant rational function on $Y$ is constant. It follows $A^o_*[X/G]=\bigoplus_Z \mathbb{Z}[\bar{Z}]$ where the sum
goes over all $G$-orbits $Z$ of $X$. From this we obtain the exactness of the top row. For the exactness of the lower row we need to see that the pull-back map
$A_*([X/G],1)_{\mathbb{Q}} \to A_*([U/G],1)_{\mathbb{Q}}$ is surjective. But $[U/G]$ is isomorphic to the classifying space of the stablizer group scheme of $U$.
By assumption and Corollary \ref{CorExt2} we get that $A_*([U/G],m)_{\mathbb{Q}} \to A_*(B\{0\},m)_{\mathbb{Q}} $ is an isomorphism. Equivalently the pull-back of the structure
morphism $[U/G] \to \Spec k$ is an isomorphism for the higher Chow groups with rational coefficents and hence the claim follows. 

Now the right vertical arrow is an isomorphism since both groups are isomorphic to $\mathbb{Q}$. By induction we may assume that the first vertical arrow is also an
isomorphism. 
\end{proof}

Recall that an algebraic zip datum $\mathcal{Z}$ is called orbitally finite if $G$ has finitely many $E_{\mathcal{Z}}$-orbits (\cite[Definition 7.2]{PWZ}).

\begin{Theorem}
\label{ThBasis}
 Let $\mathcal{Z}$ be an orbitally finite connected algebraic zip datum and $[G/E_{\mathcal{Z}}]$ be the corresponding stack of $G$-Zips. 
 Then the following assertions hold.
\begin{equivlist}
 \item $A^o_*[G/E_{\mathcal{Z}}]_{\mathbb{Q}} \to A_*[G/E_{\mathcal{Z}}]_{\mathbb{Q}}$ is an isomorphism.
 \item $A^o_*[G/E_{\mathcal{Z}}]=\bigoplus_Z \mathbb{Z} [\bar{Z}]$ where the sum goes over all orbits $Z$.
\end{equivlist}
In particular, the dimension of $A_*[G/E_{\mathcal{Z}}]_{\mathbb{Q}}$ as a $\mathbb{Q}$-vector space is equal to the number of orbits.
\end{Theorem}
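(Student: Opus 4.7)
The plan is to apply the preceding proposition to $X = G$ with the $E_{\mathcal{Z}}$-action, so the task reduces to verifying the three hypotheses: that $G$ is an admissible $E_{\mathcal{Z}}$-scheme, that there are only finitely many orbits, and that every stabilizer is an extension of a finite group by a unipotent group. Admissibility is immediate from Lemma \ref{LeAdm} because $G$ is affine (hence quasi-projective) and smooth (hence normal). Finiteness of orbits is exactly the orbital finiteness assumption on $\mathcal{Z}$. The stabilizer condition is the substantive input: for any $g \in G$, the stabilizer $\mathrm{Stab}_{E_{\mathcal{Z}}}(g) \subset P \times Q$ projects to a subgroup of $L$ whose connected component is contained in the unipotent radicals, so it is an extension of a finite group by a connected unipotent group. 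This is part of the general structure theory of $G$-zip stabilizers established in \cite{PWZ}.

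Granting these three hypotheses, part (i) follows immediately from the preceding Proposition.

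For part (ii), I would reuse the argument already given in the proof of the preceding Proposition. Because $\mathcal{Z}$ is orbitally finite, every $E_{\mathcal{Z}}$-invariant closed subvariety of $G$ is a union of orbit closures, and in fact each irreducible such subvariety is $\bar{Z}$ for some orbit $Z$ (since $Z$ is open and dense in $\bar{Z}$). Moreover, any $E_{\mathcal{Z}}$-invariant rational function on $\bar{Z}$ is constant, because $Z \subset \bar{Z}$ is a dense $E_{\mathcal{Z}}$-orbit. Therefore the group $W_*([G/E_{\mathcal{Z}}])$ of $E_{\mathcal{Z}}$-invariant rational functions contributes only constants, the divisor map is zero, and one gets $A^o_*[G/E_{\mathcal{Z}}] = \bigoplus_Z \mathbb{Z}[\bar{Z}]$ as claimed.

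Combining (i) and (ii) gives that $A_*[G/E_{\mathcal{Z}}]_{\mathbb{Q}}$ has a $\mathbb{Q}$-basis indexed by the $E_{\mathcal{Z}}$-orbits, proving the ``in particular'' assertion. The main obstacle in this plan is the verification of the stabilizer hypothesis; everything else is a direct quotation of the previous Proposition or a short general argument about orbit closures. Once one knows the stabilizer structure from \cite{PWZ}, the theorem follows formally.
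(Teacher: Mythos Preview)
Your proposal is correct and matches the paper's approach: the paper's proof is a single line invoking \cite[Theorem 8.1]{PWZ} for the stabilizer hypothesis and then applying the preceding proposition, with part (ii) already having been established inside that proposition's proof. Your informal description of why the stabilizers have the required form is a bit garbled (projecting to a subgroup of $L$ and lying in the unipotent radicals are in tension), but since you correctly defer to \cite{PWZ} for the precise statement this does not affect the argument.
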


\begin{proof}
The assumption of the previous proposition on the stabilizer group schemes hold by \cite[Theorem 8.1]{PWZ}.
\end{proof}

\begin{Corollary}
\label{CorDimG-Zip}
 Let $\mathcal{Z}=(G,P,Q,\varphi)$ be a connected algebraic zip datum and $T$ be a split maximal torus of $G$ in a Levi component $L$ of $P$.
 If $\mathcal{Z}$ is orbitally finite the $\Q$-vectorspace $A^*([E_{\mathcal{Z}} / G])_{\Q}$ is finite dimensional of
 dimension $|W_G/W_L|$, where as usual $W_G=W(G,T)$ is the Weyl group of $G$ and $W_L=W(L,T)$ is the Weyl group of $L$.
\end{Corollary}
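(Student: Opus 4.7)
The plan is to combine Theorem \ref{ThBasis} with the classification of $E_{\mathcal{Z}}$-orbits on $G$ from \cite{PWZ}: the former converts the dimension computation into an orbit count, and the latter supplies that count. Both ingredients are already in place, so the proof should be quite short.

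First I would note that $[G/E_{\mathcal{Z}}]$ is smooth (since $G$ is smooth), so the identification $A_*[G/E_{\mathcal{Z}}] = A^*[E_{\mathcal{Z}}/G]$ via \cite[Proposition 19]{EG2} applies. By Theorem \ref{ThBasis}(i) the natural map $A^o_*[G/E_{\mathcal{Z}}]_{\mathbb{Q}} \to A_*[G/E_{\mathcal{Z}}]_{\mathbb{Q}}$ is an isomorphism, and by (ii) the left-hand side is the free $\mathbb{Q}$-vector space on the set of $E_{\mathcal{Z}}$-orbits in $G$. The orbital finiteness hypothesis makes that orbit set finite, so $\dim_{\mathbb{Q}} A^*([E_{\mathcal{Z}}/G])_{\mathbb{Q}}$ is finite and equals the number of $E_{\mathcal{Z}}$-orbits on $G$.

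The second step is to identify this orbit count with $|W_G/W_L|$. For this I would invoke the orbit parameterization of Pink--Wedhorn--Ziegler: for a connected algebraic zip datum, the $E_{\mathcal{Z}}$-orbits on $G$ are in bijection with the set ${}^IW$ of minimal-length representatives of the cosets $W_I \backslash W_G$, where $I$ is the type of $P$ and hence $W_I = W_L$. In the orbitally finite case every element of ${}^IW$ corresponds to an actual orbit, so the count is $|{}^IW| = |W_G/W_L|$, as required.

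There is no substantive obstacle. The only point requiring care is matching the conventions of \cite{PWZ} for the parabolic type $I$ with the Weyl group $W_L$ appearing in the statement, and remembering that on a smooth quotient stack one may freely interchange $A_*$ and $A^*$ when reading off the $\mathbb{Q}$-dimension from Theorem \ref{ThBasis}.
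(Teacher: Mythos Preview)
Your proposal is correct and follows essentially the same route as the paper: apply Theorem \ref{ThBasis} to equate $\dim_{\Q} A^*([E_{\mathcal{Z}}/G])_{\Q}$ with the number of $E_{\mathcal{Z}}$-orbits, then invoke \cite[Theorem 7.5]{PWZ} for the count $|W_G/W_L|$. The extra remarks you make about smoothness and the $A_*$/$A^*$ identification are reasonable clarifications but not points of divergence.
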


\begin{proof}
 By the above theorem $\dim_{\Q} A^*([E_{\mathcal{Z}} / G])_{\Q}$ equals the number of $E_{\mathcal{Z}}$-orbits in $G$.
 This number equals $|W_G/W_L|$ by \cite[Theorem 7.5]{PWZ}.
\end{proof}

In the case of F-zips the above results read as follows.

\begin{Corollary}
\label{CorF-Zip}
 Let $\tau \colon \Z \to \Z_{\geq0}$ be a function with finite support $i_1 \leq \ldots \leq i_r$ and $n_k=\tau(i_k)$. Let $h=\sum_i n_i$ be its height. Then the following
 holds
 \begin{equivlist}
\item $$ A^* \fzip^{\tau} = \mathbb{Z}[t_1,\ldots,t_h]^{S_{n_1} \times \ldots \times S_{n_r}}/((p-1)c_1,\ldots,(p^h-1)c_h)$$
       with  $c_i$ the $i$-th elementary symmetric polynomial in the variables $t_1,\ldots,t_h$.
\item $$ \Pic(\fzip^{\tau})=\mathbb{Z}^{r-1} \times \mathbb{Z}/(p-1)\mathbb{Z}$$
\item $$
      \dim_{\mathbb{Q}} A^*(\fzip^{\tau})_{\mathbb{Q}}= \frac{h!}{n_1! \cdot \ldots \cdot n_r!}
       $$
\end{equivlist}
\end{Corollary}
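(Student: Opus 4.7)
The plan is to combine Lemma \ref{LeG-Zip} with Theorem \ref{ThGzip} and Corollary \ref{CorDimG-Zip}. By Lemma \ref{LeG-Zip} we may identify $\fzip^{\tau}$ with $[\GL_h/E_{\mathcal{Z}}]$ for the algebraic zip datum $\mathcal{Z}=(\GL_h,P_{\tau},P_{\tau}^-,\sigma)$. A Levi component of $P_{\tau}$ is $L=\GL_{n_1}\times\cdots\times\GL_{n_r}$, which contains the standard diagonal torus $T$, and the Weyl groups are $W_G=S_h$ and $W_L=S_{n_1}\times\cdots\times S_{n_r}$.

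For (i) I apply Theorem \ref{ThGzip}. Since $\GL_h$ is special we obtain the integral formula. The Frobenius isogeny $\sigma$ preserves $T$, so we may take $g_0=1$ and $\tilde\varphi=\sigma|_T$; this acts on $S=\Sym(\hat T)=\Z[t_1,\ldots,t_h]$ by $t_i\mapsto p\,t_i$. Hence $\tilde\varphi(c_i)=p^i c_i$, so the relations $f-\tilde\varphi f$ for $f\in S_+^{W_G}=\Z[c_1,\ldots,c_h]_+$ are generated by $(1-p^i)c_i$, which is the same ideal as $((p-1)c_1,\ldots,(p^h-1)c_h)$. This gives the presentation in (i).

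For (ii) recall $\Pic\fzip^{\tau}=A^1\fzip^{\tau}$ by \cite[Corollary 1]{EG2}. The degree-$1$ part of $\Z[t_1,\ldots,t_h]^{W_L}$ is free abelian of rank $r$, with basis the partial first elementary symmetric polynomials
$$f_k=c_1^{(n_1+\cdots+n_{k-1}+1,\,n_1+\cdots+n_k)},\qquad k=1,\ldots,r,$$
and the only degree-$1$ relation is $(p-1)(f_1+\cdots+f_r)=0$. Changing basis to $(f_1+\cdots+f_r,\,f_2,\ldots,f_r)$ one reads off $\Z/(p-1)\Z\oplus\Z^{r-1}$. For (iii) the zip datum $\mathcal{Z}$ is connected and orbitally finite by \cite[Theorem 7.5]{PWZ}, so Corollary \ref{CorDimG-Zip} yields $\dim_{\Q}A^*(\fzip^{\tau})_{\Q}=|W_G/W_L|=h!/(n_1!\cdots n_r!)$. (Alternatively, invert $p$ in (i): the ring becomes the $W_L$-invariants of the coinvariant algebra $\Q[t_1,\ldots,t_h]/(c_1,\ldots,c_h)$ of $S_h$, whose dimension as a $\Q$-vector space is $h!/|W_L|$ by classical invariant theory.)

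No substantive obstacle remains: the proof is an application of the machinery already developed. The only points that require care are correctly computing the action of $\tilde\varphi$ on $\hat T$ (the Frobenius multiplies characters by $p$, giving the numerical factors $p^i-1$) and invoking orbital finiteness of $\mathcal{Z}$ for part (iii), both of which are standard.
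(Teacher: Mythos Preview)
Your argument is correct and is exactly the intended one: the paper states this corollary without proof, as an immediate specialization of Lemma~\ref{LeG-Zip}, Theorem~\ref{ThGzip}, and Corollary~\ref{CorDimG-Zip} to the Frobenius zip datum $\mathcal{Z}=(\GL_h,P_\tau,P_\tau^-,\sigma)$. The only minor quibble is your citation for orbital finiteness in (iii): \cite[Theorem~7.5]{PWZ} gives the orbit parametrization once orbital finiteness is known, while orbital finiteness itself for Frobenius zip data follows from \cite[Proposition~7.3]{PWZ} (the differential of $\sigma$ vanishes); also, in your alternative argument for (iii) you should tensor with $\Q$ rather than merely invert $p$, since $p^i-1$ is not a unit in $\Z[p^{-1}]$.
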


\subsection{The Chow Ring of $BT_n$}
The goal of this section is to prove the following result.
\begin{Theorem}
 \label{ThChowBT_n}
The pull-back $\phi_n^* \colon A^*(\disp_n) \to A^*(BT_n)$ is injective and an isomorphism after inverting $p$. 
\end{Theorem}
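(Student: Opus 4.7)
The plan is to follow the strategy sketched in the introduction for Theorem A: reduce the global claim to a pointwise statement about geometric fibers of $\phi_n$ via noetherian induction, then exploit the fact that those fibers are classifying stacks of infinitesimal $p$-groups. Since $\phi_n \colon BT_n \to \disp_n$ is smooth of some constant relative dimension $r$, I would begin by choosing compatible admissible presentations and replacing the two stacks by suitable mixed spaces, so that $\phi_n$ is realised as a flat morphism $X \to Y$ of quasi-projective schemes of relative dimension $r$, and $\phi_n^*$ becomes the flat pull-back $A_*(Y,m) \to A_*(X,m+r)$.

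The isomorphism after inverting $p$ will be proved by the noetherian/limit argument of Lemma \ref{LeFiber}. This reduces the statement to checking, for every point $y \in Y$, that the pull-back
$$A_{i-h(y)}(\Spec k(y),m)_{(p)} \to A_{i-h(y)+r}(X_y,m)_{(p)}$$
is an isomorphism, where $X_y = \phi_n^{-1}(\Spec k(y))$. Since $\phi_n$ is an equivalence on geometric points, $X_y$ is the classifying stack of a finite group scheme $\Gamma$ over $k(y)$, and after a suitable finite extension $k(y) \to L$ of $p$-power degree the étale part of $\Gamma$ trivialises, leaving an infinitesimal group scheme $\Gamma_L$ of $p$-power order $p^k$. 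By Lemma \ref{LeGal} it suffices to verify the assertion after this base change, and then the canonical cover $\Spec L \to B\Gamma_L$ is finite flat surjective of degree $p^k$. A direct application of Lemma \ref{LeCov} to this cover (with $X = Y' = \Spec L$) shows that $A_*(\Spec L,m) \to A_*(B\Gamma_L,m)$ becomes an isomorphism after inverting $p$, which is exactly what is required. Feeding this back into Lemma \ref{LeFiber} gives the desired isomorphism $A_*(\disp_n,m)_{(p)} \iso A_*(BT_n,m)_{(p)}$.

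For injectivity I would invoke the explicit description of the target of $\phi_n^*$. By Theorem \ref{Thhigherdisp} combined with Theorem \ref{ThSt}, $A^*(\disp_n)$ is a quotient of a polynomial ring over $\Z$ by relations of the shape $(p^i-1)c_i$, and is in particular $\Z$-torsion free, hence $p$-torsion free. If $\alpha \in A^*(\disp_n)$ satisfies $\phi_n^*\alpha=0$, the isomorphism statement yields $p^N\alpha = 0$ in $A^*(\disp_n)$ for some $N$, and torsion-freeness forces $\alpha = 0$.

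The main obstacle I expect is a technical one rather than a conceptual one: the admissible framework of Section 1 is formulated for smooth algebraic groups, whereas the fibre $B\Gamma_L$ involves the \emph{infinitesimal} group $\Gamma_L$. One has to either extend the formalism enough to define $A_*(B\Gamma_L,m)$ intrinsically, or (more economically) circumvent this by working directly with the finite flat cover $\Spec L \to B\Gamma_L$ so that Lemma \ref{LeCov} applies without having to set up equivariant Chow groups for $\Gamma_L$; a parallel care is needed to ensure that the noetherian induction from Lemma \ref{LeFiber}, originally stated for schemes, runs through for the stacky target under $\phi_n$, which is done by passing throughout to the quasi-projective approximations furnished by Lemma \ref{LeAdm}.
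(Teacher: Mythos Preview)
Your overall architecture matches the paper's: reduce via Lemma~\ref{LeFiber} to a fibrewise statement, identify the fibre as (an approximation of) the classifying space of a $p$-power group scheme, and deduce injectivity from the $p$-torsion freeness of $A^*(\disp_n)$ furnished by Theorems~\ref{Thhigherdisp} and~\ref{ThSt}. The paper also handles the ``infinitesimal group vs.\ smooth group'' issue exactly as you anticipate in your last paragraph, namely by staying at the level of the quasi-projective approximations $Z_y$ and never needing equivariant Chow groups for $\Gamma_L$ itself.

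There is, however, a genuine slip in your treatment of the fibre. The finite extension $k(y)\to L$ is not there to trivialise an \'etale part of $\Gamma$; by \cite[Theorem~4.7]{La} the relative automorphism scheme $\underline{\Aut}^o(G)$ is already infinitesimal. The extension is needed because the fibre $\phi_n^{-1}(\Spec k(y))$ is a priori only a \emph{gerbe}: one must produce a truncated BT group over $k(y)$ with the given display, and this is done by passing into the perfect hull $k(y)^{p^{-\infty}}$ and then descending to a finite subextension. Consequently $L/k(y)$ is \emph{purely inseparable}, and your appeal to Lemma~\ref{LeGal} (which is for Galois extensions) is illegitimate. The correct tool, and the one the paper uses, is Lemma~\ref{LeCov}: the map $\Spec L \to \Spec k(y)$ is finite flat surjective of $p$-power degree, so once the fibrewise pull-back is known to be an isomorphism over $L$ after inverting $p$, Lemma~\ref{LeCov} transports this back to $k(y)$. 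With this correction your argument goes through and coincides with the paper's.
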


We know that $\disp_n= \coprod_{d\leq h} \disp_n^{h,d}$ is a decomposition into open and closed substacks. The same holds
for $BT_n$ and the morphism $\phi_n$ maps $BT^{h,d}_n$ to $\disp_n^{h,d}$. It suffices to prove
the theorem for the restriction of $\phi_n$ to $BT^{h,d}_n$.
The following proposition is the crucial point in the proof of Theorem \ref{ThChowBT_n}.

\begin{Proposition}
Let $L$ be a field extension of $k$ and $\Spec L \to \disp_n$ be a morphism. Then there is a finite field extension 
$L'$ of $L$ of $p$ power degree and an infinitesimal commutative group scheme $A$ over $L'$ such that the fiber $\phi_n^{-1}(\Spec L')$ is the classifying space of $A$.
\end{Proposition}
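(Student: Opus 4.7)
My plan is to analyse the fiber $F := \Spec L \times_{\disp_n} BT_n$. Since $\phi_n$ is smooth, $F$ is a smooth algebraic stack of finite type over $L$, and since $\phi_n$ is essentially surjective on $\bar L$-points, $F(\bar L)\neq\emptyset$. The goal is to exhibit a finite extension $L'/L$ of $p$-power degree such that $F_{L'}\cong BA$ for some infinitesimal commutative group scheme $A$ over $L'$.

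First I would analyse the geometric fiber $F_{\bar L}$. Let $\mathcal{P}$ be the truncated display corresponding to the given morphism $\Spec L \to \disp_n$, and pick a BT group $G \in F(\bar L)$, i.e.\ a truncated BT group over $\bar L$ with $\phi_n(G) \cong \mathcal{P}_{\bar L}$. Because $\phi_n$ is fully faithful on $\bar L$-points, $F_{\bar L}\cong BA_{\bar L}$, where $A_{\bar L} = \ker(\Aut(G)\to\Aut(\phi_n(G)))$ has only the trivial $\bar L$-point and is therefore infinitesimal. As a closed subgroup scheme of the finite group scheme $\Aut(G)$, the group $A_{\bar L}$ is finite; any infinitesimal finite group scheme over a field of characteristic $p$ is automatically of $p$-power order. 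Commutativity of $A_{\bar L}$ I would derive from the fact that elements of $A_{\bar L}$ are characterised by the linear-algebraic condition of inducing the identity on the associated display; this realises $A_{\bar L}$ as an additive subfunctor of a Hom-module, which is automatically abelian.

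Next I would descend to a finite extension of $p$-power degree. Since $F(\bar L)$ has a unique isomorphism class, the Galois action of $\Gal(L^{\mathrm{sep}}/L)$ on the set of isomorphism classes in $F(L^{\mathrm{sep}})$ is trivial, so no separable obstruction to descending an object from $\bar L$ to $L$ remains; the band of $F$ descends to some $L$-form $A_L$ of $A_{\bar L}$, and $F$ is classified by a class in $H^2_{\mathrm{fppf}}(L,A_L)$. A d\'evissage of the finite infinitesimal commutative group $A_L$ along its Frobenius and Verschiebung kernels reduces the problem to killing classes in $H^2_{\mathrm{fppf}}(L,\alpha_p)$ and $H^2_{\mathrm{fppf}}(L,\mu_p)$, which are each split by field extensions of degree $p$. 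Iterating, one obtains $L'/L$ of $p$-power degree over which $F_{L'}$ acquires a point; the automorphism scheme of this point is then the desired infinitesimal commutative $A$ over $L'$, of $p$-power order.

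The main obstacle is making the last step rigorous, in particular tracking that the \emph{total} degree of $L'/L$ is a $p$-power rather than merely finite. A concrete alternative, which avoids invoking the general d\'evissage, would exploit the presentation $\disp_n^{h,d}=[X_n^{h,d}/G_n^{h,d}]$ with $G_n^{h,d}$ special (Corollary~\ref{CorG_n^{h,d}}): the morphism $\Spec L \to \disp_n$ already lifts to $\Spec L \to X_n^{h,d}$ over $L$ itself, reducing the existence-of-point question in $F$ to a concrete scheme-theoretic fibre over $L$. The $p$-power degree claim can then be extracted from an explicit Witt-vector calculation on this fibre, using that all the Frobenius-linear equations defining lifts of $\mathcal{P}$ to a BT group introduce only $p$-power denominators.
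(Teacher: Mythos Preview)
Your overall shape---recognise that the fibre $F$ is a gerbe over $L$ banded by an infinitesimal commutative group, then neutralise it over a controlled extension---is the right one, and matches the paper's framing. But there are genuine gaps in the execution, and you are missing the key simplifying idea.

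First, two local problems. The group scheme $\underline{\Aut}(G)$ for a truncated BT group $G$ is \emph{not} finite in general (it is smooth affine of typically positive dimension), so your argument for finiteness of $A_{\bar L}$ via ``closed subgroup of a finite group scheme'' does not work. Finiteness and infinitesimality of the kernel $\underline{\Aut}^o(G)=\ker(\underline{\Aut}(G)\to\underline{\Aut}(\phi_n(G)))$ are instead consequences of the fact that the relative diagonal of $\phi_n$ is finite and radicial, which is part of \cite[Theorem~4.7]{La}; commutativity likewise comes from that theorem, not from the vague ``additive subfunctor of a Hom-module'' remark. Second, your d\'evissage step asserts that classes in $H^2_{\mathrm{fppf}}(L,\mu_p)=\mathrm{Br}(L)[p]$ are split by degree-$p$ extensions; this is false in general (period need not equal index), and over a non-closed $L$ the simple subquotients of $A_L$ need not literally be $\alpha_p$ or $\mu_p$. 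One can probably repair this to get $p$-power degree, but it is delicate.

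The main point you are missing is that the whole cohomological d\'evissage is unnecessary. The paper's trick is to pass not to $\bar L$ but to the \emph{perfect closure} $L^{p^{-\infty}}$. Over any perfect field, classical Dieudonn\'e theory (equivalently, the results of \cite{La}) already guarantees that every truncated display lifts to a truncated BT group, so the gerbe $F$ is neutral over $L^{p^{-\infty}}$. Since $BT_n$ is locally of finite presentation, $F(L^{p^{-\infty}})=\varinjlim_{L'} F(L')$ over finite subextensions $L\subset L'\subset L^{p^{-\infty}}$; each such $L'$ is purely inseparable over $L$, hence automatically of $p$-power degree. This gives the required $L'$ in one stroke, with no appeal to $H^2$ or to splitting Brauer classes.
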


\begin{proof}
The diagonal $\Delta \colon BT_n \to BT_n \times_{\disp_n} BT_n$ is flat and surjective by \cite[Theorem 4.7]{La}. This means that two Barsotti-Tate groups
of level $n$ having the same associated display become isomorphic when pulled back to a suitable fppf-covering. It follows that the fiber 
$(BT_n)_L$ of a display $P$ over some field $L$ is a gerbe over $L$. If $L$ is perfect there is a truncated Barsotti-Tate group $G$ over $L$ with $\phi_n(G)=P$, i.e.
$(BT_n)_L$ is a neutral gerbe. In this case $(BT_n)_L=B \underline{\Aut}^o(G)$ where $\underline{\Aut}^o(G)=\Ker(\underline{\Aut}G \to \underline{\Aut}P)$ is
commutative and infinitesimal again by \cite[Theorem 4.7]{La}. If $L$ is not perfect we may consider the perfect hull $L^{p^{-\infty}}$ in an algebraic closure of $L$. 
Then $L \subset L^{p^{-\infty}}$ is purely inseparable and $(BT_n)_L(L^{p^{-\infty}})$ is non-empty. Since $(BT_n)_L(L^{p^{-\infty}})=\varinjlim_{L'}(BT_n)_L(L')$, where
the limit goes over all finite subextensions $L \subset L' \subset L^{p^{-\infty}}$, we find some $L'$ such that $(BT_n)_{L'}$ has a section corresponding to a truncated
Barsotti-Tate group $G$ over $L'$. Thus $A=\underline{\Aut}^o(G)$ and $L'$ have the desired properties.
\end{proof}

\begin{Remark}
 Over the open and closed substack of $BT_n$ consisting of level-n BT-groups of constant dimension $d$ and codimension $c$ the degree of
$\Aut^o(G^{univ})$ is $p^{ncd}$. See Remark 4.8 in \cite{La}.
\end{Remark}

Note that $\disp^{h,d}_n$ and $BT^{h,d}_n$ both admit admissible presentations in the sense of Definition \ref{DefAdm}.
In the case of $\disp^{h,d}_n$ this follows from Theorem \ref{ThRepDisp} and Lemma \ref{LeAdm}. To obtain the assertion for $BT^{h,d}_n$ we use \cite[Proposition 1.8]{We}
which yields a presentation $BT^h_n=[Y^h_n/\GL_{p^{nh}}]$ with $Y^h_n$ quasi-affine and of finite type 
over $k$. Now $BT^h_n$ is smooth over $\Spec k$ (\cite{La}). Hence $Y^h_n$ is also smooth and in particular normal and equidimensional. 

We now consider the flat pull-back map  $$\phi_n^* \colon A_*(\disp_n^{h,d},m) \to A_*(BT^{h,d}_n,m)$$ from Lemma \ref{Lepull-back}.

\begin{Proposition}
 $\phi_n^* \colon A_*(\disp_n^{h,d},m) \to A_*(BT^{h,d}_n,m)$ is an isomorphism after inverting $p$.
\end{Proposition}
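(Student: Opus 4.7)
The plan is to realize $\phi_n^*$ as the flat pull-back along an explicit map of quasi-projective schemes via Lemma~\ref{Lepull-back}, and then to verify the fiberwise hypothesis of Lemma~\ref{LeFiber} after inverting $p$. Fix admissible presentations $\disp^{h,d}_n = [X_n^{h,d}/G_n^{h,d}]$ and $BT^{h,d}_n = [Y/H]$ with $H = \GL_{p^{nh}}$ and $Y$ the smooth quasi-projective scheme supplied by \cite{We}. Choose admissible pairs $(V_1,U_1)$ for the $H$-action on $Y$ and $(V_2,U_2)$ for the $G_n^{h,d}$-action on $X_n^{h,d}$ with $\codim(U_i^c, V_i)$ arbitrarily large, and form the mixed spaces $M^{BT} = (Y \times U_1)/H$ and $M^{\disp} = (X_n^{h,d} \times U_2)/G_n^{h,d}$. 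Setting $Z' := M^{BT} \times_{\disp^{h,d}_n} M^{\disp}$, Lemma~\ref{Lepull-back} identifies $\phi_n^*$ with the flat pull-back along $Z' \to M^{\disp}$, of relative dimension $l_1 = \dim V_1$.

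By Lemma~\ref{LeFiber} it suffices to verify that for each $y \in M^{\disp}$ with residue field $\kappa = k(y)$, the map
$$A_{*}(\Spec\kappa, m)_p \to A_{* + l_1}(Z'_y, m)_p$$
is an isomorphism. Unwinding the definitions, $Z'_y = ((Y \times_{\disp^{h,d}_n} \Spec\kappa) \times U_1)/H$, which is precisely a mixed space, with respect to $(V_1,U_1)$, for the fiber gerbe $\mathscr{F}_y := \phi_n^{-1}(\Spec\kappa)$.

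By the preceding proposition there exists a finite field extension $L'/\kappa$ of $p$-power degree $d$ such that $\mathscr{F}_y \otimes_\kappa L' \simeq BA$ for some infinitesimal commutative group scheme $A$ over $L'$, necessarily of $p$-power order $p^e$. Applying Lemma~\ref{LeCov} to the finite flat surjection $Z'_y \otimes_\kappa L' \to Z'_y$ of $p$-power degree $d$ reduces the claim to the case $\kappa = L'$ and $\mathscr{F}_y = BA$. Pulling the $A$-torsor $\Spec L' \to BA$ back to the mixed space $Z'_y$ yields a finite flat cover $U_1 \otimes_k L' \to Z'_y$ of degree $p^e$ (the identification of the total space uses that $H$ is special, so the $H$-torsor trivialization of the mixed space is globally trivial over $\Spec L'$). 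By Corollary~\ref{CorAffineFibers} and the localization sequence, which kills the complement $U_1^c$ since $\codim(U_1^c, V_1)$ is sufficiently large, one has $A_{*}(U_1 \otimes_k L', m) \cong A_{*-l_1}(\Spec L', m)$ in the relevant degrees, and a second application of Lemma~\ref{LeCov} with $d' = p^e$ produces the required isomorphism.

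The main obstacle is the scheme-theoretic bookkeeping: one must carefully identify $Z'_y$ with a mixed space for $\mathscr{F}_y$, and then, after reduction to the case $\mathscr{F}_y = BA$, verify that the $A$-torsor cover of the mixed space really is finite flat of degree $|A| = p^e$ so that Lemma~\ref{LeCov} is applicable, even though $A$ is infinitesimal and therefore outside the paper's strict notion of an algebraic group. Once these identifications are in place, the proposition follows by assembling Lemmas~\ref{Lepull-back}, \ref{LeFiber}, \ref{LeCov} and Corollary~\ref{CorAffineFibers} with the preceding fiber proposition.
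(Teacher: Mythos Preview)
Your proposal is correct and follows essentially the same route as the paper: reduce via Lemma~\ref{Lepull-back} to a flat map of quasi-projective approximations, apply Lemma~\ref{LeFiber} to reduce to fibers, invoke the preceding proposition to identify the fiber (after a $p$-power field extension handled by Lemma~\ref{LeCov}) with an approximation of $BA$ for infinitesimal $A$, and finish with a push--pull argument along the $A$-torsor $U_1\otimes L'\to Z'_y$. One small point: your ``second application of Lemma~\ref{LeCov}'' is not a literal instance of that lemma, since the finite flat cover $U_1\otimes L'\to Z'_y$ is not the base change of a cover of $\Spec L'$; what you actually need (and what the paper also uses, equally tersely) is the direct push--pull argument that $\pi_*\pi^{*}=p^e\cdot\id$ for the degree-$p^e$ torsor $\pi$, combined with the isomorphism $A_*(\Spec L',m)\cong A_*(U_1\otimes L',m)$ in the relevant range.
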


\begin{proof}
Let us write $\mathscr{X}=BT^{h,d}_n$ and $\mathscr{Y}=\disp_n^{h,d}$. We fix some $i_o \in \Z$ and show that $\phi_n \colon A_{i_o}(\disp_n^{h,d},m)_p \to A_{i_o}(BT^{h,d}_n,m)_p$ is an isomorphism. 

Consider an approximation of $\mathscr{Y}$ (cf. Convention \ref{ConMixedSpace}) by a quasi-projective scheme $Y \to \mathscr{Y}$ so that
$A_{i_o}(\mathscr{Y},m)=A_{i_o}(Y,m)$ and similary an approximation $X \to \mathscr{X}$ of $\mathscr{X}$. Let $r$ denote the relative dimension 
of $X \to \mathscr{X}$. Let $Z$ be the fibre product $X \times_{\mathscr{Y}} Y$. The morphism $Z \to Y$ is then smooth of relative dimension $r$ and
we need to see that the pull-back $A_{i_o}(Y,m)_p \to A_{i_o+r}(Z,m)_p$ is an isomorphism. 
Note that $Z$ is again quasi-projective since it is open in a vector bundle over the quasi-projective scheme $X$ (cf. Remark \ref{RemAdm}). We have the following cartesian diagram
$$
\xymatrix{
Z_y \ar[d] \ar[r] & \mathscr{X}_{k(y)} \ar[d] \ar[r] & \Spec k(y) \ar[d] \\
Z \ar[r] \ar[d] & \mathscr{X}_Y \ar[d] \ar[r] & Y \ar[d] \\
X \ar[r] & \mathscr{X} \ar[r] & \mathscr{Y}
}
$$
By Lemma \ref{LeFiber} it suffices to see that $A_i(\Spec k(y),m)_p \to A_{i+r}(Z_y,m)_p$ with $i=i_o-\dim \bar{ \{y\} }$ is an isomorphism. 
According to the previous proposition there is a finite field extension $K$ of $k(y)$ of $p$-power degree such that $\mathscr{X}_K=BA$ holds for an infinitesimal
group scheme $A$ over $K$. 

Since $Z_K$  is open in a vector bundle over $\mathscr{X}_K$ of rank $r$ we have $Z_K=U/A$, where $U$ is open in a representation 
$V$ of $A$. Note that $V$ is of dimension $r$. 
Hence by chosing $\codim X^c$ to be big enough, we may assume $A_i(\Spec K,m) \to A_{i+r}(U,m)$ is an isomorphism. Since $A$ is of $p$-power
degree it follows that the map $A_i(\Spec K,m)_p \to A_{i+r}(Z_K,m)_p$ is an isomorphism . Now since the field extension $K \supset k(y)$ is of 
$p$-power degree it follows from Lemma \ref{LeCov} that $A_i(\Spec k(y),m)_p \to A_{i+r}(Z_y,m)_p$ is also an isomorphism. We are done.
\end{proof}

\begin{proof}(of Theorem \ref{ThChowBT_n})
Since $BT_n$ and $\disp_n$ are smooth the pull-back $(\phi_n)_p^* \colon A^*(\disp_n)_p \to A^*(BT_n)_p$ is an isomorphism by Lemma \ref{Lepull-back} and the proposition above. We already know 
$A^*(\disp_n)$ is $p$-torsion free by Theorem \ref{ThSt} and Theorem \ref{Thhigherdisp}. Thus $\phi_n^*$ is injective. 
\end{proof}

 Gathering the results of Chapter 4 we obtain
 
\begin{Theorem}
\begin{equivlist}
\item We have 
$$A^*(BT^{h,d}_n)_{p}=\mathbb{Z}[p^{-1}][t_1,\ldots,t_h]^{S_d \times S_{h-d}}/((p-1)c_1,\ldots,(p^h -1)c_h),$$ 
where $c_i$ denotes the $i$-th elementary symmetric polynomial in the variables
 $t_1,\ldots,t_h$ and $t_1,\ldots,t_d$ resp.\ $t_{d+1},\ldots,t_h$ are the Chern roots of $\mathcal{L}ie$ resp.\ ${}^t\mathcal{L}ie^{\vee}$.
\item $\dim_{\mathbb{Q}}A^*(BT^{h,d}_n)_{\mathbb{Q}}=\binom{h}{d}$ and a basis is given by the cyclces of the closures of the $EO$-Strata.
\item $$  (\Pic BT^{h,d}_n)_p= \begin{cases} \mathbb{Z}[p^{-1}]/(p-1) & \mbox{if } d=0,h \\
                                          \mathbb{Z}[p^{-1}] \times \mathbb{Z}[p^{-1}]/(p-1) &\mbox{else,}
                            \end{cases}$$
where the generator for the free resp.\ torsion part is $\det(\mathcal{L}ie)$ resp.\ $\det(\mathcal{L}ie \otimes {}^t\mathcal{L}ie^{\vee})$.
\end{equivlist}
\end{Theorem}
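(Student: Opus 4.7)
The plan is to gather the reductions already proved. For part (i), I would chain the three main isomorphisms established in the paper. By Theorem \ref{ThChowBT_n}, the pull-back $\phi_n^*$ induces an isomorphism $A^*(\disp_n^{h,d})_p \cong A^*(BT_n^{h,d})_p$. By Theorem \ref{Thhigherdisp}, the truncation $\tau_n$ induces an isomorphism $A^*(\disp_1^{h,d}) \cong A^*(\disp_n^{h,d})$, which stays an isomorphism after inverting $p$. Finally, Theorem \ref{ThSt} gives the explicit presentation of $A^*(\disp_1^{h,d})$. The identification of the Chern roots with those of $\mathcal{L}ie$ and ${}^t\mathcal{L}ie^{\vee}$ on $BT_n^{h,d}$ follows from the fact that both bundles are defined naturally on $\disp_n$ (as in Definition \ref{DefVB}), and the construction of $\phi_n$ sends a Barsotti--Tate group $G$ to a display whose Lie algebra coincides with $\Lie(G)$; hence the pull-backs via $\tau_n^*$ and $\phi_n^*$ of the bundles on $\disp_1^{h,d}$ are exactly the bundles named $\mathcal{L}ie$ and ${}^t\mathcal{L}ie^{\vee}$ on $BT_n^{h,d}$.

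For part (ii), tensoring part (i) with $\mathbb{Q}$ kills the torsion in the relations $(p^i-1)c_i$, so the dimension of $A^*(BT_n^{h,d})_{\mathbb{Q}}$ equals that of $A^*(\disp_1^{h,d})_{\mathbb{Q}}$. By Lemma \ref{LeDispzip}, $\disp_1^{h,d} \cong \fzip^{\tau}$ for the type $\tau$ with $\tau(0)=h-d$, $\tau(1)=d$, and Corollary \ref{CorF-Zip}(iii) gives the dimension $h!/((h-d)!\,d!) = \binom{h}{d}$. For the basis statement, by Theorem \ref{ThBasis} the closures of the $E_{\mathcal{Z}}$-orbits form a $\mathbb{Q}$-basis of $A^*(\disp_1^{h,d})_{\mathbb{Q}}$. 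One then has to identify these with the EO-strata closures on $BT_n^{h,d}$: since $\phi_n$ is an equivalence on geometric points and $\tau_n$ an equivalence on the truncation-$1$ level on geometric points (at least up to further truncation), the EO-strata on $BT_n^{h,d}$ correspond bijectively to the $G$-zip orbits, and flat pull-back of cycles sends orbit-closures to EO-stratum closures.

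For part (iii), one uses $\Pic(BT_n^{h,d}) = A^1(BT_n^{h,d})$ (which holds for smooth quotient stacks, cf. the remark after Lemma \ref{Lepull-back} together with \cite[Corollary 1]{EG2}). Reading off the degree-$1$ part of the presentation in (i): after inverting $p$, $A^1$ is the free $\mathbb{Z}[p^{-1}]$-module on $c_1^{(1,h-d)} = c_1({}^t\mathcal{L}ie^{\vee})$ and $c_1^{(h-d+1,h)} = c_1(\mathcal{L}ie)$ modulo the single relation $(p-1)(c_1^{(1,h-d)} + c_1^{(h-d+1,h)}) = 0$. If $d=0$ or $d=h$ only one generator is present, giving $\mathbb{Z}[p^{-1}]/(p-1)$. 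Otherwise, changing basis to $\det(\mathcal{L}ie) = c_1^{(h-d+1,h)}$ (free generator) and $\det(\mathcal{L}ie \otimes {}^t\mathcal{L}ie^{\vee}) = c_1^{(1,h-d)} + c_1^{(h-d+1,h)}$ (torsion generator killed by $p-1$) yields the asserted decomposition.

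The main obstacle I expect is the compatibility in part (ii) between the EO-stratification of $BT_n^{h,d}$ and the $E_{\mathcal{Z}}$-orbit stratification of the $G$-zip stack, since the paper has not explicitly constructed this correspondence earlier. One needs to verify that the composition of $\phi_n$ with $\tau_n$ induces a bijection on geometric isomorphism classes compatible with the stratifications, and that the resulting flat pull-backs of orbit closures are the EO-stratum closures with multiplicity one. Once this dictionary is set up, parts (i) and (iii) are essentially formal bookkeeping, and part (ii) becomes a direct transport of Theorem \ref{ThBasis} and Corollary \ref{CorF-Zip}.
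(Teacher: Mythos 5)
Your proposal follows the paper's own proof essentially verbatim: part (i) by chaining Theorem \ref{ThChowBT_n}, Theorem \ref{Thhigherdisp} and Theorem \ref{ThSt}, part (ii) via Lemma \ref{LeDispzip}, Lemma \ref{LeG-Zip}, Corollary \ref{CorF-Zip} and Theorem \ref{ThBasis}, and part (iii) by reading off $A^1$ using $\Pic = A^1$. The compatibility between EO-strata and $E_{\mathcal{Z}}$-orbit closures that you flag as the main obstacle is likewise left implicit in the paper, so your argument is in line with (and if anything slightly more careful than) the paper's proof.
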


\begin{proof}
 By Theorem \ref{ThChowBT_n} we know $A^*(\disp_n^{h,d})_p \cong A^*(BT^{h,d}_n)_p$. Morever, we have $A^*(\disp_n^{h,d}) \cong A^*(\disp^{h,d}_1)$ by Theorem \ref{Thhigherdisp} and
 $A^*(\disp^{h,d}_1)$ was computed in Theorem \ref{ThSt}. This proves part (i).
 By Lemma \ref{LeDispzip} and Lemma \ref{LeG-Zip} we know that $\disp^{h,d}_1$ is isomorphic to the stack $[\GL_h/E_{\mathcal{Z}}]$ corresponding to the Frobenius zip datum
 $\mathcal{Z}=(\GL_h,P,P^-,\sigma)$, where $P$ is the standard parabolic of type $(d,h)$, $P^-$ is the opposite parabolic and $\sigma$ is the Frobenius isogeny.
 Now the dimension of $A^*(\disp^{h,d}_1)_{\Q}$ as a $\Q$-vectorspace follows from Corollary \ref{CorF-Zip} and a basis is given by Theorem \ref{ThBasis}. This proves (ii).
 Finally (iii) follows from (i) together with the fact that $\Pic BT^{h,d}_n=A^1(BT^{h,d}_n)$.
\end{proof}

University of Paderborn, D-33098 Paderborn 

Dennis.Brokemper@math.uni-paderborn.de 

\end{document}